%final version
\documentclass[11pt]{amsart}      
\setlength{\hoffset}{-0.9in}      
\setlength{\voffset}{-0.8in}      
\setlength{\textwidth}{165mm}      
\setlength{\textheight}{250mm}        
         
\usepackage{amssymb}      
\usepackage{amsmath}      
\usepackage{amsthm}      
\usepackage{amsbsy}      
\usepackage{amscd}      
\usepackage[dvips]{graphicx}

\theoremstyle{plain}      
\newtheorem{theorem}{Theorem}[section]      
\newtheorem{lemma}{Lemma}[section]      
\newtheorem{corollary}[theorem]{Corollary}      
\newtheorem{proposition}{Proposition}[section]

\newtheorem{definition}{Definition}[section]          
\theoremstyle{remark}      
\newtheorem{remark}{Remark}[section]

\newcommand{\Q}{{\mathbb{Q}}}        
\newcommand{\Z}{{\mathbb{Z}}}   
   
\newcommand{\C}{{\mathbb{C}}}      
\newcommand{\R}{{\mathbb{R}}}  

\renewcommand{\S}{{\mathcal{S}}}  

\newcommand{\Aut}{{\rm{Aut}^+}}  
    \newcommand{\Out}{{\rm{Out}^+}}

\newcommand{\ro}{{\widetilde{\rho}}}

\begin{document}

\date{\today}

\title[Profinite Burnside-type groups]{Profinite completions of Burnside-type quotients  of surface groups}

\author[L.Funar]{Louis Funar}
\address{Institut Fourier,
Laboratoire de Mathematiques UMR 5582,   
Universit\'e Grenoble Alpes,
CS 40700, 
38058 Grenoble, France}
\email{louis.funar@univ-grenoble-alpes.fr} 
\author[P.Lochak]{Pierre Lochak}
\address{Centre de Math\'ematiques de Jussieu, 
Universit\'e Paris Pierre et Marie Curie  
4, place Jussieu,  
F-75252 Paris cedex 05, France}
\email{pierre.lochak@imj-prg.fr}

\maketitle 

\begin{abstract} 
Using quantum representations of mapping class groups we prove that profinite completions of Burnside-type surface group quotients are not virtually prosolvable, in general. Further, we construct infinitely many finite simple 
characteristic quotients of surface groups.

\vspace{0.2cm}
\noindent 2000 MSC Classification: 57 M 07, 20 F 36, 20 G 20, 22 E 40.  
 
\noindent Keywords:  Mapping class group,  Burnside group, 
quantum representation,  discrete subgroup of 
semi-simple Lie groups, congruence quotient, characteristic subgroup.
\end{abstract}

\section{Introduction and statements}

Let $\pi_g$ denote the fundamental group $\pi_1(S_g,p)$  of a closed orientable surface $S_g$ of genus $g$, based at a point $p\in S_g$. Recall that $\pi_g$ is a one-relator group with the presentation:
\[ \pi_g=\langle a_1,a_2,\ldots,a_g,b_1,b_2,\ldots,b_g | \;  [a_1,b_1]\cdots [a_2,b_2]\cdots [a_g,b_g]=1\rangle \]
Here the classes  $a_i,b_i$ are represented by non-separating simple closed loops on $S_g$ based at $p$.

We denote by $\Gamma_g$ the mapping class group of $S_g$. Further 
$\Gamma_g^1$ denotes  the mapping class group of the pair $(S_g,p)$, namely the group of isotopy classes of  orientation preserving homeomorphisms of $S_g$ fixing $p$. It is well-known that $\Gamma_g^1$ is isomorphic to the mapping class group of the punctured surface $S_g-\{p\}$. 
 By forgetting the marked point $p$ one obtains a 
surjective homomorphism $\Gamma_g^1\to \Gamma_g$ which is part of the Birman exact sequence: 
\[ 1\to \pi_g \to \Gamma_g^1\to \Gamma_g\to 1\]
 
The Dehn-Nielsen-Baer theorem states that 
the map associating to $\varphi\in \Gamma_g^1$ the automorphism $\varphi_*:\pi_1(S_g,p)\to \pi_1(S_g,p)$
provides an  isomorphism between $\Gamma_g^1$ and $\Aut(\pi_g)$ and induces an isomorphism 
$\Gamma_g\to \Out(\pi_g)$.
Furthermore, the diagram below is commutative: 
\[\begin{array}{ccccccccc}
 1& \to & \pi_g & \to & \Gamma_g^1&\to & \Gamma_g &\to& 1\\
  &  & \downarrow & & \downarrow & & \downarrow & & \\
 1 & \to & \pi_g & \to & \Aut(\pi_g) & \to &\Out(\pi_g) & \to & 1 \\
 \end{array}
 \] 
where the top horizontal line is the Birman exact sequence. 
 
If $M\subset \pi_g$ we denote by $M[n]$ the normal subgroup of $\pi_g$ generated by 
$\varphi_*(x^n)$, for all $x\in M$ and $\varphi_*\in\Aut(\pi_g)$. 
Note that $M[n]$ is the characteristic subgroup generated by the subset $M^n$ of $n$-th powers of 
elements in $M$.  

The {\em  Burnside-type group}  $B(\pi_g,n,M)$ is the quotient $\pi_g/M[n]$. 
Several choices for $M$ are particularly interesting.  An element $x\in \pi_g$ is called {\em primitive}  if it can be represented 
by a non-separating simple closed curve on $S_g$.  This is equivalent to saying (see \cite{Z}) that $x\in \pi_g$ 
can be mapped into one generator, say $a_1$, by some  automorphism $\varphi_*\in \Aut(\pi_g)$, where $a_1,\ldots,a_g,b_1,\ldots,b_g$ are the generators from the standard presentation above. 

The set of primitive classes of $\pi_g$ is then contained in the set $\S(S_g)$ of homotopy classes of 
simple closed curves on $S_g$. 
More generally, we set  $\S_n(S_g)$ for  the set of homotopy classes of closed curves on $S_g$ with at most 
$n$ self-intersections.

We denote by $\widehat{G}$ the profinite completion of a group $G$. 
We are concerned in this paper with how large the profinite completion of $B(\pi_g,n, M)$ could be.  
Our first result is: 

\begin{theorem}\label{prosolvable}
Let $g\geq 2$ and  $p\equiv 3 ({\rm mod}\; 4)$ a large enough prime. 
Then for every $m$ there exists some $d$ such that 
the group $\widehat{B(\pi_g,dp,M)}$ is not virtually prosolvable, if $M\subset\S_m(S_g)$. 
When $m=1$ then $d=1$. 
\end{theorem}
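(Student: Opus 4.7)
The plan is to exhibit, for each $m$ and an appropriate $d=d(m,g,p)$, a surjection from $B(\pi_g,dp,M)$ onto an arithmetic linear group admitting infinitely many pairwise non-isomorphic non-abelian finite simple congruence quotients; such quotients force every open subgroup of $\widehat{B(\pi_g,dp,M)}$ to admit non-solvable quotients, ruling out virtual prosolvability. The main tool is the $SO(3)$ quantum representation at level $p$, regarded as a projective representation $\rho_p\colon\Gamma_g^1\to PU(V_{g,p})$, where $V_{g,p}$ is the Blanchet--Habegger--Masbaum--Vogel TQFT space for the once-marked surface. The integrality theorem for this TQFT, together with $p\equiv 3\pmod 4$, places the image in $PU(V_{g,p},\mathcal{O}_p)$ for an $S$-integer subring $\mathcal{O}_p$ of $\Q(\zeta_p)$, and forces every projective Dehn twist $\rho_p(T_\gamma)$ to have order exactly $p$. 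Restriction to $\pi_g$ along the Birman embedding $\pi_g\hookrightarrow\Gamma_g^1$ yields $\overline{\rho}_p\colon\pi_g\to PU(V_{g,p},\mathcal{O}_p)$.

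For $m=1$, a primitive $x\in\pi_g$ is represented by a simple closed curve $\gamma$ and the associated point-pushing class is the product $T_{\gamma^+}T_{\gamma^-}^{-1}$ of the two Dehn twists along the disjoint push-offs of $\gamma$; they commute and each has projective order $p$, so $\overline{\rho}_p(x^p)=1$. The conjugation action of $\Gamma_g^1=\Aut(\pi_g)$ on the normal subgroup $\pi_g$ recovers the $\Aut(\pi_g)$-action on itself, so $\overline{\rho}_p(\varphi_*(x^p))=\rho_p(\varphi_*)\,\overline{\rho}_p(x^p)\,\rho_p(\varphi_*)^{-1}=1$ for every $\varphi_*\in\Aut(\pi_g)$. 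Hence the entire characteristic subgroup $M[p]$ lies in $\ker\overline{\rho}_p$ when $M\subset\S_1(S_g)$, giving the factorisation through $B(\pi_g,p,M)$ with $d=1$.

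I expect the main obstacle to be the uniform analogue for $m\geq 2$: showing that for every $\gamma\in\S_m(S_g)$ the projective order of $\overline{\rho}_p(\gamma)$ divides a fixed $dp$ independent of $\gamma$. The natural attack is to decompose the point-pushing along $\gamma$ as a product of a bounded number (in terms of $m$) of Dehn twists along simple closed curves supported in a compact subsurface whose topological complexity is controlled by $m$; this should constrain $\overline{\rho}_p(\gamma)$ to a finite subgroup of $PU(V_{g,p},\mathcal{O}_p)$ whose exponent depends only on $m,g,p$, and allow a choice of $d$ with $\overline{\rho}_p(\gamma^{dp})=1$. Once this is in hand, the conjugation argument from the $m=1$ case again gives a factorisation $\overline{\rho}_p\colon B(\pi_g,dp,M)\to PU(V_{g,p},\mathcal{O}_p)$.

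The remaining step converts Zariski density of $\overline{\rho}_p(\pi_g)$ into an infinite supply of non-solvable finite quotients. For $p$ sufficiently large, density results of Masbaum--Reid and Larsen--Wang type guarantee that $\overline{\rho}_p(\pi_g)$ is Zariski dense in the semisimple $\Q(\zeta_p)$-algebraic group $\mathbf{G}_p$ arising as the Zariski closure of $\rho_p(\Gamma_g^1)$. Nori--Weisfeiler strong approximation then implies that, for almost all prime ideals $\mathfrak{q}\subset\mathcal{O}_p$, the composition of $\overline{\rho}_p$ with reduction modulo $\mathfrak{q}$ remains surjective onto $\mathbf{G}_p(\mathcal{O}_p/\mathfrak{q})$, whose composition factors include non-abelian finite simple groups of unbounded order. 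These produce an infinite family of pairwise non-isomorphic non-abelian simple quotients of $B(\pi_g,dp,M)$; no open subgroup of $\widehat{B(\pi_g,dp,M)}$ can avoid cofinitely many of them, so $\widehat{B(\pi_g,dp,M)}$ cannot be virtually prosolvable.
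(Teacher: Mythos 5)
Your treatment of the case $m=1$ is essentially the paper's argument: the point-pushing image of a simple loop is a product of two commuting Dehn twists whose projective images have order dividing $p$, the kernel of the restriction to $\pi_g$ is $\Aut(\pi_g)$-invariant because the representation extends to $\Gamma_g^1$, and Zariski density plus Nori--Weisfeiler strong approximation produces infinitely many pairwise non-isomorphic non-abelian finite simple congruence quotients, which rules out virtual prosolvability. (Be aware that the density of $\ro_{p,(p-3)}(\widetilde{\pi_g})$ is not an off-the-shelf consequence of Larsen--Wang or Masbaum--Reid: the paper has to extend the Larsen--Wang argument to the once-marked surface with boundary color $p-3$, use normality of $\pi_g$ in $\Gamma_g^1$ together with the Koberda--Santharoubane infiniteness result, and control the adjoint trace field; but this is a matter of filling in known-type arguments, not a change of route.)

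The genuine gap is the case $m\geq 2$, which you flag but attack in a way that cannot work. You propose to keep the same representation $\overline{\rho}_p$ of $\pi_g$ and show that every $\gamma\in\S_m(S_g)$ has projective image of order dividing a uniform $dp$, by writing the point-pushing along $\gamma$ as a bounded product of Dehn twists of bounded-complexity support. Writing an element as a bounded product of finite-order elements puts no bound on its order, and in fact no such uniform $d$ can exist for large $m$: the image $\overline{\rho}_p(\pi_g)$ is an infinite finitely generated linear group, hence contains elements of infinite order; these are images of non-simple loops, each lying in some $\S_m(S_g)$, so $\overline{\rho}_p(\gamma^{dp})\neq 1$ for such $\gamma$ no matter what $d$ is. (The paper's own remark that a proof with $d=1$ for all $m$ would settle Zelmanov's Problem 4' signals that this direct factorisation is precisely what is \emph{not} available.) The paper's route is different: by Scott's theorem one chooses a finite characteristic cover $f:S_h\to S_g$ of some degree $d$ such that the $d$-th powers of the finitely many homeomorphism-classes of loops in $\S_m(S_g)$ lift to \emph{simple} based loops in $S_h$; restriction of automorphisms gives an injective homomorphism $F:\Gamma_g^1\to\Gamma_h^1$ with $F(\S_m(S_g)[nd])\subset \S(S_h)[n]$, whence $B(\pi_g,nd,\S_m(S_g))$ has a finite-index subgroup surjecting onto $B(\pi_h,n,\S(S_h))$, and one applies the simple-curve case in genus $h$. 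Without this (or some comparable) passage to a cover, your argument establishes only the $m=1$ statement.
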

\begin{remark}
The result above also holds for large enough primes  $p\equiv 1({\rm mod \; 4})$, according to  Remark \ref{mod4}. 
An explicit $p_0$ such that the claim holds for all $p\geq p_0$ can be obtained from effective bounds in Lemma 
\ref{square}. Moreover, the claim holds for all primes $p < 10^4$, by a computer check of Lemma \ref{square}.    
\end{remark}

The proof shows that under these assumptions $\widehat{B(\pi_g,dp,M)}$ is neither solvable-by-finite nor finite-by-solvable. Our notational convention is that a finite-by-solvable group is an extension 
of a finite group by some solvable group, also 
called a virtually solvable group in the litterature.  

Zelmanov \cite{Z} considered the group $\widehat{\pi_g}/\langle M[n]\rangle$, where 
$\langle M^n\rangle$ is the closure in $\widehat{\pi_g}$ of the normal subgroup of $\widehat{\pi_g}$ generated by 
$M^n$. Problem 2 from \cite{Z} asked whether this group is solvable-by-finite, when $M$ denotes the set of primitive elements of $\pi_g$. The result above shows that 
this is not the case, in general: 

\begin{corollary}
Let $g\geq 2$ and  $p\equiv 3 ({\rm mod}\; 4)$, a large enough   prime.
The group $\widehat{\pi_g}/\langle M[n]\rangle$, where $M=\S(S_g)$ contains the set of primitive elements of $\pi_g$, is 
not virtually solvable. 
\end{corollary}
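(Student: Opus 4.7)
The plan is to deduce this directly from Theorem \ref{prosolvable} applied with $m=1$. I would take $M=\S(S_g)$, the set of homotopy classes of simple closed curves: such curves have no self-intersection, hence $\S(S_g)\subset \S_1(S_g)$, and by the characterisation recalled just before the theorem every primitive element of $\pi_g$ lies in $\S(S_g)$. Since the value $m=1$ forces $d=1$, the theorem asserts that for every sufficiently large prime $p\equiv 3\pmod{4}$ the profinite completion $\widehat{B(\pi_g,p,M)}$ is not virtually prosolvable; in particular it is not virtually solvable.

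The next step would be to identify this completion with the group appearing in the statement, namely to show $\widehat{B(\pi_g,p,M)}=\widehat{\pi_g}/\langle M[p]\rangle$. This follows from the standard equality $\widehat{G/N}=\widehat{G}/\overline{N}$, valid for any discrete group $G$ and any normal subgroup $N\subset G$, where $\overline{N}$ denotes the closure of the image of $N$ in $\widehat{G}$; both sides have the same inverse system of finite quotients, namely those of $G$ that kill $N$. Applied with $G=\pi_g$ and $N=M[p]$, this gives $\widehat{B(\pi_g,p,M)}=\widehat{\pi_g}/\overline{M[p]}$. Because $M=\S(S_g)$ is preserved by $\Aut(\pi_g)$ (surface homeomorphisms preserve simplicity of curves), the characteristic closure $M[p]$ coincides with the ordinary normal closure of $M^p$ in $\pi_g$, whence $\overline{M[p]}$ is precisely the closed normal subgroup $\langle M[p]\rangle=\langle M^p\rangle$ used by Zelmanov.

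Combining the two steps, $\widehat{\pi_g}/\langle M[p]\rangle$ inherits the failure of virtual prosolvability from $\widehat{B(\pi_g,p,M)}$ and is therefore not virtually solvable, which is the asserted claim (with $n=p$). I do not expect any real obstacle here: essentially all the work is packaged in Theorem \ref{prosolvable}, and what remains is the formal comparison between the Burnside-type discrete quotient together with its profinite completion on one side, and Zelmanov's profinite construction on the other. The only point to be careful about is the $\Aut(\pi_g)$-invariance of $\S(S_g)$, which is exactly what permits the identification of the characteristic closure with the plain normal closure and hence of the two profinite quotients.
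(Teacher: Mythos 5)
Your proposal is correct and takes essentially the same route as the paper: both reduce the corollary to Theorem \ref{prosolvable} with $m=1$ (so $d=1$) by comparing Zelmanov's group $\widehat{\pi_g}/\langle M[n]\rangle$ with $\widehat{B(\pi_g,p,M)}$. The only difference is cosmetic: the paper just uses the continuous surjection $\widehat{\pi_g}/\langle M[n]\rangle \to \widehat{B(\pi_g,p,M)}$ induced by $\pi_g\to B(\pi_g,p,M)$, while you upgrade this to an identification via $\widehat{G/N}\cong \widehat{G}/\overline{N}$ together with the $\Aut(\pi_g)$-invariance of $\S(S_g)$ (which makes $M[p]$ the plain normal closure of $M^p$); either version suffices.
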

\begin{proof}
The surjective map $\pi_g\to  B(\pi_g,p,M)$ induces a surjective continuous homomorphism between the 
corresponding profinite completions $\widehat{\pi_g}\to  \widehat{B(\pi_g,p,M)}$. The kernel of the last 
map contains $M[n]$ and hence the closure $\langle M^n\rangle$ of the normal subgroup of $\widehat{\pi_g}$ generated by 
$M^n$. Therefore we have a surjective continuous map $\widehat{\pi_g}/\langle M[n]\rangle\to \widehat{B(\pi_g,p,M)}$. 
\end{proof}

Our method also provides a large supplies of  finite quotients for all intermediary subgroups:

\begin{corollary}\label{ubiquity}
Let $g\geq 2$ and $\Gamma$ be a group such that $\pi_g\subset \Gamma\subset \Gamma_g^1$. 
Then $\Gamma$ admits surjective homomorphisms onto infinitely many finite simple groups, 
for instance  $PSL(N,\mathbf F_q)$, where $N$ and $q$ are arbitrarily large. 

In particular, this holds  if $\Gamma$ is the fundamental group of a closed 3-manifold fibering over the circle 
with fiber a closed orientable surface of genus $g\geq 2$. 
\end{corollary}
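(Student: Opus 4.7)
My plan is to deduce the corollary from the same inputs that drive Theorem~\ref{prosolvable}, namely the $SO(3)$-quantum representations of $\Gamma_g^1$ combined with the Nori--Weisfeiler strong approximation theorem.

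For each odd prime $p$ one has a quantum representation $\ro_p \colon \Gamma_g^1 \to PGL(V_p)$ whose image, modulo its centre, lies in a subgroup of an almost-simple semi-simple algebraic group $G_p$ of type $A_{N_p-1}$ defined over a cyclotomic number field $K_p$, with $N_p \to \infty$ as $p \to \infty$. The crucial technical input, which should already be established in the proof of Theorem~\ref{prosolvable}, is that the restriction $\ro_p|_{\pi_g}$ already has Zariski-dense image in $G_p$. Granting this, for any $\Gamma$ with $\pi_g \subset \Gamma \subset \Gamma_g^1$ the subgroup $\ro_p(\Gamma)$ contains $\ro_p(\pi_g)$ and is therefore a fortiori Zariski dense in $G_p$.

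Next I apply Nori--Weisfeiler strong approximation to the finitely generated, Zariski-dense subgroup $\ro_p(\Gamma) \subset G_p(K_p)$, using that $\ro_p$ is defined over a ring of cyclotomic integers. This yields surjections of $\ro_p(\Gamma)$ onto the finite simple groups $G_p(\mathbf F_{q^r})/Z \cong PSL(N_p, \mathbf F_{q^r})$ for all but finitely many prime ideals of the ring of integers of $K_p$, where $q^r$ runs over the corresponding residue cardinalities. Composing with the restriction $\Gamma \to \ro_p(\Gamma)$ produces surjections $\Gamma \twoheadrightarrow PSL(N_p, \mathbf F_{q^r})$. Varying $p$ makes $N_p$ arbitrarily large, and for each fixed $p$ varying the residue prime makes the residue cardinality arbitrarily large, so infinitely many distinct simple quotients of the claimed form are obtained. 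The main obstacle is the Zariski density of $\ro_p(\pi_g)$ in the semi-simple $G_p$: controlling the image of the Birman-sequence subgroup, rather than just that of the whole mapping class group, requires analysing the spectrum of the point-pushing elements corresponding to non-separating simple loops based at $p$, and it is precisely there that the quantum invariants of the curves in $\S(S_g)$ enter the argument. Once this density is in hand, the rest is a standard application of strong approximation.

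For the final statement, if $M$ is a closed orientable $3$-manifold fibering over $S^1$ with fiber $S_g$ and monodromy $\phi \in \Gamma_g$, then $\pi_1(M) \cong \pi_g \rtimes_{\phi_*} \Z$, where $\phi_*$ is the induced automorphism of $\pi_g$. Choosing any lift $\widetilde{\phi} \in \Gamma_g^1 \cong \Aut(\pi_g)$ of $\phi$, the subgroup of $\Gamma_g^1$ generated by $\pi_g$ and $\widetilde{\phi}$ is isomorphic to $\pi_g \rtimes_{\phi_*} \Z \cong \pi_1(M)$, and visibly sits in the sandwich $\pi_g \subset \pi_1(M) \subset \Gamma_g^1$. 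The preceding argument then applies to $\Gamma = \pi_1(M)$.
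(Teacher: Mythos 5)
Your proposal is correct and follows essentially the same route as the paper: Zariski density of $\ro_{p,(p-3)}(\widetilde{\pi_g})$ (Theorem \ref{Zdensity}) together with Nori--Weisfeiler strong approximation (Proposition \ref{finitequotients}) produces the finite simple quotients of $\pi_g$, hence of any intermediate $\Gamma$, and the fibered $3$-manifold group embeds in $\Gamma_g^1$ exactly as you describe via the Birman exact sequence. Two small points of calibration: the residue-field groups are $PSL(N_{g,p},\mathbf F_q)$ only at the (still infinitely many) primes that split completely in $\Q(\zeta)$, the others giving unitary groups $PSU$, and the paper avoids any finite-generation concern about an arbitrary intermediate $\Gamma$ by applying strong approximation to $\rho_{p,(p-3)}(\pi_g)$ itself and then noting that $\rho_{p,(p-3)}(\pi_g)\subset\rho_{p,(p-3)}(\Gamma)\subset\rho_{p,(p-3)}(\Gamma_g^1)$ all have the same image mod $q^k$.
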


Recall that a subgroup $H\subset G$ is {\em characteristic} if it is invariant by the action of 
the group ${\rm Aut}(G)$ of automorphisms of $G$. Further, the quotient $Q$ of $G$ is a {\em characteristic quotient} 
of $G$ if there is a surjective homomorphism $p:G\to Q$ whose kernel $\ker p$ is a characteristic subgroup of $G$.   
A consequence of an intermediary result obtained in the proof of Theorem \ref{prosolvable} is the following: 

\begin{theorem}\label{Lubotzky}
For $g\geq 2$ there exist infinitely many finite simple characteristic quotients of $\pi_g$. 
\end{theorem}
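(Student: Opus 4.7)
The plan is to promote the construction used in the proof of Theorem~\ref{prosolvable} into explicit finite simple characteristic quotients of $\pi_g$. For each prime $p$ in the infinite family provided by that theorem, one has a quantum representation $\rho_p : \Gamma_g^1 \to G_p$, with $G_p$ a finite classical group, whose restriction to $\pi_g$ is sufficiently controlled.

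First I would record the basic equivariance principle. Since $\pi_g$ is normal in $\Gamma_g^1$ (Birman exact sequence) and $\Gamma_g^1 \cong \Aut(\pi_g)$ (Dehn--Nielsen--Baer), the conjugation action of $\Gamma_g^1$ on $\pi_g$ realizes every element of $\Aut(\pi_g)$. Hence the restriction $\rho_p|_{\pi_g} : \pi_g \to \rho_p(\pi_g)$ intertwines the $\Aut(\pi_g)$-action on $\pi_g$ with conjugation inside $G_p$ by $\rho_p(\Gamma_g^1)$. In particular, $\ker(\rho_p|_{\pi_g})$ is automatically a characteristic subgroup of $\pi_g$, and $\rho_p(\pi_g)$ is a finite characteristic quotient of $\pi_g$.

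Second, I would extract a simple characteristic quotient of $\pi_g$ from $\rho_p(\pi_g)$. The intermediate result feeding the proof of Theorem~\ref{prosolvable} exhibits, canonically inside $\rho_p(\pi_g)$, a classical finite simple quotient $S_p$ --- of the form $PSL(N_p, \mathbf{F}_{q_p})$ (or $PSp$, $PSU$) with $N_p, q_p$ growing unboundedly with $p$. Concretely, let $Z_p \trianglelefteq \rho_p(\pi_g)$ be a characteristic subgroup (i.e.\ invariant under $\Aut(\rho_p(\pi_g))$) with $\rho_p(\pi_g)/Z_p \cong S_p$; natural candidates are the solvable radical or, when $\rho_p(\pi_g)$ is quasi-simple, the center. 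Any such $Z_p$ is \emph{a fortiori} invariant under the conjugation action of $\rho_p(\Gamma_g^1)$, so the equivariance from the first step forces the preimage $K_p := (\rho_p|_{\pi_g})^{-1}(Z_p)$ to be characteristic in $\pi_g$, yielding $\pi_g/K_p \cong S_p$ as a non-abelian finite simple characteristic quotient. Letting $p$ range over the infinite family produces infinitely many pairwise non-isomorphic $S_p$ by order considerations, proving the theorem.

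The only non-formal step is the second one: exhibiting $\rho_p(\pi_g)$ as a (perhaps central or solvable) extension of a non-abelian finite simple group. This is precisely the content of the intermediate result in the proof of Theorem~\ref{prosolvable}; once it is in hand, the translation into characteristic quotients of $\pi_g$ itself is purely formal, driven by Dehn--Nielsen--Baer together with the Birman sequence.
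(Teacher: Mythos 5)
There is a genuine gap at the very first step, in the phrase ``$\ker(\rho_p|_{\pi_g})$ is automatically a characteristic subgroup of $\pi_g$.'' What the Birman sequence together with Dehn--Nielsen--Baer gives you is that conjugation by $\Gamma_g^1$ realizes ${\rm Aut}^+(\pi_g)$, the \emph{orientation-preserving} automorphisms, which form an index~$2$ subgroup of the full group ${\rm Aut}(\pi_g)$. Your equivariance argument therefore only shows that $\ker(\Psi|_{\pi_g})$ is ${\rm Aut}^+(\pi_g)$-invariant (and, granted that, your second step of pulling back a subgroup of the finite image that is invariant under all automorphisms of the quotient is fine and essentially matches the paper, where the image of $\pi_g$ under projection to a simple factor followed by reduction mod $q$ is already $PSL(N_{g,p},\mathbf F_q)$ or $PSU(N_{g,p},\mathbf F_q)$). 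But ``characteristic'' requires invariance under the orientation-reversing coset as well, and those automorphisms are \emph{not} realized by conjugation inside $\Gamma_g^1$, so nothing formal forces the kernel of a quantum representation to be preserved by them.

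This is exactly the non-formal point the paper has to work for: it takes an orientation-reversing involution $\tau$ and proves (Lemma~\ref{reversing}, via the mirror-symmetry behaviour of the Kauffman bracket, $\langle \tau(K)\rangle_A=\langle K\rangle_{A^{-1}}$) that $\tau$ acts on the space of conformal blocks as the anti-linear complex conjugation $J$, so that $\rho_{p,\zeta,(i)}(\tau x\tau^{-1})=\rho_{p,\overline\zeta,(i)}(x)$. Only because the representation used is the product over all Galois places, and because $J$ induces an automorphism of $\mathbb G_{g,p,(i)}$ preserving the lattice $\mathbb G_{g,p,(i)}(\Z)$ (hence making $\Psi|_{\pi_g}$ and $\Psi|_{\pi_g}\circ\tau$ equivalent), does one conclude that $\ker(\Psi|_{\pi_g})$ is also $\tau$-invariant and hence genuinely characteristic. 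A priori the twist by $\tau$ could replace the representation at $\zeta$ by the inequivalent one at $\overline\zeta$ and move the kernel, so this step cannot be dismissed as ``purely formal''; your proposal as written omits it, and with it the heart of the proof of Theorem~\ref{Lubotzky}.
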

This answers a question of Lubotzky from (\cite{Lub}, sections 10 and 6.4).

The proof of the main results goes as follows. 
We consider the so-called quantum representations of the mapping class groups $\Gamma_g$ and $\Gamma_g^1$
depending on some root of unity of order $2p$. 
It was proved in \cite{F} that these representations have infinite image, for $p\geq 5$. 
The proof was simplified in \cite{Mas2} where explicit elements of infinite order were found. Further, in \cite{LW} the authors showed that the images of $\Gamma_g$  are topologically dense in the corresponding special unitary groups, when $p\geq 5$ is prime. On the other hand the matrices in the images have coefficients in a cyclotomic ring (see \cite{GM}). 
Eventually the restriction of scalars provides  Zariski dense discrete representations in semi-simple linear algebraic groups 
defined over $\Q$ whose images are contained in arithmetic groups of higher rank (see \cite{GM}).  
The aim of \cite{F2} and \cite{MRe} was to  construct quotients of 
$\Gamma_g$ which are simple finite groups of Lie type of arbitrary large rank.

Our strategy here is to consider the restriction of the quantum representations  from $\Gamma_{g}^1$ to 
the subgroup $\pi_g$. These representations were recently studied by Koberda and Santharoubane in \cite{KS}, where 
it is proved that they still have infinite images, while they factor through  the Burnside-type group $B(\pi_g,p, \S(S_g))$.
Our aim is to show that the restriction of scalars provides  Zariski dense discrete representations of $B(\pi_g,p, \S(S_g))$ in some semi-simple linear algebraic group defined over $\Q$ 
of higher $\R$-rank. 
The Nori-Weisfeiler approximation theorem (see \cite{No,Weis}) then provides  
many finite quotients of congruence type. This implies that our profinite Burnside-type groups surjects onto 
an infinite product of simple non-abelian groups, proving our first theorem.  We note that 
the image of the surface group coincides with that of the mapping class group. 
Thus every kernel of a homomorphism of $\pi_g$ onto a finite simple quotient obtained this way is invariant by the 
mapping class group action. We obtain therefore finite simple characteristic quotients of $\pi_g$, 
proving the second theorem. 
Eventually, we notice that the quotients obtained by this method are 
principal  congruence quotients.

{\bf Acknowledgements}. The authors are indebted to C. Blanchet, S. Checcoli, P. Eyssidieux, T. Koberda, J. March\'e, B. Remy and T. Venkataramana  for useful discussions and to the referee for several corrections, suggestions improving the presentation and for pointing out that our methods also  apply to Lubotzky's question above.

\section{Preliminaries on quantum mapping class group representations}
%%%%%%%%%%%%%%%%%%%%%%%%%%%%%%%%%%%%%%
\subsection{The setting of the skein TQFT}\label{tqft}
%%%%%%%%%%%%%%%%%%%%%%%%%%%%%%%%%

 A TQFT   is a 
functor from the category of surfaces into the category of finite dimensional vector spaces. 
Specifically, the objects of the first category are closed oriented surfaces endowed with 
colored banded points and morphisms between two objects are cobordisms 
decorated by uni-trivalent ribbon graphs compatible with the banded points.
A banded point on a surface is a point with a tangent vector at that point, or equivalently 
a germ of an oriented interval embedded in the surface. There is a corresponding  
surface with colored boundary obtained by deleting a small neighborhood of the 
banded points and letting the boundary circles inherit the colors of the respective points.

We will use the TQFT functor $\mathcal V_p$, 
for $p\geq 3$ and a primitive root of unity $A$ of order $2p$, as 
defined in \cite{BHMV}. 
The vector space  associated by the functor $\mathcal V_p$ to a surface 
is called the {\em space of conformal blocks}.  Let $S_g$ denote the genus $g$ closed orientable surface, 
$H_g$ be a genus $g$ handlebody with $\partial H_g=\Sigma_g$. 
 Assume given a finite set  $\mathcal Y$ of banded 
points on $S_g$. Let $G$ be a uni-trivalent ribbon graph embedded in $H_g$ in such a way that 
$H_g$ retracts onto $G$, its univalent vertices are the banded points $\mathcal Y$ and it has no other intersections 
with $S_g$. 

We fix a natural odd number $p\geq 3$, called the {\em  level} of the TQFT. We define the 
{\em set of colors} in level $p$ to be $\mathcal C_p=\{0,2,4,\ldots,p-3\}$.

An edge coloring of $G$ is called {\em $p$-admissible} if the triangle inequality is 
satisfied at any trivalent vertex of $G$ and the sum of the three colors around a 
vertex is bounded by $2(p-2)$. 

Fix a coloring of the banded points $\mathcal Y$. Then there exists a basis of the space of conformal blocks associated to the surface $(\Sigma_g, \mathcal Y)$ with the  colored banded points (or the corresponding surface with colored boundary) 
which is indexed by the set of all $p$-admissible colorings of $G$ 
extending the boundary coloring. We  denote by $W_{g, (i_1,i_2,\ldots,i_r)}$ the vector space associated to the closed surface $\Sigma_g$ with $r$ banded points colored by $i_1,i_2,\ldots,i_r\in \mathcal C_p$. Note that 
banded points colored by $0$ do not contribute. 

Observe that an admissible $p$-coloring of $G$ provides an element of the skein module 
$S_{A}(H_g)$ of the handlebody  with banded boundary points colored $(i_1,i_2,\ldots,i_r)$, evaluated at the  
primitive $2p$-th root of unity $A$. This skein element is obtained by cabling the edges of $G$ by the Jones-Wenzl idempotents prescribed by the coloring and having banded points colors fixed.  
We suppose that $H_g$ is embedded in a standard way into the $3$-sphere $S^3$, so that the closure of 
its complement is also a genus $g$ handlebody $\overline{H}_g$.  There is then a 
sesquilinear form: 
\[ \langle \;,\; \rangle: S_{A}(H_g)\times S_{A}(\overline{H}_g)\to \C\]
defined by 
\[ \langle x, y \rangle= \langle x \sqcup y \rangle.\]
Here $x\sqcup y$ is the element of $S_{A}(S^3)$ obtained by the disjoint union of  $x$ and $y$ in 
$H_g\cup\overline{H}_g=S^3$, and $\langle \; \rangle: S_{A}(S^3)\to \C$ is the Kauffman bracket invariant. 

Eventually the space of conformal blocks $W_{g,(i_1,i_2,\ldots,i_r)}$ is the quotient 
$S_{A}(H_g)/\ker \langle\;,\; \rangle$ by the left kernel of the sesquilinear form above. It follows that $W_{g,(i_1,i_2,\ldots,i_r)}$ is endowed with an induced {\em Hermitian form} $H_{A}$. 

The projections of skein elements associated to the $p$-admissible colorings of a trivalent graph $G$ as above form an orthogonal basis of $W_{g,(i_1,i_2,\ldots,i_r)}$ with respect to $H_{A}$. It is known (\cite{BHMV}) 
that $H_{A}$ only depends on the $p$-th root of unity $\zeta_p=A^2$ and that in this orthogonal basis the diagonal entries belong to the totally real maximal subfield $\Q(\zeta_p+\overline{\zeta_p})$ (after rescaling). 

Let $G'\subset G$ be a uni-trivalent subgraph  whose degree one vertices are colored, corresponding to a 
sub-surface $\Sigma'$ of $\Sigma_g$ with colored boundary. The projections in $W_{g,(i_1,i_2,\ldots,i_r)}$ of skein elements associated to 
the $p$-admissible colorings of $G'$ form an orthogonal basis of the space of conformal blocks 
associated to the surface $\Sigma'$ with colored boundary components. 

There is a geometric action of the mapping class groups of the handlebodies $H_g$ and $\overline{H}_g$ respectively
on their skein modules and hence on the space of conformal blocks. Moreover, these actions extend to a projective  
action $\rho_{g, p, (i_1,\ldots,i_r),A}$ of $\Gamma_g^r$ on $W_{g,(i_1,i_2,\ldots,i_r)}$ respecting the Hermitian form $H_{\zeta_p}=H_A$. When referring to $\rho_{g, p, (i_1,\ldots,i_r),A}$ the subscript 
specifying the genus $g$ will most often be dropped when its value will be clear from the context.
Notice that the mapping class group of an essential (i.e. without annuli or disks complements) 
sub-surface $\Sigma'\subset \Sigma_g$ is a subgroup of $\Gamma_g$ which preserves the subspace of conformal blocs 
associated to $\Sigma'$ with colored boundary.  It is worthy to note that  $\rho_{p,(i_1,\ldots,i_r), A}$ only depends on 
$\zeta_p=A^2$, so we can unambigously shift the notation for this representation to $\rho_{p,(i_1,\ldots,i_r),\zeta_p}$. 

There is a central extension $\widetilde{\Gamma_g}$ of $\Gamma_g$ by $\Z$ and a linear representation  
 $\ro_{p, \zeta_p}$ on $W_{g}$ which  resolves the projective ambiguity of $\rho_{p,\zeta_p}$. 
 The largest such central extension has class $12$ times the Euler class (see \cite{Ger,MR}), but 
 the central extension considered in this paper is  an index $12$ subgroup of it, called $\widetilde{\Gamma}_1$ 
 in \cite{MR}. When $g\geq 3$ it is a perfect group which therefore coincides with the universal central extension. 
 
 We denote by $S_{g,n}^r$ the compact orientable surface of genus $g$ with $n$ boundary components and $r$ marked points. Then $\Gamma_{g,n}^r$ denotes the pure mapping class group of $S_{g,n}^r$ which fixes pointwise boundary components and marked points. 
 
We consider a subsurface $\Sigma_{g,r}\subset \Sigma_{g+r}$ whose complement consists of $r$ copies of $\Sigma_{1,1}$. 
Let $\widetilde{\Gamma_g^r}$ be the pull-back of the central extension $\widetilde{\Gamma_g}$ to 
the subgroup $\Gamma_{g,r}\subset \Gamma_{g+r}$. Then $\widetilde{\Gamma_{g,r}}$ is also a central extension, which we denote $\widetilde{\Gamma_{g}^r}$ of $\Gamma_g^r$ by $\Z^{r+1}$. 
From \cite{Ger,MR} we derive that $\widetilde{\Gamma_g^r}$ is perfect, when $g\geq 3$ and of 
order $10$, when $g=2$.

\begin{definition}\label{qrep}
Let $p\geq 5$ be odd and $\zeta_p$  a primitive 
$p$-th root of unity. We denote 
by $\ro_{p, \zeta_p, (i_1,i_2,\ldots,i_r)}$ the linear representation of the central extension 
$\widetilde{\Gamma_g^r}$ which acts on the vector space $W_{g,p,(i_1,i_2,\ldots,i_r)}$ associated by the TQFT to the surface with the corresponding colored banded points (see \cite{Ger,MR}).  
\end{definition}

The functor $\mathcal V_p$ associates to a handlebody $H_g$ the projection of the skein element 
corresponding to the trivial coloring of the trivalent graph $G$ by $0$. The invariant associated 
to a closed 3-manifold is given by pairing the two vectors associated to handlebodies in a Heegaard decomposition 
of some genus $g$ and taking into account the twisting by the gluing mapping class action on $W_g$. 

One should notice that the skein TQFT $\mathcal V_p$ is unitary, in the sense that $H_{\zeta_p}$ is 
a positive definite Hermitian form when $\zeta_p=(-1)^p \exp\left(\frac{2\pi i}{p}\right)$, corresponding to  
$A_p=(-1)^{\frac{p-1}{2}}\exp\left(\frac{(p+1)\pi i}{2p}\right)$. 
For the sake of notational simplicity, from now we will drop the subscript $p$ 
in $\zeta_p$, when the order of the root of unity will be clear from the context and the precise choice of the root of given order won't matter.  
Note that for a general primitive $p$-th root of unity, 
the isometries of $H_{\zeta}$ form a pseudo-unitary group.

Now, the image $\rho_{p,\zeta}(T_{\gamma})$  of a right hand Dehn twist $T_{\gamma}$  in 
a convenient basis given by a trivalent graph is easy to describe. 
Assume that the simple curve $\gamma$ is the boundary of a small disk intersecting once transversely 
an edge  $e$ of the graph $G$. Consider $v\in W_g$ be a vector of the basis given by colorings of the graph $G$ 
and assume that edge $e$ is labeled by the color $c(e)\in \mathcal C_p$. Then the action of 
the (canonical)  lift  $\widetilde{T_{\gamma}}$ of the Dehn twist $T_{\gamma}$ in $\widetilde{\Gamma_g}$ 
is given by (see \cite{BHMV}, 5.8) :
\[ \ro_{p,\zeta}(\widetilde{T_{\gamma}}) v =A^{c(e)(c(e)+2)} v\]

%%%%%%%%%%%%%%%%%%%%%%%%%%%%%%%%%%%%%%
\subsection{Unitary groups of spaces of conformal blocks}
%%%%%%%%%%%%%%%%%%%%%%%%%%%%%%%%%%%%%%
%%%%%%%%%%%%%%%%%%%%%%%%%%%%%%%%%%%%%%

For a prime $p\geq 5$  we denote 
by ${\mathcal O}_p$ the  ring of cyclotomic integers 
${\mathcal O}_p=\Z[\zeta_p]$, if   
$p\equiv 3({\rm mod}\: 4)$ and ${\mathcal O}_p=\Z[\zeta_{4p}]$, if  
$p\equiv 1({\rm mod}\:4)$ respectively, where $\zeta_r$ denotes a primitive
$r$-th root of unity (the subscript $r$ will sometimes be omitted). 
The main result of \cite{GM} states that there exists a free ${\mathcal O}_p$\,-lattice 
$\Lambda_{g,p}$ in the $\C$-vector space of conformal 
blocks associated by  the TQFT ${\mathcal V}_p$ to the genus $g$
closed orientable surface and a non-degenerate Hermitian  
${\mathcal O}_p$-valued form on 
$\Lambda_{g,p}$ both invariant under  the action of $\widetilde{\Gamma_g}$ via the representation 
$\widetilde{\rho}_{p,\zeta}$. 
Therefore the image of the mapping class group consists of 
unitary matrices (with respect to the Hermitian form) with 
entries in ${\mathcal O}_p$. 
Let  $\mathbb U_{g,p, \zeta}({\mathcal O}_p)$ and  $P\mathbb U_{g,p,\zeta}({\mathcal O}_p)$ be the group of all such matrices and respectively its quotient by scalars. 
Then $\mathbb U_{g,p, \zeta}({\mathcal O}_p)$ is the group of ${\mathcal O}_p$-points of 
the unitary group $\mathbb U_{g,p, \zeta}$ associated to the Hermitian form $H_{\zeta}$, which is 
a linear algebraic group defined over $\Q(\zeta+\overline{\zeta})$. 

When $p$ is  prime $p\geq 5$ and $g\geq 2$, $(g,p)\neq 5$, then  
$\ro_{p,\zeta_p}$ takes values in  the special unitary group $S\mathbb U_{g,p,\zeta_p}$ (see \cite{DW,
F2,FP2}).  
It is known that $S\mathbb U_{g,p,\zeta}({\mathcal O}_p)$ is an irreducible 
lattice in a semi-simple algebraic group $\mathbb G_{g,p}$ obtained 
by the so-called restriction of scalars construction from the 
totally real cyclotomic field $\Q(\zeta_p+\overline{\zeta_p})$ to $\Q$. 
Specifically, the group  $\mathbb G_{g,p}$ is a product 
$\prod_{\sigma\in S(p)}S\mathbb U_{g,p,\sigma(\zeta)}$. Here $S(p)$ stands for 
a set of representatives of the classes of complex 
embeddings $\sigma$ of $\mathcal O_p$ 
modulo complex conjugation, or equivalently the set of places of 
the totally real  cyclotomic field $\Q(\zeta_p+\overline{\zeta_p})$. 
The factor $S\mathbb U_{g,p,\sigma(\zeta)}$ is the 
special unitary group associated to the 
Hermitian form conjugated by $\sigma$, thus corresponding to some 
Galois conjugate root of unity.  

Denote   
by $\ro_p$ and $\rho_p$ the representations  
$\prod_{\sigma\in S(p)} \ro_{p,\sigma(A_p^2)}$ and 
$\prod_{\sigma\in S(p)} \rho_{p,\sigma(A_p^2)}$, respectively. 
Notice that the real Lie group $\mathbb G_{g,p}$ 
is a semi-simple algebraic group defined over $\Q$.

In \cite{F2} it is proved that 
$\ro_p(\widetilde{\Gamma_g})$ is a discrete Zariski dense subgroup 
of $\mathbb G_{g,p}(\R)$ whose projections onto the simple factors of 
$\mathbb G_{g,p}(\R)$ are topologically dense, for $g \geq 3$ and  $p\geq 7$ 
prime, $p\equiv 3({\rm mod }\; 4)$. 

\begin{remark}\label{mod4}
When $p\equiv 1 ({\rm mod }\; 4)$ the image of the central extension 
of $\Gamma_g$ from \cite{MR} by $\ro_p$ is contained in 
$\mathbb G_{g,p}(\Z[i])$ and thus it is a discrete 
Zariski dense subgroup of $\mathbb G_{g,p}(\C)$. 
However, if we restrict to the universal central extension  
$\widetilde{\Gamma_g}$  coefficients are reduced from 
$\Z[\zeta_{4p}]$ to $\Z[\zeta_p]$ (see  \cite{GM}, section 13). 
Note that the corresponding invariant form $H_{\zeta_p}$ should be suitably 
rescaled and after rescaling it will be skew-Hermitian when $g$ is odd and Hermitian for even $g$.  

As mentioned in (\cite{MRe}, Rem.3.5) for the proof of our main result we don't need the integral TQFT of \cite{GM} as the 
Burnside-type groups are finitely generated and hence only finitely primes could appear in the denominators of 
matrices in their image. 
\end{remark}

\section{ Quantum surface group representations}
\subsection{Zariski density of quantum representations} 
Our aim is to find the Zariski closures of $\rho_{p, (i)}(\pi_g)$. 
We follow closely the strategy from \cite{F2}, where we proved that 
$\rho_{p, (i)}(\Gamma_g)$ is Zariski dense in $P\mathbb G_p(\R)$, based on the topological density result 
in the corresponding special unitary group earlier obtained in 
\cite{LW}.

The mapping class group $\Gamma_{g,1}$ is a subgroup of $\Gamma_{g+1}$, by identifying 
$S_{g+1}$ with the result of gluing of $S_{g,1}$ and $S_{1,1}$. 
It is well-known that 
\[ W_{g+1,p} =\bigoplus _{i} W_{g,p,(i)}\otimes W_{1,p,(i)}\]
The decomposition corresponds to the eigenspaces for the Dehn twist $\widetilde{T_c}$ along the curve $c=\partial S_{g,1}$.  
Let $\mathbb U_{g,p,\zeta, (i)}=U(W_{g,p,(i)},H_{\zeta})$ be the unitary subgroup keeping invariant the subspace $W_{g,p,(i)}$, when endowed 
with the (restriction of the)  Hermitian form $H_{\zeta}$. The group $\mathbb U_{g,p,\zeta, (i)}$ is a 
closed  linear algebraic  subgroup of $\mathbb U_{g+1,p,\zeta}$ and is also defined over the 
maximal totally real algebraic field $\Q(\zeta+\overline{\zeta})$ of $\Q(\zeta)$.  

Since $\widetilde{\Gamma_{g}^1}$ 
is perfect when $g\geq 3$ and of order $10$ for $g=2$, it follows that  $\ro_{p, (i)}(\widetilde{\Gamma_g^1})$ 
is contained within the special unitary group $S\mathbb U_{g,p,\zeta, (i)}$, if $(g,p)\neq (2,5)$, as in \cite{DW,F2,FP2}.

We denote by $\mathbb G_{g,p,(i)}$ the group obtained by scalar restriction from $\Q(\zeta+\overline{\zeta})$ to $\Q$ 
of the linear algebraic group $S\mathbb U_{g,p,\zeta, (i)}$, namely the product 
$\mathbb G_{g,p,(i)}=\prod_{\sigma\in S(p)}S\mathbb U_{g,p,\sigma(\zeta), (i)}$.  
It follows that the product representation  $\ro_{p,(i)}=\prod_{\sigma\in S(p)} \ro_{p, \sigma(A_p^2), (i)}$ of $\widetilde{\Gamma_g^1}$ takes values in $\mathbb G_{g,p,(i)}$. 
Since the boundary Dehn twist acts as a scalar this  representation of $\widetilde{\Gamma_g^1}$ descends to a 
projective representation $\rho_{g,p,(i)}:\Gamma_{g}^1\to P\mathbb G_{g,p,(i)}$. 

Set $\widetilde{\pi_g}=\ker(\widetilde{\Gamma_g^1}\to \Gamma_g)$. It follows that $\widetilde{\pi_g}$ is an extension by $\Z^2$ of $\pi_g$.

Our main result in this section is: 
\begin{theorem}\label{Zdensity} 
Let $g\geq 2$ and  $p\equiv 3 ({\rm mod}\; 4)$, $p$ a large enough prime.
Then the Zariski closure of $\ro_{p, (p-3)}(\widetilde{\pi_g})$ is $\mathbb G_{g,p,(p-3)}(\R)$. 
Moreover, if $g\geq 3$ every non-compact factor of $\mathbb G_{g,p,(p-3)}(\R)$ 
has real rank at least $2$.   
\end{theorem}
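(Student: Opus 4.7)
The plan is to adapt the normality-plus-density strategy of \cite{F2} to the normal subgroup $\widetilde{\pi_g}\trianglelefteq \widetilde{\Gamma_g^1}$. As a preliminary input I would first establish that the Zariski closure of $\widetilde{\rho}_{p,(p-3)}(\widetilde{\Gamma_g^1})$ is all of $\mathbb G_{g,p,(p-3)}(\R)$; this is the exact analogue for the once-marked surface of the closed-surface statement of \cite{F2}, and the proof ports over essentially verbatim: the Larsen--Wang topological density \cite{LW} upgrades to each simple factor $S\mathbb U_{g,p,\sigma(\zeta),(p-3)}$, while the arithmetic independence of the Galois-conjugate Dehn-twist eigenvalues $A^{c(e)(c(e)+2)}$ across the places $\sigma\in S(p)$ prevents any algebraic relation between distinct factors, yielding Zariski density in the restriction-of-scalars product.

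Next, since $\widetilde{\pi_g}$ is normal in $\widetilde{\Gamma_g^1}$, the Zariski closure $H$ of $\widetilde{\rho}_{p,(p-3)}(\widetilde{\pi_g})$ is a Zariski-closed normal subgroup of $\mathbb G_{g,p,(p-3)}(\R)$. Since the target is a direct product of almost-simple algebraic groups $S\mathbb U_{g,p,\sigma(\zeta),(p-3)}$, the identity component $H^0$ must itself be a subproduct $\prod_{\sigma\in T}S\mathbb U_{g,p,\sigma(\zeta),(p-3)}$ for some $T\subseteq S(p)$, up to finite central issues. To prove $T=S(p)$, I would invoke the theorem of Koberda--Santharoubane \cite{KS}: the restriction of $\rho_{p,\zeta,(p-3)}$ to $\pi_g$ has infinite image even though it factors through the Burnside-type quotient $B(\pi_g,p,\mathcal S(S_g))$. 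Applying Galois conjugation, the image of $\pi_g$ in every factor $S\mathbb U_{g,p,\sigma(\zeta),(p-3)}$ remains infinite, hence has positive-dimensional Zariski closure there; since $H^0$ is a subproduct, this forces $\sigma\in T$ for every $\sigma$, so $H^0=\mathbb G_{g,p,(p-3)}(\R)^0$ and the first assertion follows.

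For the moreover part, each simple factor is the special unitary group of $H_{\sigma(\zeta)}$ on $W_{g,p,(p-3)}$, of signature $(a_\sigma,b_\sigma)$ with $a_\sigma+b_\sigma=\dim W_{g,p,(p-3)}$; the factor is non-compact iff $a_\sigma,b_\sigma\geq 1$, with real rank $\min(a_\sigma,b_\sigma)$. Using the orthogonal basis indexed by $p$-admissible colorings of a trivalent spine of $S_{g,1}$ with fixed boundary color $p-3$, the diagonal entries of $H_{\sigma(\zeta)}$ are explicit products of quantum integers in $\Q(\zeta+\overline{\zeta})$. Following the asymptotic sign analysis of \cite{F2,FP2}, I would count $a_\sigma$ and $b_\sigma$ by tallying positive versus negative diagonal entries under $\sigma$; a Verlinde-type dimension estimate then shows that for $g\geq 3$ and $p$ large, both $a_\sigma$ and $b_\sigma$ exceed $1$ in every non-compact factor.

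The main obstacle is this last signature calculation: pinning down $\min(a_\sigma,b_\sigma)\geq 2$ for every non-compact factor requires a precise sign analysis of the concrete products of quantum integers under each embedding $\sigma$, adapted from the closed-surface computations of \cite{F2} to the once-marked-point setting with fixed boundary color $p-3$. The normality and density steps are, by comparison, formal consequences of what is already in \cite{F2,KS,LW}.
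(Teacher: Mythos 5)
Your route to the first assertion is structurally the same as the paper's: single-factor density for the mapping class group, then normality of $\widetilde{\pi_g}$ plus an infinite-image input to descend to the surface group, then a restriction-of-scalars/product argument. (The paper organizes it in the other order: it first descends factor by factor, using that the closure of $\ro_{p,\zeta,(p-3)}(\widetilde{\pi_g})$ is normal in $S\mathbb U_{g,p,\zeta,(p-3)}$ hence central or everything, and then handles the product via Kuperberg's Hall lemma together with a Dieudonn\'e--Rickart argument; you do the product step at the level of $\widetilde{\Gamma_g^1}$ and then use normality inside the product. Either organization works.) But there are genuine gaps. First, your ``preliminary input'' does not port over verbatim from \cite{F2}: Larsen--Wang \cite{LW} treats closed surfaces with trivial color, and the extension to boundary color $p-3$ is the technical heart here (Proposition \ref{mcgdensity}); it needs the Verlinde growth estimate of Lemma \ref{growth}, which fails at $g=2$, so the inductive step $g=2\to 3$ requires Lemma \ref{square}, a Siegel/Faltings finiteness statement ruling out the symmetric/exterior square possibilities for all but finitely many $p$ --- this is exactly where the hypothesis ``$p$ large enough'' enters, and your sketch treats the step as formal. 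Likewise ``arithmetic independence of Dehn-twist eigenvalues'' has to become an actual inequivalence proof; the paper does this with Dieudonn\'e--Rickart and the eigenvalues $A^{j(j+2)-k(k+2)}$ of the point-pushing elements $\widetilde{T_{\gamma_+}}\widetilde{T_{\gamma_-}}^{-1}$ on the blocks $W_{0,(i,j,k)}\otimes W_{g-1,(j,k)}$, plus a dimension count distinguishing those blocks.

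Second, \cite{KS} proves infinite image of $\pi_g$ for boundary color $2$, not $p-3$; the statement you invoke must itself be proved, which the paper does inside Proposition \ref{density} by restricting to embedded braid groups and using Burau representations at roots of unity \cite{FK}, with a separate explicit computation when $g=2$. Third, the ``moreover'' clause is not established in your write-up: the signature analysis you outline is the right kind of tool but you explicitly leave it as the main obstacle, so real rank at least $2$ remains unproved. The paper sidesteps the computation by observing that $\mathbb G_{g,p,(p-3)}$ contains $\mathbb G_{g-1,p}$ (via $W_{g-1,(0)}\otimes W_{1,(0,p-3)}\subset W_{g,(p-3)}$, the second factor being one-dimensional), so the case $g\geq 4$ follows from \cite{FP2}, and the case $g=3$ by following the proof there. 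In short: the density skeleton is sound, but as written you assume the two hardest ingredients (single-factor density at color $p-3$ and infinite image of $\pi_g$ at color $p-3$) and leave the rank statement open.
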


The rest of this section is devoted to the proof of the theorem above. 

The key ingredient is the following proposition, whose rather technical proof is postponed to 
section \ref{proofdensity} below: 
\begin{proposition}\label{mcgdensity} 

Let $g\geq 2$ and  $p\equiv 3 ({\rm mod}\; 4)$, $p$ a large enough prime.
The representation $\ro_{p, \zeta, (p-3)}$ of $\widetilde{\Gamma_{g,1}}$ into $W_{g,p,(p-3)}$ has dense image in the special unitary group $S\mathbb U_{g,p,\zeta, (p-3)}$.   
\end{proposition}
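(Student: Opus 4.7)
The plan is to follow the strategy developed in \cite{F2} for the uncolored representation $\widetilde{\rho}_p(\widetilde{\Gamma_g})$, which rests on the Larsen--Wang topological density theorem \cite{LW}. Because the Hermitian form $H_\zeta$ is positive definite for the distinguished choice $\zeta=(-1)^p\exp(2\pi i/p)$, the group $S\mathbb U_{g,p,\zeta,(p-3)}$ is a compact real Lie group, so topological density coincides with Zariski density. A preliminary step, following the template of \cite{DW,F2,FP2}, is to verify that the image of $\widetilde{\rho}_{p,\zeta,(p-3)}$ lies in the special (determinant one) unitary subgroup: this uses that $\widetilde{\Gamma_g^1}$ is perfect for $g\geq 3$, and that it has only a finite abelian quotient for $g=2$, together with the fact that the only characters of $\widetilde{\Gamma_g^1}$ trivial on the boundary Dehn twist are trivial.

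The core of the proof is to produce sufficiently many explicit elements of the image to force the Zariski closure to exhaust $S\mathbb U_{g,p,\zeta,(p-3)}$. In the preferred basis of $W_{g,p,(p-3)}$ indexed by $p$-admissible colorings of a trivalent graph $G\subset H_g$ whose unique univalent vertex carries the color $p-3$, the formula from the end of Section \ref{tqft} gives $\widetilde{\rho}_{p,\zeta}(\widetilde{T_\gamma})\,v = A^{c(e)(c(e)+2)}\,v$ whenever $\gamma$ bounds a small disk meeting $G$ transversely in a single edge $e$. Choosing $e$ so that admissible colorings achieve many distinct values of $c(e)\in\mathcal C_p$ produces a diagonal matrix whose eigenvalues are independent $p$-th roots of unity for $p$ prime, hence generating a maximal torus of $S\mathbb U_{g,p,\zeta,(p-3)}$. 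Varying the pants decomposition (by $F$-moves and $S$-moves) and the separating curve $\gamma$ yields a collection of maximal tori inside the image whose joint Zariski closure should fill out the whole simple group.

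A parallel viewpoint, useful as a cross-check, is to embed $\widetilde{\Gamma_{g,1}}$ into $\widetilde{\Gamma_{g+1}}$ via the gluing $S_{g+1}=S_{g,1}\cup_c S_{1,1}$. Under the decomposition $W_{g+1,p}=\bigoplus_i W_{g,p,(i)}\otimes W_{1,p,(i)}$ the subgroup $\widetilde{\Gamma_{g,1}}$ acts diagonally in the first tensor factor, and the $(p-3)$-block is stabilised. Combining Larsen--Wang density of $\widetilde{\rho}_p(\widetilde{\Gamma_{g+1}})$ in $S\mathbb U_{g+1,p,\zeta}$ with a Goursat-type argument isolates density of the projection onto the $(p-3)$-isotypic summand, provided one can rule out the possibility that the image factors through a diagonal homomorphism between simple factors.

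The main obstacle will be to show that the closure cannot be a proper closed subgroup; equivalently, that the Lie algebra generated by the logarithms of the diagonal Dehn-twist matrices together with their conjugates under the basis-change matrices fills all of the Lie algebra of $S\mathbb U_{g,p,\zeta,(p-3)}$. Via the explicit form of the $F$-matrices in terms of quantum $6j$-symbols, this reduces to showing that certain sums of products of $p$-th roots of unity are nonzero; this is the arithmetic input where the hypothesis that $p$ is a large prime with $p\equiv 3\pmod 4$ enters, through the non-vanishing statement collected in Lemma \ref{square}. Irreducibility of $W_{g,p,(p-3)}$ under the image (which for large prime $p$ follows from the known action on basis elements by the $S$-matrix and the $F$-matrix) is the other essential ingredient preventing block-decomposable Zariski closures.
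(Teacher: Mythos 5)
There is a genuine gap: the core step of your argument does not work as stated. A Dehn twist along a curve bounding a disk meeting one edge $e$ acts with eigenvalues $A^{c(e)(c(e)+2)}$, and $c(e)$ ranges over at most $|\mathcal C_p|=\frac{p-1}{2}$ colors, so the image of $\widetilde{T_\gamma}$ has very few distinct eigenvalues, each with enormous multiplicity; its closure is a torus of rank at most $\frac{p-1}{2}$, while a maximal torus of $S\mathbb U_{g,p,\zeta,(p-3)}$ has rank $N_{g,p}-1$ with $N_{g,p}=\dim W_{g,p,(p-3)}$ growing polynomially in $p$ of high degree (already $\frac{p^3-p}{24}$ for $g=2$). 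So these matrices do not generate maximal tori, and the assertion that the tori obtained from various pants decompositions ``should fill out the whole simple group'' is precisely the theorem, not an argument; your last paragraph concedes this (``the main obstacle will be\ldots'') without closing it. The ``parallel viewpoint'' is also circular: density of $\ro_{p}(\widetilde{\Gamma_{g+1}})$ in $S\mathbb U_{g+1,p,\zeta}$ gives no control over the closure of the proper subgroup $\ro_p(\widetilde{\Gamma_{g,1}})$, and a Goursat-type argument requires knowing first that the projection to the $(p-3)$-block is dense, which is exactly what is to be proved.

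The paper's actual proof is the Larsen--Wang classification machinery run inductively on the genus for the colored blocks: irreducibility (Roberts), non-self-duality of the restriction to $\widetilde{\Gamma_{g-1,1}\times\Gamma_{1,1}}$, isotypicity and tensor-indecomposability for the identity component of the closure, and, as the decisive input for the induction step, the dimension-growth inequality of Lemma \ref{growth}, $\dim W_{g+1,p,(p-3)}<\frac{1}{2}\dim W_{g,p,(p-3)}\bigl(\dim W_{g,p,(p-3)}-1\bigr)$ for $g\geq 3$ (the boundary-colored analogue of Larsen--Wang's Lemma 12). At the step $g=2\to 3$ only a weaker bound holds, so the classification leaves open the possibility that $V_{3,p,(p-3)}$ is the symmetric or exterior square of a type $A_n$ group; this is excluded for large $p$ by Lemma \ref{square}, which is a Siegel/Faltings finiteness statement about $1+8\dim W_{3,p,(p-3)}$ being a perfect square --- not, as you describe it, a non-vanishing result for sums of products of $p$-th roots of unity coming from $6j$-symbols. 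That misreading also misplaces where the hypothesis ``$p$ a large enough prime'' enters. Your preliminary reductions (compactness of $S\mathbb U_{g,p,\zeta,(p-3)}$ for the distinguished $\zeta$, determinant-one via perfectness of $\widetilde{\Gamma_g^1}$) are fine and agree with the paper, but the heart of the proof is missing.
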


\begin{proposition}\label{density}
Let $g\geq 2$ and $p\geq 5$ be odd. 
Suppose that $\ro_{p, \zeta, (i)}(\widetilde{\Gamma_{g,1}})$ is Zariski dense in  $S\mathbb U_{g,p,\zeta, (i)}$. 
Then  $\ro_{p, \zeta, (i)}(\widetilde{\pi_{g}})$ is Zariski dense 
in the special unitary group $S\mathbb U_{g,p,\zeta, (i)}$.   
\end{proposition}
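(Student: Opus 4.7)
The plan is a standard normal-subgroup argument, exploiting the almost-simplicity of the special unitary group together with the normality of $\widetilde{\pi_g}$ inside $\widetilde{\Gamma_g^1}$ coming from the Birman exact sequence.

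Let $H\subset S\mathbb U_{g,p,\zeta,(i)}$ denote the Zariski closure of $\ro_{p,\zeta,(i)}(\widetilde{\pi_g})$. Since $\widetilde{\pi_g}$ is by construction the kernel of $\widetilde{\Gamma_g^1}\to \Gamma_g$, it is a normal subgroup of $\widetilde{\Gamma_{g,1}}=\widetilde{\Gamma_g^1}$; consequently $\ro_{p,\zeta,(i)}(\widetilde{\pi_g})$ is normalized by $\ro_{p,\zeta,(i)}(\widetilde{\Gamma_{g,1}})$. Because the Zariski closure of a subgroup is normalized by anything that normalizes the subgroup itself, $H$ is in fact normalized by the Zariski closure of $\ro_{p,\zeta,(i)}(\widetilde{\Gamma_{g,1}})$, which by hypothesis is all of $S\mathbb U_{g,p,\zeta,(i)}$. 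Hence $H$ is a normal algebraic subgroup of $S\mathbb U_{g,p,\zeta,(i)}$.

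Now $S\mathbb U_{g,p,\zeta,(i)}$ is almost simple as an algebraic group, so its only proper normal algebraic subgroups sit inside the finite center. Two cases therefore arise: either $H$ is central (and in particular finite), or $H=S\mathbb U_{g,p,\zeta,(i)}$. To exclude the first alternative it suffices to produce a single element of $\pi_g$ whose image under $\rho_{p,\zeta,(i)}$ in the projective unitary group is non-trivial. By Koberda--Santharoubane \cite{KS} the restriction of the projective quantum representation to $\pi_g$ has infinite image, so non-triviality is automatic; alternatively, it can be read off by hand from the Dehn-twist formula recalled above, applied to the point-pushing image $T_{\alpha'}T_{\alpha''}^{-1}$ of a non-separating loop $\alpha\in\pi_g$ (with $\alpha',\alpha''$ two parallel copies of $\alpha$ bounding a neighborhood of the marked point): choosing a trivalent graph containing $\alpha$ on an edge whose color differs from that of the parallel edge shows that the two twists act by distinct diagonal scalars and their product is non-scalar.

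The main obstacle is essentially bookkeeping: making sure that the Koberda--Santharoubane non-triviality (or the direct computation above) applies in the specific color $(i)=(p-3)$ that is singled out in the subsequent Theorem \ref{Zdensity}. Once this is granted, the proposition follows immediately, since almost-simplicity of $S\mathbb U_{g,p,\zeta,(i)}$ leaves no other option than $H=S\mathbb U_{g,p,\zeta,(i)}$.
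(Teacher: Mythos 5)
Your argument is correct, and its skeleton is the same as the paper's: $\widetilde{\pi_g}$ is normal in $\widetilde{\Gamma_g^1}$, so the (Zariski) closure $H$ of its image is a normal algebraic subgroup of $S\mathbb U_{g,p,\zeta,(i)}$ by the density hypothesis, and almost simplicity forces $H$ to be either central or everything. Where you genuinely diverge is in how the central case is excluded. The paper rules it out by proving that $\rho_{p,\zeta,(i)}(\pi_g)$ is \emph{infinite non-abelian}: it quotes Koberda--Santharoubane (which covers $i=2$) and then spends the bulk of the proof extending this to $i\neq 0$, in particular $i=p-3$, via restrictions to braid subgroups and Burau representations at roots of unity \cite{FK}, with a separate explicit computation for $g=2$, $i=p-3$. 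You observe, correctly, that centrality of $H$ would make the projective image of $\pi_g$ trivial, so a single non-scalar image of a point-pushing element suffices; and the diagonal action of $\widetilde{T_{\gamma_+}}\widetilde{T_{\gamma_-}}^{-1}$ in a pants decomposition adapted to a non-separating loop (the same eigenvalue formula the paper exploits later, in its lemma distinguishing $A$ from $\overline{B}$) provides this at essentially no cost. This is a legitimate and lighter route: it avoids any appeal to \cite{KS} or to the Burau analysis, at the price of proving only what the proposition needs (non-centrality) rather than the stronger infiniteness statement the paper reuses elsewhere.

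Two small points to tighten. First, exhibiting one basis vector on which the two encircled edges carry different colors is not by itself enough: the product could conceivably still act by a single scalar on the whole space. What you need is two admissible colorings $(j,k)$ with distinct exponents $j(j+2)-k(k+2)$ modulo $2p$; taking $(j,k)=(i,0)$ and $(0,i)$ (both admissible with $\dim W_{g-1,(i,0)}\neq 0$ for $g\geq 2$) gives eigenvalues $A^{\pm i(i+2)}$, which differ as soon as $p\nmid i(i+2)$ --- automatic for $p$ prime, and in the relevant case $i=p-3$ one gets $(p-3)(p-1)\equiv 3\ (\mathrm{mod}\ p)$, so no primality is even needed. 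Second, the whole argument (yours and the paper's) requires $i\neq 0$: for $i=0$ admissibility forces $j=k$ and point-pushing acts by scalars, so the conclusion fails; this hypothesis is implicit in the paper and should be implicit in your write-up as well.
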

\begin{proof}[Proof of Proposition \ref{density}]
As $\pi_g$ is a normal subgroup of $\Gamma_g^1$, we derive that the topological closure of its image by $\rho_{p,\zeta, (i)}$ is a 
closed normal Lie subgroup of the projective unitary group $P\mathbb U_{g,p,\zeta,(i)}$. 
Therefore the image of $\widetilde{\pi_g}$ is a closed normal subgroup 
of $S\mathbb U_{g,p,\zeta,(i)}$. Since the Lie algebra of $S\mathbb U_{g,p,\zeta,(i)}$ is simple 
it follows that the Lie group has dimension zero and hence it is a discrete subgroup. However a normal discrete subgroup 
of $S\mathbb U_{g,p,\zeta,(i)}$ must be contained in its center, which is cyclic of order 
$\dim W_{g,p,(i)}$. 

Now, the result of \cite{KS} for $i=2$ shows that the image of $\pi_g$  by $\rho_{p,\zeta, (i)}$ is infinite non-abelian. 
We claim that this holds true for all $i\neq 0$ and we  will give a detailed proof for $i=p-3$. 

The $k+1$-holed sphere $S_{0,k+1}$, whose boundary circles are colored by  
$\mathbf c=(a,a,\ldots,a, ak-2)$ has associated a space of conformal blocks $W_{0,\mathbf c}$ of dimension $k$
which has a natural action of the braid group $B_{k}$ on $k$ strings.  Note that $\Sigma_{0,k+1}$  
can be embedded into $\Sigma_{g,1}$ such that the homomorphism $B_{k}\to  \Gamma_{g,1}$ is injective, if $k\leq g$.  
It is well-known that this braid group action coincides 
with the Burau representation at a suitable root of unity (see \cite{FK}) 
twisted by a character. Specifically, the Burau representation is 
the one for which the standard braid generators have eigenvalues 
$-1$ and $A_p^{2a^2}$. Moreover, in \cite{FK} one proved that the image of the Burau representation 
of $B_3$ is infinite non-abelian if $A_p^{2a^2}$ is not a primitive root of unity of order $2,3,4$ or $5$, while 
the image of $B_4$ is  infinite non-abelian  (see e.g. \cite{F}) if  $A_p^{2a^2}$ is not a primitive root of unity of 
order $2$ or $3$. 

It suffices to consider $i\geq 4$. If we can write $i=ak-2$, $3\leq k \leq g$, $a\in \mathcal C_p$, 
the image of $B_k$ is infinite.  Further $\pi_1(S_{0,3})\subset \pi_1(S_{0,k+1})$, if $k\geq 2$ and the restriction of the Burau representation to  the pure braid group $PB_3$ is infinite non-abelian. But $PB_3=\mathbb F_2 \times \Z$, where the factor $\Z$ is central and its image by the Burau representation is of finite order, while the free factor $\mathbb F_2$ can be identified with $\pi_1(S_{0,3})$. We derive that the image of $\pi_1(S_{g,1})$ by the subrepresentation of $\rho_{p, (i)}$ 
corresponding to the Burau representation contains the image of $\mathbb F_2$, namely a triangle group according to \cite{FK}.

When  $g=2$ and $i=p-3$  we consider the image of $\pi_1(\Sigma_{1,2})$ by 
the quantum representation of $\Gamma_{2}^1$, where $\Sigma_{1,2}\subset \Sigma_{2,1}$ is the complementary of a one holed torus with boundary label $2$. 
Then $\Gamma_{1,2}$ acts on $W_{1,p, (2,p-3)}$ which has dimension $3$ and 
an explicit calculation shows that the image of $\pi_1(\Sigma_{1,2})$ is infinite non-abelian. 
 \end{proof}

Let now $\Gamma\to H_i$, $i=1,\ldots,m$, 
be a collection of representations of the group $\Gamma$.  
The subgroup $H\subset \prod_{i=1}^mH_i$ is called 
$\Gamma$-diagonal, if there exists a partition $A_1,\ldots,A_s$ of 
$\{1,2,\ldots,m\}$ such that: 
\begin{enumerate}
\item All factors $H_i$, with $i\in A_t$, $1\leq t\leq s$ are 
equivalent as  representations of $\Gamma$.  Pick up some $i_t\in A_t$.  
Given some  intertwining isomorphisms 
 $L_{j,i_t}: H_j\to H_{i_t}$,  $j\in A_t\setminus\{i_t\}$, we set:  
\[H_{A_t}=\{(x,  
(L_{j,i_t}(x))_{j\in A_t\setminus\{i_t\}}), \;\;  
x\in H_{i_t}\},\]    
which is the graph of  the 
homomorphism $\oplus_{j\in A_t\setminus\{i_t\}} L_{j,i_t}$.
\item Then there exist intertwining isomorphisms  as above with the property 
that the group $H$ contains $\prod_{1\leq t\leq s} H_{A_t}$. 
In particular, if all representations $H_i$ of $\Gamma$ are 
pairwise inequivalent, then 
$H=\prod_{i=1}^mH_i$.
\end{enumerate}

We then have the following Hall lemma from \cite{Ku}: 

\begin{lemma}[\cite{Ku}]\label{Hall-lemma}
Let $\Gamma$ be a subgroup of  the product $\prod_{i=1}^mH_i$ of the 
adjoint simple (i.e. connected, without center and whose Lie algebra 
is simple) Lie groups $H_i$.  
Assume that the projection of $\Gamma$ on each factor $H_i$ 
is Zariski dense. Then the Zariski closure of $\Gamma$ 
in  $\prod_{i=1}^mH_i$ is a $\Gamma$-diagonal subgroup.  
\end{lemma}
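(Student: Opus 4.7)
The plan is a Goursat-type induction on $m$ applied to the Zariski closure $G := \overline{\Gamma}^{\mathrm{Zar}} \subset \prod_{i=1}^m H_i$. Adjoint real simple Lie groups are linear algebraic, so I may treat each $H_i$ as a real algebraic group and use that its closed normal subgroups are trivial or the whole group. First I would check that each projection $p_i : G \to H_i$ is surjective: the image of a morphism of algebraic groups is closed, and $p_i(G)$ contains the Zariski-dense subgroup $p_i(\Gamma)$, hence equals $H_i$. The same applies to the projection $p_J(G)$ onto any sub-product indexed by $J \subset \{1,\ldots,m\}$.

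For the base case $m = 2$, I consider $N_1 := \{h \in H_1 : (h,e) \in G\}$, which is closed and normal in $H_1 = p_1(G)$. By simplicity, either $N_1 = H_1$, in which case $H_1 \times \{e\} \subset G$ and combining with $p_2(G) = H_2$ yields $G = H_1 \times H_2$; or $N_1 = \{e\}$, so $p_1|_G$ is an isomorphism of algebraic groups and $G$ is the graph of an algebraic isomorphism $L := p_2 \circ p_1^{-1} : H_1 \to H_2$. In the second case, $L(p_1(\gamma)) = p_2(\gamma)$ for every $\gamma \in \Gamma$ by construction, so $L$ intertwines the two representations of $\Gamma$, as required by the definition of $\Gamma$-diagonal.

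For the induction step, set $K := \{h \in H_m : (e,\ldots,e,h) \in G\} \trianglelefteq H_m$. If $K = H_m$ then $G = p_{<m}(G) \times H_m$ and the inductive hypothesis applied to $p_{<m}(G)$ yields the partition of $\{1,\ldots,m-1\}$, to which I append the singleton block $\{m\}$. If $K = \{e\}$, I apply the base case to each pairwise projection $p_{\{i,m\}}(G)$ with $i < m$ to decide whether $i$ and $m$ lie in the same block: the pair is either all of $H_i \times H_m$, or the graph of an isomorphism, and this is the candidate equivalence relation.

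The step I expect to be most delicate is the coherence of the isomorphisms $L_{j,i_t}$ within a single block. Pairwise analysis furnishes a candidate relation on indices, but for three indices $i,j,m$ all declared equivalent I need to verify that $p_{\{i,j,m\}}(G)$ is a diagonal graph whose coordinate isomorphisms satisfy the compatibility $L_{i,j} = L_{m,j} \circ L_{i,m}^{-1}$, so that a single graph $H_{A_t}$ accommodates all these indices and actually contains the image of $\Gamma$. This reduces to another Goursat analysis on the triple product: the graph hypothesis on each pair forces the pairwise kernels of $p_{\{i,j,m\}}(G)$ to be trivial, and the triangular commutativity of the three isomorphisms then follows automatically. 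Transitivity of the equivalence relation is a consequence, and the intertwining of each $L_{j,i_t}$ with the $\Gamma$-action is inherited from the fact that the $L$'s are recovered from projections of elements of $G \supset \Gamma$.
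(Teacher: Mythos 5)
The paper itself gives no proof of this lemma (it is quoted from \cite{Ku}), so your argument has to stand on its own; the Goursat-plus-induction strategy is the right one, but the induction step has a genuine gap. In the case $K=\{e\}$ you only extract from the pairwise projections $p_{\{i,m\}}(G)$ a \emph{candidate} block for the index $m$ and then discuss coherence of the intertwiners; nowhere do you prove the actual conclusion, namely that $G$ contains the product of block diagonals $\prod_t H_{A_t}$ (equivalently, you never rule out the configuration ``$K=\{e\}$ but every pair $p_{\{i,m\}}(G)$ is full'', which is incompatible with the desired containment $G\supseteq \cdots\times H_m$). Pairwise information alone cannot settle this: for an abelian group $A$ the subgroup $\{(a,b,ab)\}\subset A^3$ meets the third factor trivially and surjects onto every pair of factors, yet contains no product of diagonals. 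So simplicity must be used a second time, beyond the pairwise Goursat analysis, and your triple-Goursat coherence check does not substitute for it. The standard way to close the step is: when $K=\{e\}$ the map $p_{<m}|_G$ is injective, hence $G\cong p_{<m}(G)$, which by the inductive hypothesis (together with the easy upper bound that every element of $G$ lies on each pairwise graph) \emph{equals} a product of block diagonals, i.e. is isomorphic to a product of adjoint simple groups; the surjection $p_m\circ (p_{<m}|_G)^{-1}$ onto the simple group $H_m$ must then kill all blocks but one, because closed normal subgroups of a product of adjoint simple groups are subproducts. This is precisely what shows that some pair $\{i,m\}$ is a graph and that $G$ contains the diagonal of the enlarged block; that factorization lemma (or an equivalent) is the missing ingredient in your sketch.

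Two smaller points. In the base case, $N_1=\{h\in H_1:(h,e)\in G\}$ is the kernel of $p_2|_G$, not of $p_1|_G$, so ``$N_1=\{e\}$, hence $p_1|_G$ is an isomorphism'' is not justified as written: you must also bring in $N_2=\{h\in H_2:(e,h)\in G\}$, note it is closed and normal in $H_2$, and observe that $N_2=H_2$ would force $G=H_1\times H_2$ and hence $N_1=H_1$; only then does $N_1=\{e\}$ give $N_2=\{e\}$ and the graph conclusion. On the other hand, the part you flagged as most delicate is in fact immediate: if the pairs $\{i,j\}$, $\{j,m\}$, $\{i,m\}$ are graphs of $L_{ij},L_{jm},L_{im}$, then for every $g\in G$ one has $L_{im}(p_i(g))=p_m(g)=L_{jm}(L_{ij}(p_i(g)))$, and since $p_i(G)=H_i$ this yields $L_{im}=L_{jm}\circ L_{ij}$ outright; transitivity and the $\Gamma$-intertwining property follow the same way, with no further Goursat analysis needed.
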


We will use next the following classical result of Dieudonn\'e (\cite{Di}) and Rickart (\cite{Ri}, Thm. 4.3):  
\begin{proposition}[\cite{Di,Ri}]\label{Dieu-Rick}
Any group isomorphism $L: U(W_1)\to U(W_2)$ between the unitary groups of Hermitian 
vectors spaces $W_1$ and $W_2$ has the form: 
\[ L(h)  =\chi(h)\cdot V^{-1} h V\]   
where the map   $V:W_1\to W_2$  is either 
linear or anti-linear, and $\chi: U(W_1)\to U(1)$ is a homomorphism. 
\end{proposition}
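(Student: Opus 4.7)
The plan is to reduce the statement to the fundamental theorem of projective geometry in its Hermitian form, and then recover the scalar ambiguity $\chi$ by an elementary lift.

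\textbf{Reduction to the projective level.} The center of $U(W_i)$ consists of scalar operators $\lambda\cdot\mathrm{Id}$ with $\lambda\in U(1)$, so $Z(U(W_i))\cong U(1)$. Any group isomorphism sends center to center, so $L$ induces an isomorphism $\bar L: PU(W_1)\to PU(W_2)$ at the projective level. It suffices to prove that $\bar L$ has the form $[h]\mapsto [V^{-1}hV]$ for some $\C$-linear or antilinear $V$; one then defines $\chi(h):=L(h)\,(V^{-1}hV)^{-1}\in U(1)$ and checks that $\chi$ is a homomorphism by comparing $L(hh')=\chi(hh')\,V^{-1}(hh')V$ with $L(h)L(h')=\chi(h)\chi(h')\,V^{-1}hV\cdot V^{-1}h'V$ in $U(W_2)$.

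\textbf{Reconstruction of the projective space.} Following Dieudonn\'e, one singles out group-theoretically a distinguished conjugacy-invariant family of elements of $U(W_i)$ whose combinatorics encode $P(W_i)$. A convenient choice is the family of unitary transvections (pseudo-reflections of minimal type): these are characterised abstractly by the dimension of their fixed-point spaces and by their commutation pattern, and each is supported on a unique line of $P(W_i)$, with two transvections commuting iff their supporting lines coincide or are orthogonal. The isomorphism $\bar L$ carries this family bijectively onto the corresponding family in $U(W_2)$ preserving the commutation graph, whence a bijection $P(W_1)\to P(W_2)$ preserving incidence and orthogonality. The fundamental theorem of projective geometry then produces a semilinear bijection $V:W_1\to W_2$, additive with $V(\lambda x)=\sigma(\lambda)V(x)$ for some field automorphism $\sigma$ of $\C$, inducing this bijection. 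Compatibility of $V$ with the Hermitian structures forces $\sigma$ to be continuous, so $\sigma$ is either the identity or complex conjugation, i.e.\ $V$ is linear or antilinear.

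\textbf{Unitarity of conjugation by $V$.} It remains to verify that $h\mapsto V^{-1}hV$ actually lands in $U(W_2)$. Since $L$ maps $U(W_1)$ onto $U(W_2)$, the pullback form $V^*H_2$ on $W_1$ is invariant under $U(W_1)$; by a Schur-type argument, any $U(W_1)$-invariant Hermitian form is a positive real multiple of $H_1$. Rescaling $V$ turns it into an isometry (possibly antilinear), so conjugation by $V$ preserves unitarity, and the reduction step above completes the proof.

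\textbf{The main obstacle} is the reconstruction in the second step: one must pin down algebraically a family of unitary elements rich enough to recover the full incidence and orthogonality geometry of $P(W_i)$, and verify that the low-dimensional exceptional cases of the Dieudonn\'e/Rickart classification (essentially $\dim W_i\le 2$) do not arise for the spaces $W_{g,p,(i)}$ to which the proposition will be applied. Once this is in place, the remaining steps are routine unwinding.
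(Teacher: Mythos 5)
The paper does not prove this proposition; it is invoked purely as a citation of Dieudonn\'e \cite{Di} and Rickart \cite{Ri}, so there is no ``paper's proof'' to compare against. Your sketch does follow the strategy of those references (pass to the projective level, reconstruct the projective Hermitian geometry from a group-theoretically distinguished family of elements, invoke the fundamental theorem of projective geometry, then re-identify the twist $\sigma$ and the scalar $\chi$), and the reduction and reconstruction steps are in the right spirit; but one point in the middle step needs correction, because as written it fails in exactly the case of interest.

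You propose to reconstruct $P(W_i)$ from the family of ``unitary transvections.'' Transvections are unipotent and are supported on isotropic vectors; when the Hermitian form on $W_i$ is definite (which is the situation for the unitary group of $H_{A_p^2}$ in the paper), the unitary group is compact, contains no unipotent elements, and has \emph{no} transvections at all. The distinguished elements one must use in Dieudonn\'e's and Rickart's arguments for (pseudo-)unitary groups are instead the pseudo-reflections, i.e.\ elements that fix a Hermitian hyperplane pointwise and act by a non-trivial scalar on a complementary line (in the compact case one can restrict to the order-two involutions among these). Your parenthetical ``(pseudo-reflections of minimal type)'' is the right object; the word ``transvection'' is not, and taken literally your construction is vacuous exactly where the proposition is applied. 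With that substitution the commutation criterion you state (two commute iff their supporting lines coincide or are $H_i$-orthogonal) is correct, and the rest goes through. One further remark on the field automorphism: the reason $\sigma$ must be $\mathrm{id}$ or conjugation is slightly cleaner than ``continuity'': because the reflection-commutation graph encodes $H_i$-orthogonality, $V$ carries the Hermitian polarity of $P(W_1)$ to that of $P(W_2)$, which forces $\sigma$ to commute with complex conjugation, hence to restrict to an automorphism of $\R$, hence to be trivial on $\R$; the only such automorphisms of $\C$ are the identity and conjugation. The Schur-lemma rescaling at the end is fine (the sign of the scalar is immaterial since $U(H)=U(-H)$).
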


\begin{lemma}
Let $A$ and $B$ be primitive $2p$-th roots of unity, for odd $p$. 
 If $\ro_{p, A^2, (p-3)}|_{\widetilde{\pi_g}}$ and $\ro_{p, B^2, (p-3)}|_{\widetilde{\pi_g}}$ are linearly or anti-linearly equivalent, then either $A=B$ or $A=\overline{B}$.  
\end{lemma}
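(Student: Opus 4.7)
The plan is to promote the intertwiner $V$ from $\widetilde{\pi_g}$ to the full central extension $\widetilde{\Gamma_g^1}$, and then to apply the resulting equivalence to a Dehn twist, whose eigenvalues under the quantum representation are explicit powers of $A$ by the formula at the end of Section~\ref{tqft}. A Galois-type argument using that $A, B$ are primitive $2p$-th roots of unity then pins down $A = B$ or $A = \bar B$.

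\textbf{Extension step.} For $h \in \widetilde{\Gamma_g^1}$ and $x \in \widetilde{\pi_g}$, normality gives $hxh^{-1} \in \widetilde{\pi_g}$; applying the given intertwining relation at $hxh^{-1}$ and rearranging shows that the linear endomorphism
\[
U(h) := \ro_{p,A^2,(p-3)}(h)^{-1}\,V^{-1}\,\ro_{p,B^2,(p-3)}(h)\,V
\]
commutes with $\ro_{p,A^2,(p-3)}(x)$ for every $x \in \widetilde{\pi_g}$. By Proposition~\ref{density} this image is Zariski dense in the Lie group $S\mathbb{U}_{g,p,A^2,(p-3)}$, whose Lie algebra is simple, so Schur's lemma forces $U(h) = \lambda(h) I$ for a character $\lambda\colon \widetilde{\Gamma_g^1} \to \mathbb{C}^*$. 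By construction $\lambda$ is trivial on $\widetilde{\pi_g}$, hence factors through $\Gamma_g$. For $g \geq 3$ the group $\Gamma_g$ is perfect, so $\lambda \equiv 1$; for $g = 2$, $\lambda$ takes values in the group of $10$-th roots of unity, but intersecting with the $2p$-th roots of unity (for $p > 5$) restricts $\lambda$ to $\{\pm 1\}$, and the value $-1$ is excluded because $-1 = B^p$ would require $c(c+2) \equiv p \pmod{2p}$ for some $c \in \mathcal{C}_p$, which is impossible as $c(c+2)$ is a multiple of $4$ whereas $p$ is odd. The antilinear case is analogous, with $V \ro_{p,A^2,(p-3)}(h) V^{-1} = \lambda(h) \ro_{p,B^2,(p-3)}(h)$ and the same conclusion $\lambda \equiv 1$.

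\textbf{Eigenvalue analysis and conclusion.} Apply the extended equivalence to the canonical lift $\widetilde{T_\gamma}$ of a Dehn twist along a non-separating simple closed curve $\gamma$, in a TQFT basis compatible with a trivalent graph containing an edge $e$ dual to $\gamma$. By the twist formula, $\ro_{p,A^2,(p-3)}(\widetilde{T_\gamma})$ is diagonal with eigenvalue $A^{c(c+2)}$ on the subspace where $e$ carries color $c \in \mathcal{C}_p$, of dimension $d_c := \dim W_{g-1,p,(c,c,p-3)}$. In the linear case we obtain the multiset identity
\[
\{\,A^{c(c+2)} \text{ with multiplicity } d_c : c \in \mathcal{C}_p\,\} = \{\,B^{c(c+2)} \text{ with multiplicity } d_c : c \in \mathcal{C}_p\,\},
\]
and the antilinear case gives the same with $\bar A^{c(c+2)}$ on the left (since an antilinear conjugation takes each eigenvalue to its complex conjugate). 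The map $c \mapsto c(c+2) \bmod 2p$ is injective on $\mathcal{C}_p$: from $(c-c')(c+c'+2) \equiv 0 \pmod{2p}$, since $c+c'+2$ is even and $p$ is an odd prime, we infer $p \mid c-c'$, which forces $c = c'$ as $|c-c'| < p$. Choosing $\gamma$ so that the multiplicities $d_c$ distinguish the colors (which, for large enough $p$, can be verified from the Verlinde formula for $W_{g-1,p,(c,c,p-3)}$), the multiset identity reduces to $A^{c(c+2)} = B^{c(c+2)}$ for every $c$ (resp.\ $\bar A^{c(c+2)} = B^{c(c+2)}$). Taking $c = 2$: $A^8 = B^8$ (resp.\ $(AB)^8 = 1$), so $A/B$ (resp.\ $AB$) lies in the intersection of the groups of $2p$-th and $8$-th roots of unity $= \{\pm 1\}$, as $\gcd(2p,8) = 2$. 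The sign $-1$ is excluded because $-B = B^{p+1}$ has order $2p/\gcd(p+1,2p) = p$ and is therefore not a primitive $2p$-th root of unity. Hence $A = B$ in the linear case and $A = \bar B$ in the antilinear case.

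\textbf{Main obstacle.} The technically delicate point is verifying that, for large enough primes $p$, the Verlinde dimensions $d_c$ break the symmetry of the eigenvalue multiset sufficiently to force the termwise equality $A^{c(c+2)} = B^{c(c+2)}$ from the multiset identity. If residual symmetries persist for one choice of $\gamma$, one must combine constraints from Dehn twists along curves of different topological types (non-separating, separating of various partition shapes) to rule out all nontrivial permutations; this combinatorial bookkeeping is the main burden of the proof.
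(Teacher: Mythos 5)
There is a genuine gap, and it sits exactly where you yourself flag the ``main obstacle'': your passage from the multiset identity to the termwise equality $A^{c(c+2)}=B^{c(c+2)}$ rests on the unproved assertion that the multiplicities $d_c=\dim W_{g-1,p,(c,c,p-3)}$ separate the colors $c\in\mathcal C_p$, which you defer to an unspecified verification ``from the Verlinde formula for large enough $p$.'' That separation statement is the actual content of the lemma, and without it (or the alternative bookkeeping over several curve types you allude to) the argument is incomplete at its decisive step. The paper's proof does this work, and does it for a different, better-chosen element: instead of a Dehn twist it uses an element of $\widetilde{\pi_g}$ itself, the point-pushing class $\widetilde{T_{\gamma_+}}\widetilde{T_{\gamma_-}}^{-1}$ of a simple non-separating based loop, acting on $W_{0,(p-3,j,k)}\otimes W_{g-1,(j,k)}$ (with $j+k=p-3$) by $A^{j(j+2)-k(k+2)}$; for these blocks the needed distinctness of $\dim W_{g-1,(j,k)}$, for $j\geq k$ and up to the swap $j\leftrightarrow k$, is proved by a short explicit argument on admissible colorings, and the two surviving matchings (identity and swap) yield exactly the two conclusions $A=B$ and $A=\overline B$.

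Two further consequences of your choice to use a Dehn twist, which lies outside $\widetilde{\pi_g}$, are worth noting. First, you are forced into the extension step, which invokes Proposition \ref{density} and Schur's lemma, hence Zariski density of $\ro_{p,\zeta,(p-3)}(\widetilde{\pi_g})$; that density is only available (through Proposition \ref{mcgdensity}) for large primes $p\equiv 3 \pmod 4$, whereas the lemma is stated for every odd $p$ and the paper's proof uses no density input at all --- only eigenspace bookkeeping for one element of $\widetilde{\pi_g}$. This is not circular within the paper's logic, but it is an unnecessary hypothesis that strictly weakens the statement you prove. Second, your $g=2$ exclusion of a nontrivial character is not correct as written: there is no reason for $\lambda$ to take values in $2p$-th roots of unity (one only gets $\lambda^{10}=1$ from $H_1(\Gamma_2)$ and $\lambda^{N}=1$ from determinants, and $\gcd(10,N)$ can equal $2$), and the sentence about ``$-1=B^p$ requiring $c(c+2)\equiv p \pmod{2p}$'' does not establish anything. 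The sign could in fact be eliminated by comparing the $c=0$ blocks, but only after the multiplicity separation you have not supplied; as it stands both the separation claim and the $g=2$ character argument are gaps.
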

\begin{proof}
According to Proposition \ref{Dieu-Rick}  the two representations in the same unitary group $U(W)$ 
are equivalent only  if there exists an intertwiner  (either linear or anti-linear) map  $V:W\to W$ which conjugates 
the two representations, possibly up to twisting by a character $\chi: U(W)\to U(1)$. 
In our case the representations take values into the special unitary group and hence we can take $\chi=1$. 
 
Observe that $V$ should send eigenspaces for 
$\ro_{p,A^2,(i)}(\gamma)$ to eigenspaces for $\ro_{p,B^2,(i)}(\gamma)$ of the same eigenvalues. 
If $\gamma$ a simple non-separating based loop on the surface, let $\gamma_+,\gamma_-$ denote 
the curves obtained by slightly pushing left and right respectively. Then $\gamma_+,\gamma_-$ and a small 
circle around the base point determine a pair of pants $S_{0,3}$ whose complement  $S_{g-1,2}$ 
is a genus $g-1$ surface with two boundary components. Therefore 
\[ \ro_{p,A^2,(i)}(\widetilde{\gamma})x=A^{j(j+2)-k(k+2)} x, \; {\rm if} \; x\in W_{0,(i,j,k)}\otimes W_{g-1,(j,k)}\]
where the lift $\widetilde{\gamma}\in \widetilde{\pi_g}$ is given by 
$\widetilde{T_{\gamma_+}}\widetilde{T_{\gamma_-}}^{-1}\in \widetilde{\Gamma_g^1}$. 

It follows that $V$ should send vector spaces of the form 
$W_{0,(i,j,k)}\otimes W_{g-1,(j,k)}$ into spaces of the same form associated to possibly different labels.

Consider $i=p-3$. Therefore,  the only possibilities for $j,k$ such that $\dim W_{0,(i,j,k)}$ be non-zero is 
$j=p-3-2m, k=2m$, for some $2m\in \mathcal C_p$. 
Now, observe that the symmetry exchanging the two boundary components induces an isomorphism 
between $W_{g-1,(j,k)}$ and $W_{g-1,(k,j)}$. Further, consider a circle embedded in  $S_{g-1,2}$ 
which bounds a pair of pants along with the two boundary circles.  If $\ell$ is a label for the third circle then 
the set of $p$-admissible $\ell$ for boundary labels $(j,k)$, where $j>k$ is  strictly contained 
in the set of $p$-admissible values of $\ell$ for the boundary labels $(j-2,k+2)$. 
It follows that $\dim W_{g-1,(j,k)}$ are distinct for  all values $j\geq k$, with $j+k=p-3$. 

Therefore either $V$ keeps invariant each subspace  $W_{0,(i,j,k)}\otimes W_{g-1,(j,k)}$ or else 
$V$ sends every $W_{0,(i,j,k)}\otimes W_{g-1,(j,k)}$ onto 
$W_{0,(i,k,j)}\otimes W_{g-1,(k,j)}$. 
Since the corresponding eigenvalues should be the same we derive that 
either $A=B$ or $A=\overline{B}$. 
\end{proof}

{\em End of the proof of Theorem \ref{Zdensity}}. 
The Hall Lemma \ref{Hall-lemma}  shows that the Zariski closure of 
$\rho_{p,(p-3)}(\pi_g)$ is all of $\mathbb P\mathbb G_{p, (p-3)}(\R)$.
Now, using (\cite{Ku}, Lemma 3.6) we obtain that 
$\ro_{p,(p-3)}(\widetilde{\pi_g})$ is Zariski dense in $\mathbb G_{p,(p-3)}(\R)$.  

Finally notice that  $\mathbb G_{g,p,(p-3)}$ contains $\mathbb G_{g-1,p}$  as a subgroup. 
In particular, for $g\geq 4$ each non-compact factor has  rank at least $2$, by \cite{FP2}. 
We can follow the proof of this result in \cite{FP2} for $i=0$ to obtain the result for $g=3$ as well. 
This proves the theorem.

\subsection{Preliminaries on Verlinde formulas}
We start by collecting a few properties of  the dimensions of the space of conformal blocks. 
The main tool is the combinatorial description of the space of conformal blocks which admits a basis 
indexed by the set of $p$-admissible colorings of any uni-trivalent graph  associated to the surface, possibly with 
colored boundary components, as explained in 2.1. As a consequence, if we split a surface  
$S_{g,k}$ by cutting along $r$ essential pairwise non isotopic simple curves into the subsurfaces $S_{h, s+r}$ and  $S_{g-h-r+1, k-s+r}$
then we have a corresponding decomposition for the spaces of conformal blocks: 
\[ W_{g,p, (i_1,\ldots, i_k)}=\sum_{j_1,\ldots j_r\in \mathcal C_p} W_{h, p, (i_1,\ldots, i_s, j_1,\ldots, j_r)}
\otimes W_{g-h-r+1, p, (i_{s+1},\ldots, i_s, j_1,\ldots, j_r)}\]

\begin{lemma}\label{genone}
 \[ \dim W_{1,p,(j,i)}=\frac{(p-1-\max(i,j))(\min(i,j)+1)}{2}\]
 \end{lemma}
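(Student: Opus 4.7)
The plan is to compute $\dim W_{1,p,(j,i)}$ via the combinatorial description recalled in Section~2.1, which gives a basis of the space of conformal blocks indexed by $p$-admissible labelings of any trivalent graph dual to a pants decomposition. First I would cut $S_{1,2}$ along a separating simple closed curve $c$ bounding an embedded pair of pants that contains both marked boundary components, producing the decomposition $S_{1,2}=S_{0,3}\cup_c S_{1,1}$ in which the pair of pants carries boundary labels $(i,j,\ell)$ and the one-holed torus carries boundary label $\ell$, where $\ell$ is the color of $c$. The factorization property of $\mathcal V_p$ then gives
\[
\dim W_{1,p,(j,i)}\;=\;\sum_{\ell\in\mathcal C_p}\dim W_{0,p,(i,j,\ell)}\cdot\dim W_{1,p,(\ell)}\;=\;\sum_{\ell\,:\,(i,j,\ell)\text{ adm.}}\dim W_{1,p,(\ell)},
\]
since $\dim W_{0,p,(i,j,\ell)}$ is $1$ when $(i,j,\ell)$ is $p$-admissible and $0$ otherwise.

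The next step is to evaluate the building block $\dim W_{1,p,(\ell)}$. Using the trivalent graph on $S_{1,1}$ consisting of a single vertex with one self-loop and one external edge labeled $\ell$, this dimension counts the $k\in\mathcal C_p$ such that $(k,k,\ell)$ is $p$-admissible. Combining the triangle bound $2k\geq\ell$, the level bound $2k+\ell\leq 2(p-2)$, and the constraint $k\in\{0,2,\ldots,p-3\}$, a short enumeration gives $\dim W_{1,p,(\ell)}=(p-1-\ell)/2$ for every $\ell\in\mathcal C_p$.

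Finally I would carry out the summation. Writing $a=\min(i,j)$ and $b=\max(i,j)$, both even, the admissible $\ell$ are precisely the even integers in $[\,b-a,\,\min(a+b,\,2(p-2)-a-b)\,]$. Since $p$ is odd while $a+b$ is even, the borderline value $a+b=p-2$ cannot occur, so the range splits cleanly into two regimes: Case~1 with $a+b\leq p-3$, where the triangle bound dominates and there are $a+1$ admissible values of $\ell$, and Case~2 with $a+b\geq p-1$, where the level bound dominates and there are $p-1-b$ admissible values. In each case the sum $\sum_\ell(p-1-\ell)/2$ is an arithmetic progression whose evaluation is routine, and the only thing to verify is that both regimes collapse to the same closed form $(p-1-b)(a+1)/2$. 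This is the main point requiring care, but is purely a mechanical identity between two arithmetic progressions, with no conceptual obstacle beyond careful bookkeeping of the endpoints of summation.
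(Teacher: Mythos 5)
Your proposal is correct: the factorization over the separating curve, the count $\dim W_{1,p,(\ell)}=(p-1-\ell)/2$, and the two-regime summation all check out and yield $(p-1-\max(i,j))(\min(i,j)+1)/2$ in both cases. This is essentially the paper's own argument, since the paper's proof is exactly the "direct computation using the combinatorial description" that you have carried out in detail.
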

 \begin{proof}
Direct computation using the combinatorial description of the vector space. 
\end{proof}

\begin{lemma}\label{gentwo}
For $k\in \mathcal C_p$ we have: 
\[ \dim W_{2,p (k)}=
\frac{1}{24}\cdot \left((k+1)p^3-\frac{3}{2}k(k+2)p^2+\frac{1}{2}(k^3+3k^2-4)p\right)\]
and, in particular: 
\[ \dim W_{2, p,(p-3)}= \frac{p^3-p}{24}\]
\end{lemma}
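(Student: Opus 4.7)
The plan is to apply the TQFT factorization principle recalled in Section \ref{tqft} to a separating simple closed curve in $\Sigma_{2,1}$. Specifically, cut $\Sigma_{2,1}$ along a simple closed curve $\gamma$ that separates it into two one-holed tori, one of which contains the banded point colored $k$. Summing over $p$-admissible colors $j$ of $\gamma$ yields the decomposition
\[
W_{2,p,(k)} = \bigoplus_{j \in \mathcal{C}_p} W_{1,p,(j,k)} \otimes W_{1,p,(j)},
\]
where $W_{1,p,(j)}$ is the space of conformal blocks on a one-holed torus whose single boundary circle is colored $j$.

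Next I would apply Lemma \ref{genone} to evaluate each factor. A boundary circle colored $0$ is equivalent to capping off by a disk, so setting the second argument in Lemma \ref{genone} to $0$ gives $\dim W_{1,p,(j)} = (p-1-j)/2$. Combined with the formula for $\dim W_{1,p,(j,k)}$ from the same lemma, this produces
\[
\dim W_{2,p,(k)} = \frac{1}{4}\sum_{j \in \mathcal{C}_p} (p-1-\max(j,k))(\min(j,k)+1)(p-1-j).
\]

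The specialization $k = p-3$ is particularly transparent since $p-3$ is the largest element of $\mathcal{C}_p$, so $\max(j,p-3) = p-3$ and $\min(j,p-3) = j$ for every $j \in \mathcal{C}_p$, and $\dim W_{1,p,(j,p-3)}$ collapses to $j+1$. Writing $j = 2m$ with $0 \le m \le (p-3)/2$ reduces the computation to
\[
\dim W_{2,p,(p-3)} = \frac{1}{2}\sum_{m=0}^{(p-3)/2}(2m+1)(p-1-2m),
\]
which the standard identities for $\sum m$ and $\sum m^2$ evaluate to $\frac{p(p^2-1)}{24} = \frac{p^3-p}{24}$.

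For the general $k$, one splits the $j$-sum into the regions $j \le k$ and $j > k$ in order to resolve $\max$ and $\min$ into linear functions of $j$, and then assembles the closed form by collecting the resulting quadratic sums in $m$. The conceptual input is only the TQFT factorization along $\gamma$ together with the genus one dimensions already recorded in Lemma \ref{genone}; the main obstacle is the purely mechanical polynomial bookkeeping needed to recognize the coefficients of $p^3$, $p^2$ and $p$ in the asserted cubic.
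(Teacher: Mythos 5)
Your argument is correct and is exactly the paper's proof: the paper likewise writes $\dim W_{2,p,(k)}=\sum_{j\in\mathcal C_p}\dim W_{1,p,(j)}\cdot\dim W_{1,p,(j,k)}$ (cutting along a separating curve into two one-holed tori, the boundary color $0$ case of Lemma \ref{genone} giving $\dim W_{1,p,(j)}=(p-1-j)/2$) and then expands using Lemma \ref{genone}. Your explicit evaluation at $k=p-3$, where $\dim W_{1,p,(j,p-3)}=j+1$, matches the stated value $\frac{p^3-p}{24}$, and the remaining general-$k$ bookkeeping is the same routine expansion the paper leaves implicit.
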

\begin{proof}
We have 
\[ \dim W_{2,p (k)}=\sum_{j\in \mathcal C_p}\dim W_{1,p, (j)} \cdot \dim W_{1,p, (j,k)}\]
then expand all terms using Lemma \ref{genone}. 
\end{proof}

\begin{lemma}\label{genthree}
\[ \dim W_{3, p, (p-3)}=\frac{1}{5760}\cdot p(p-1)(p-3)(7p^3+28p^2+101p+80)+\frac{1}{24}\cdot (p^3-p)\]
\end{lemma}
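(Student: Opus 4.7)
The plan is to apply the splitting formula stated at the start of Section 3.2 to the surface $S_{3,1}$ with its unique boundary labeled $p-3$. Cutting along a separating simple closed curve that divides $S_{3,1}$ into an $S_{2,1}$ piece and an $S_{1,2}$ piece, the common cut circle carrying a $p$-admissible color $j\in\mathcal C_p$, gives
\[\dim W_{3,p,(p-3)} = \sum_{j\in\mathcal C_p}\dim W_{2,p,(j)}\cdot\dim W_{1,p,(j,p-3)}.\]

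The next step is to simplify $\dim W_{1,p,(j,p-3)}$ using Lemma \ref{genone}. Every $j\in\mathcal C_p=\{0,2,\ldots,p-3\}$ satisfies $j\le p-3$, so $\max(j,p-3)=p-3$ and $\min(j,p-3)=j$, whence
\[\dim W_{1,p,(j,p-3)} = \frac{(p-1-(p-3))(j+1)}{2} = j+1.\]
Substituting this together with the cubic-in-$j$ formula for $\dim W_{2,p,(j)}$ from Lemma \ref{gentwo} yields
\[\dim W_{3,p,(p-3)} = \frac{1}{24}\sum_{j\in\mathcal C_p}(j+1)\Bigl[(j+1)p^3 - \tfrac{3}{2}j(j+2)p^2 + \tfrac{1}{2}(j^3+3j^2-4)p\Bigr].\]

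It remains to evaluate this sum in closed form. Setting $j=2m$ for $m=0,1,\ldots,q$ with $q=(p-3)/2$, the summand becomes a polynomial in $m$ of total degree at most four, so the sum can be computed from the standard power-sum formulas for $\sum_{m=0}^q m^k$, $k\le 4$. Substituting back $q=(p-3)/2$ and collecting the coefficients of $p^3,\,p^2,\,p$ recovers the asserted closed form. The main obstacle is purely computational bookkeeping: one must track the arithmetic carefully in order to recognize the factorization $\tfrac{1}{5760}p(p-1)(p-3)(7p^3+28p^2+101p+80)$ and to isolate the additive correction $\tfrac{1}{24}(p^3-p)$. As a sanity check, both sides are polynomials in $p$ of degree six, so the identity can alternatively be verified by evaluating both sides at seven small odd values of $p$ and applying Lagrange interpolation.
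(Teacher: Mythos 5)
Your proof is correct and follows exactly the paper's route: the same splitting $\dim W_{3,p,(p-3)}=\sum_{j\in\mathcal C_p}\dim W_{2,p,(j)}\cdot\dim W_{1,p,(j,p-3)}$ along a separating curve, the reduction $\dim W_{1,p,(j,p-3)}=j+1$ from Lemma \ref{genone}, and substitution of Lemma \ref{gentwo} followed by power-sum bookkeeping, which is all the paper's (one-line) proof consists of. One caveat worth noting: with the coefficient of $p$ in Lemma \ref{gentwo} exactly as printed, the sum comes out short of the stated value by $\tfrac{p}{24}\bigl(\tfrac{p-1}{2}\bigr)^2=\tfrac{p(p-1)^2}{96}$ (already non-integral at $p=5$, where the true value is $20$), so closing the arithmetic requires the corrected term $\tfrac12(k^3+3k^2-2)p$ in that lemma --- a typo in the paper rather than a flaw in your argument, which the paper's own proof inherits equally.
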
 
\begin{proof}
This is a consequence of Lemmas \ref{genone} and \ref{gentwo} along with 
\[ \dim W_{3, p, (p-3)}=\sum_{j\in \mathcal C_p}\dim W_{2, p, (j)} \cdot \dim W_{1,p, (j,p-3)}\]
\end{proof}

\begin{lemma}\label{compare}
We have $\dim W_{g, p, (p-3)} >\dim W_{g, p, (0)}$, if $g \geq 3$. 
\end{lemma}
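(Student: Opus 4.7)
The plan is to apply the gluing formula along a simple closed curve separating the marked point (contained in a one-holed torus) from a subsurface of genus $g-1$ with one boundary component:
\[
\dim W_{g,p,(k)} = \sum_{j \in \mathcal{C}_p} \dim W_{g-1,p,(j)} \cdot \dim W_{1,p,(j,k)}.
\]
Since $p-3$ is the largest color in $\mathcal{C}_p$, Lemma \ref{genone} immediately gives $\dim W_{1,p,(j,p-3)} = j+1$ and $\dim W_{1,p,(j,0)} = (p-1-j)/2$, so subtracting yields
\[
\dim W_{g,p,(p-3)} - \dim W_{g,p,(0)} = \tfrac{1}{2}\sum_{j \in \mathcal{C}_p} \dim W_{g-1,p,(j)}\,(3j - p + 3).
\]
The goal reduces to showing this weighted sum is positive for $g \geq 3$. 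The weight $3j-p+3$ is negative for $j<(p-3)/3$ and positive for $j>(p-3)/3$, so positivity depends on how much $\dim W_{g-1,p,(j)}$ concentrates on large $j$.

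For $g-1 = 1$, where $\dim W_{1,p,(j)} = (p-1-j)/2$, a direct computation via power-sum identities shows that the weighted sum vanishes identically; this encodes the equality $\dim W_{2,p,(p-3)} = \dim W_{2,p,(0)}$ visible from Lemma \ref{gentwo}. For $g \geq 3$, I would iterate the gluing once more, writing $\dim W_{g-1,p,(j)} = \sum_k \dim W_{g-2,p,(k)} \dim W_{1,p,(k,j)}$ and interchanging the order of summation to obtain
\[
\dim W_{g,p,(p-3)} - \dim W_{g,p,(0)} = \tfrac{1}{2}\sum_{k \in \mathcal{C}_p} \dim W_{g-2,p,(k)} \cdot S(k),\qquad S(k) := \sum_{j \in \mathcal{C}_p} \dim W_{1,p,(k,j)}(3j-p+3).
\]
The $g=2$ calculation above says precisely $S(0) = 0$, so the task becomes proving $S(k) > 0$ for every $k \geq 2$.

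To evaluate $S(k)$, split the inner sum at $j = k$: for $j \leq k$, Lemma \ref{genone} gives $\dim W_{1,p,(k,j)} = (p-1-k)(j+1)/2$, while for $j \geq k$ it gives $(p-1-j)(k+1)/2$. Each piece becomes a combination of the elementary sums $\sum m$, $\sum m^2$, $\sum m^3$ over $\{0,\ldots,(p-3)/2\}$, and the result should factor into a manifestly positive polynomial in $k$ and $p$ which vanishes at $k=0$. Once $S(k) > 0$ is established for $k \geq 2$, the conclusion follows, since for $g \geq 3$ the dimensions $\dim W_{g-2,p,(k)}$ are strictly positive for every color $k \in \mathcal{C}_p$ (using $\dim W_{1,p,(k)} = (p-1-k)/2 > 0$ for $g=3$, and positivity of all TQFT dimensions for $g \geq 4$), and in particular at least one index $k \geq 2$ contributes.

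The main obstacle is the bookkeeping in the explicit evaluation of $S(k)$: the piecewise nature of $\dim W_{1,p,(k,j)}$ at $j=k$ and the sign change in the weight $3j-p+3$ require careful handling of boundary terms in the power-sum identities, and one must exhibit the cancellation structure that both forces $S(0)=0$ and produces strict positivity for $k \geq 2$, presumably in the form of a factor $k(k+2)$ or similar that vanishes exactly at $k=0$.
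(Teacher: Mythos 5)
Your proposal is correct, and it closes the one step you left as ``bookkeeping'': carrying out the split of the inner sum at $j=k$ (write $p=2n+1$, $k=2a$, $j=2m$, and use the power sums over $m\le a$ and the telescoping substitution $l=n-m$ for $m>a$) gives the closed form
\[
S(k)\;=\;\sum_{j\in\mathcal C_p}\dim W_{1,p,(k,j)}\,(3j-p+3)\;=\;\frac{p\,k(k+2)(p-1-k)}{8},
\]
which is exactly of the shape you predicted: it vanishes at $k=0$ (recovering the genus-two equality $\dim W_{2,p,(p-3)}=\dim W_{2,p,(0)}$) and is strictly positive for every color $2\le k\le p-3$; since for $g\ge 3$ all $\dim W_{g-2,p,(k)}$ are positive and the color $k=2$ occurs for $p\ge 5$, your identity $\dim W_{g,p,(p-3)}-\dim W_{g,p,(0)}=\tfrac12\sum_k\dim W_{g-2,p,(k)}S(k)$ finishes the proof. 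Your route is genuinely different from the paper's: the paper proves the stronger monotonicity statement $\dim W_{g,p,(k)}\ge\dim W_{g,p,(0)}$ for \emph{all} colors $k$ by induction on the genus, with the base case $g=2$ checked from an explicit genus-two formula and the inductive step obtained by comparing the two gluing sums using the induction hypothesis (a step stated quite tersely there, since the one-holed-torus multiplicities $\dim W_{1,p,(j,k)}$ are not termwise monotone in $k$ for small $j$). Your argument is non-inductive and needs no statement about intermediate colors: it isolates the single difference $\dim W_{g,p,(p-3)}-\dim W_{g,p,(0)}$, expresses it after two gluings as a nonnegative combination with explicit weights $S(k)$, and in return yields an explicit positive lower bound for the deficit in terms of the genus $g-2$ Verlinde numbers, whereas the paper's induction yields the broader comparison against all boundary colors at the price of a more delicate (and less explicit) inductive step.
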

\begin{proof}
We will prove by induction on $g$ that 
 $\dim W_{g, p, (k)} \geq \dim W_{g, p, (0)}$, for any $k\in \mathcal C_p$, with equality only if 
 $k=0$, $g\geq 3$ or $g=2$ and $k=p-3$. 
 
When $g=2$ the explicit formula from Lemma \ref{gentwo} allows for a direct verification. 
Assume that our claim holds true for all genera up to $g$.  We can write from Lemma \ref{genone}:  
\[ \dim W_{g+1,p, (k)}=\sum_{j\in \mathcal C_p} \dim W_{g,p, (j)} \cdot \frac{(p-1-\max(k,j))(\min(k,j)+1)}{2}\]
to be compared with 
\[ \dim W_{g+1,p, (0)}=\sum_{j\in \mathcal C_p} \dim W_{g,p, (0)} \cdot \frac{(p-1-j)}{2}\]
Now the induction hypothesis $\dim W_{g+1, p,(0)} \leq \dim W_{g,p,(j)}$ for all $j\in \mathcal C_p$ 
implies the claim for $g+1$. 
\end{proof}

\begin{lemma}\label{growth}
For any $g\geq 3$, $p\geq 7$ we have 
\[ \dim W_{g+1, p, (p-3)} < \frac{\dim W_{g, p,(p-3)}(\dim W_{g,(p-3)}-1)}{2}\]
Further, for $g=2$ we have the weaker inequality:
\[ \dim W_{3, p, (p-3)} < (\dim W_{2, p,(p-3)})^2\]
\end{lemma}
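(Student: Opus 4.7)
The plan is to exploit the pants decomposition formula from the start of this section: cutting $S_{g+1}$ (with its one banded point colored $p-3$) along a separating curve into $S_{g,1}$ and $S_{1,1,1}$ gives
\[
  \dim W_{g+1,p,(p-3)} = \sum_{j \in \mathcal C_p} \dim W_{1,p,(j,p-3)} \cdot \dim W_{g,p,(j)}.
\]
Since $p-3$ is the largest color in $\mathcal C_p$, Lemma \ref{genone} collapses $\dim W_{1,p,(j,p-3)}$ to $j+1$. Writing $T_g := \dim W_{g+1,p,(p-3)}$ and $D_g(j) := \dim W_{g,p,(j)}$, this reads $T_g = \sum_j (j+1) D_g(j)$; in particular $T_1 = \dim W_{2,p,(p-3)} = (p^3-p)/24$ by Lemma \ref{gentwo}.

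Applying the same decomposition once more to each $D_g(j)$ and interchanging the sums, I obtain $T_g = \sum_k \alpha(k) D_{g-1}(k)$ with $\alpha(k) := \sum_j (j+1) \dim W_{1,p,(j,k)}$. The core computational step is the pointwise identity
\[
  T_1(k+1) - \alpha(k) = \frac{p}{2} \sum_{\substack{j \in \mathcal C_p \\ j < k}} (j+1)(k-j),
\]
which I would derive by splitting $\alpha(k)$ according to whether $j \le k$ or $j > k$ and using the elementary simplification $(p-1-j)(k+1) - (j+1)(p-1-k) = p(k-j)$. The right-hand side vanishes for $k = 0$ and is strictly positive for $k \ge 2$, so $\alpha(k)/(k+1) \le T_1$ with equality only at $k = 0$.

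Summing this pointwise bound against $(k+1) D_{g-1}(k)$ and recognizing $\sum_k (k+1) D_{g-1}(k) = T_{g-1}$, I get $T_g \le T_1 T_{g-1}$, with strict inequality whenever $D_{g-1}(k) > 0$ for some $k \ne 0$ (automatic for $p \ge 7$ since e.g.\ $D_{g-1}(2) > 0$). For $g = 2$, this is exactly the second claim $T_2 < T_1 \cdot T_1 = (\dim W_{2,p,(p-3)})^2$. For $g \ge 3$ I still need $T_1 T_{g-1} \le T_{g-1}(T_{g-1}-1)/2$, i.e.\ $T_{g-1} \ge 2T_1 + 1$; this I would deduce by iterating the trivial lower bound $T_h \ge (p-2) T_{h-1}$ (keep only the $j = p-3$ term in $T_h = \sum_j (j+1) D_h(j)$) starting from $T_2 \ge (p-2) T_1 \ge 5 T_1$, combined with $T_1 \ge 14$ for $p \ge 7$ from Lemma \ref{gentwo}.

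I do not foresee a serious obstacle: the whole argument reduces to an elementary algebraic identity together with a monotonicity bootstrap, and all ingredients live in the preliminaries of this section. The one care point is to recognize why the $g=2$ case genuinely requires the weaker statement — with $T_{g-1} = T_1$, the bound $T_g < T_1 T_{g-1}$ reads $T_2 < T_1^2$ and cannot be sharpened to $T_1(T_1 - 1)/2$ by this argument — and to check that the additional ingredient $T_{g-1} \ge 2T_1 + 1$ becomes available precisely from $g = 3$ onward.
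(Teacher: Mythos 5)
Your proof is correct, but it takes a genuinely different route from the paper. You work entirely with the gluing rule and the genus-one dimension formula: since $p-3$ is the top color, $\dim W_{1,p,(j,p-3)}=j+1$, and your exact identity
\[
T_1(k+1)-\alpha(k)=\frac{p}{2}\sum_{j<k}(j+1)(k-j),\qquad \alpha(k)=\sum_{j\in\mathcal C_p}(j+1)\dim W_{1,p,(j,k)},
\]
checks out (the simplification $(k+1)(p-1-j)-(j+1)(p-1-k)=p(k-j)$ is right, the $j>k$ terms cancel, and equality holds only at $k=0$), giving the uniform multiplicative bound $\dim W_{g+1,p,(p-3)}<\dim W_{2,p,(p-3)}\cdot\dim W_{g,p,(p-3)}$ for all $g\geq 2$; the $g\geq 3$ statement then follows from the easy estimate $\dim W_{g,p,(p-3)}\geq (p-2)\dim W_{2,p,(p-3)}\geq 2\dim W_{2,p,(p-3)}+1$, and the $g=2$ statement is the bound itself. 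The paper instead argues analytically from the Verlinde formula: power-sum estimates on $\alpha_s=\frac{p}{4}\sin\frac{\pi s}{p}$ give $\dim W_{g+1,p,(0)}<(\dim W_{g,p,(0)})^{4/3}$ for $g\geq 4$, which is combined with Lemma \ref{compare}; the case $g=3$ then needs separate crude polynomial upper bounds valid for $p>35$ plus a computer verification for $5\leq p\leq 35$, and $g=2$ is read off from Lemmas \ref{gentwo} and \ref{genthree}. Your argument buys uniformity (no case split at $g=3$, no computer check, and in fact a stronger bound, since $T_1T_{g-1}\ll T_{g-1}(T_{g-1}-1)/2$ once $T_{g-1}$ is large), and your explanation of why $g=2$ only yields the weaker inequality is consistent with the paper's remark that $\dim W_{3,p,(p-3)}/(\dim W_{2,p,(p-3)})^2\to\simeq 0.7$. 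What the paper's cruder route buys is generality: as noted there, it applies verbatim to an arbitrary boundary label $i$, whereas your identity exploits the special feature that $i=p-3$ is the largest color.
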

\begin{proof}
Recall the Verlinde formula (see \cite{GM2}) computing the dimension of the space of conformal blocks: 
\[ \dim W_{g, p,(k)}=\left(\frac{p}{4}\right)^{g-1}\sum_{s=1}^{\frac{p-1}{2}} \sin\left(\frac{(k+1)\pi s}{p}\right)
\sin\left(\frac{\pi s}{p}\right)^{1-2g}\]

Set $\alpha_s=\left(\frac{p}{4}\right)\sin\left(\frac{\pi s}{p}\right)$. 
If $g\geq 4$ we have the following inequalities: 
\[ \sum_{s}\alpha_s^{g} \leq  \sum_s \alpha_s^{4(g-1)/3}<(\sum_s \alpha_s)^{4/3}\]
which imply that:
\[ \dim W_{g+1, p, (0)} < (\dim W_{g,p, (0)}) ^{4/3}, \;  {\rm whenever} \;  g\geq 4\]

We derive from Lemma \ref{compare} that whenever $g\geq 4$ we have: 
\[ \dim W_{g+1, p, (p-3)} <\dim W_{g+2,p,(0)} \leq (\dim W_{g+1, p,(0)})^{4/3} < (\dim W_{g, p, (0)})^{16/9}< (\dim W_{g, p, (p-3)})^{16/9}\] 
On the other hand 
\[ (\dim W_{g, p, (p-3)})^{16/9} < \frac{\dim W_{g, p, (p-3)}(\dim W_{g, p, (p-3)}-1)}{2}\]
if $g\geq 4$ and $p\geq 5$, since $\dim W_{g, p, (p-3)}\geq \dim W_{4, 5, (2)}=75$.   

Eventually, we have to check the case when $g=3$.  From Lemma \ref{gentwo}
\[ \dim W_{2,p,(k)} < \frac{1}{24}\left((k+1)p^3 +\frac{k^3+3k^2}{2}p\right)< \frac{p^3(3p+5)}{48}\]
We have the following crude upper bound:
\[ \sum_{j\in \mathcal C_p}\frac{(p-1-\max(k,j))(\min(k,j)+1)}{2} < \frac{1}{4}p^3\] 
which leads to the upper bounds:
\[ \dim W_{3, p, (k)}< \frac{p^3(3p+5)}{48}\sum_{j\in \mathcal C_p}\frac{(p-1-\max(k,j))(\min(k,j)+1)}{2}<
\frac{p^6(3p+5)}{192}\]
and further
\[ \dim W_{4, p, (k)}< \frac{p^6(3p+5)}{192}\sum_{j\in \mathcal C_p}\frac{(p-1-\max(k,j))(\min(k,j)+1)}{2}<
\frac{p^9(3p+5)}{728}\]
Now, if $p > 35$ we have that 
\[ \dim W_{4, p, (p-3)} < \frac{p^9(3p+5)}{728} < \frac{7p^{12}}{32 \times (5760)^2} 
 < \frac{\dim W_{3,p,(p-3)}(\dim W_{3,p,(p-3)}-1)}{2}\]
The cases when $5\leq p \leq 35$ can be verified by  a direct computer search. 

Finally, the inequality claimed for $g=2$ is a consequence of Lemmas \ref{gentwo} and \ref{genthree}. 
 
Note that the inequality for $g\geq 3$ is actually valid with the same proof for all labels $i$ on the boundary circle. 
\end{proof}
\begin{remark}
The  inequality stated in Lemma \ref{growth} for $g\geq 3$ does not hold when $g=2$. Indeed we have the following asymptotical behavior, derived from Lemma \ref{genthree}: 
\[\lim_{p\to \infty} \frac{\dim W_{3, p, (p-3)}}{(\dim W_{2, p, (p-3)})^2} \simeq 0.7\]  
\end{remark}

\begin{lemma}\label{square}
There exist  only finitely many $p$ such that 
$1+ 8\dim W_{3,p,(p-3)}$ is a perfect square. 
\end{lemma}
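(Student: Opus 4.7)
The plan is to reduce this to a finiteness question for integer points on an elliptic curve. My first step would be to expand $720\cdot\bigl(1+8\dim W_{3,p,(p-3)}\bigr)$ explicitly, using Lemma \ref{genthree}. Expanding $p(p-1)(p-3)(7p^{3}+28p^{2}+101p+80)$ term by term and adding $240(p^{3}-p)$, one finds that the coefficients of $p^{5}$, $p^{3}$ and $p$ all cancel, so that
$$720\bigl(1+8\dim W_{3,p,(p-3)}\bigr)\;=\;7p^{6}+10p^{4}-17p^{2}+720\;=\;720+p^{2}(p^{2}-1)(7p^{2}+17).$$
In particular, the right hand side depends only on $u=p^{2}$: it equals $f(u)=7u^{3}+10u^{2}-17u+720$. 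Consequently, the hypothesis that $1+8\dim W_{3,p,(p-3)}$ equals a perfect integer square $k^{2}$ is equivalent to the statement that the pair $(x,y)=(p^{2},k)$ is an integer point on the affine curve
$$C:\quad 720\,y^{2}\;=\;7x^{3}+10x^{2}-17x+720.$$

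Second, I would verify that the cubic $f(x)=7x^{3}+10x^{2}-17x+720$ has three distinct roots over $\overline{\mathbb{Q}}$. This reduces to a one-line discriminant computation: with $a=7,b=10,c=-17,d=720$ one obtains $\Delta(f)=18abcd-4b^{3}d+b^{2}c^{2}-4ac^{3}-27a^{2}d^{2}\neq 0$. Thus $C$ is smooth and birationally equivalent over $\mathbb{Q}$ to an elliptic curve.

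Third, I would invoke Siegel's theorem on integral points of affine curves of positive genus, which forces $C(\mathbb{Z})$ to be finite. Since any integer $p$ for which $1+8\dim W_{3,p,(p-3)}$ is a square produces a distinct integer point $(p^{2},k)\in C(\mathbb{Z})$, there can be only finitely many such $p$, and in particular only finitely many such primes $p$.

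The main obstacle is the initial algebraic simplification. A priori one is faced with a sextic $720P(p)=R(p)$ whose leading coefficient $7/720$ is not a rational square, so that $y^{2}=R(p)$ defines a hyperelliptic curve of generic genus $2$ and Siegel's theorem could still be applied in principle. What makes the argument clean is the miraculous vanishing of the odd-power terms: it lowers the effective genus from $2$ to $1$ by allowing the substitution $u=p^{2}$, after which the finiteness is just an application of a standard theorem. All subsequent steps (discriminant check, Siegel) are routine.
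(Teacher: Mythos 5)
Your proof is correct, and the key identity checks out: expanding the formula of Lemma \ref{genthree} one does get $720\bigl(1+8\dim W_{3,p,(p-3)}\bigr)=7p^{6}+10p^{4}-17p^{2}+720$ (both sides equal $115920$ at $p=5$ and $847440$ at $p=7$), the odd-degree terms cancel as you say, and the cubic $7u^{3}+10u^{2}-17u+720$ has nonzero discriminant, so Siegel's theorem on integral points finishes the argument. The paper reaches the same conclusion by a slightly different reduction: it never expands the sextic, observing only that $f(p)=1+8\dim W_{3,p,(p-3)}$ is a square-free polynomial of degree $6$, so that $y^{2}=f(x)$ is a hyperelliptic curve of genus $2$, to which it applies Faltings for rational points; in its alternative argument it applies Siegel directly to the sextic, and since the coefficients are only rational it must first restore integrality by the substitutions $p=5q+s$, $s\in\{0,1,2,3,4\}$. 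Your route buys two things: multiplying by $720$ clears denominators at once, so no congruence trick is needed, and the observation that the polynomial is even in $p$ lets you set $u=p^{2}$ and drop from genus $2$ to genus $1$, so only Siegel (not Faltings) is invoked and, since integral points on elliptic curves admit effective (Baker-type) bounds, your version is better suited to extracting the explicit $p_{0}$ alluded to in the paper's introductory remark. What the paper's version buys is economy: it needs no explicit expansion, only the degree and square-freeness of $f$ (for which your discriminant computation is the missing analogue). Both arguments are complete.
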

\begin{proof}
The function $f(p)=1+ 8\dim W_{3,p,(p-3)}$ is a degree 6 square free polynomial in $p$. 
Faltings (see \cite{Fa}) proved that a non-singular algebraic curve of genus at least two 
which is defined over a number field has only finitely many rational points 
(Mordell's conjecture).
Since the projective curve given by the affine equation $y^2=f(x)$ is a hyperelliptic curve 
of genus $2$, the equation $y^2=f(x)$ has therefore only finitely many 
solutions in $\Q$.  

Alternatively, we can use a classical theorem of Siegel, which asserts that a smooth affine algebraic curve defined over $\Q$ of genus at least one has only finitely many points with integral coordinates (see \cite{Lang}, chap. VI). In particular, a 
polynomial with integer coefficients and at least 3 distinct roots takes only finitely many square values 
on the integers. Although $(24)^2(1+ 8\dim W_{3,p,(p-3)})$ has rational coefficients, by considering the change of variable 
$p=5q+s$, for each $s\in\{0,1,2,3,4\}$ we obtain five polynomials  with integer coefficients to each of which Siegel's theorem applies. 
\end{proof}

\subsection{Proof of Proposition \ref{mcgdensity}}\label{proofdensity}
Larsen and Wang in \cite{LW}  proved the topological density of the image of the mapping class group 
of a closed surface of genus $g$. This result corresponds to  the case when $i=0$ and $\zeta=A_p^2$. 
We will show that their proof suitably amended actually works for $i=p-3$ and $g\geq 2$. 
Some of the steps below are valid for every color $i$, but 
for the sake of simplicity we stick to $i=p-3$.  In this section $p$ is an odd prime, $p\geq 5$. 

The start point is the irreducibility of  $\ro_{g,p,\zeta, (i)}$, for any $i$, according to (the proof  given by)  
Roberts (\cite{R}, see also \cite{GM2}, Cor. 3.2).

Consider the topological closure $\mathcal G_{g,p,(i)}$ of  $\ro_{p, A_p^2, (i)}(
\widetilde{\Gamma_{g,1}})$. 
We know from \cite{F}, that when $g\geq 2$, $p\geq 7$ the group $\mathcal G_{g,p,(i)}$ is 
infinite and non-abelian. Denote by $V_{g,p,(i)}$ the representation of 
 $\mathcal G_{g,p,(i)}$ into $\mathbb U_{g,p,\zeta, (i)}$.
 
 If the representation $V_{g,p,(i)}$ were self-dual, its restriction to 
$\widetilde{\Gamma_{g-1,1}\times \Gamma_{1,1}}$ 
 would be a direct sum of self-dual and pairs of dual representations. The 
 invariant subspace $W_{g-1,(0)}\otimes W_{1,(0,i)}$ is  not self-dual 
 (see \cite{LW}, step 10), as $W_{g-1,(0)}$ is not self-dual.  Moreover, it is not dual to 
 $W_{g-1,(j)}\otimes W_{1,(j,i)}$, for any other values of $j$, since these subspaces 
are tensor products of irreducible representations and the corresponding dimensions 
of $W_{1,(j,i)}$ do not agree with that for $j=0$ unless $j=0$. 
  
We wish now to prove our claim by induction on $g$. When $g=2$ we 
choose $i=p-3$.  From above it follows that $\dim W_{2,p,(p-3)}=\frac{p^3-p}{24}$. 
Now the results from (\cite{LW}, section 4)  show that $\rho_{p,A_p^2,(p-3)}(\Gamma_g^1)$ is topologically dense 
into $P\mathbb U_{2,p,(p-3)}$. We can show using the same lines that the result holds for large enough $p$, for any $i$.

Further it follows from \cite{LW} that:
\begin{enumerate} 
\item  the restriction of $V_{g,p,(p-3)}$ to the identity component 
 $\mathcal G_{g,p,(p-3)}^{\circ}$  of $\mathcal G_{g,p,(p-3)}$ is  isotypic.  
 \item For any normal subgroup 
 $H\subset \mathcal G_{g,p,(p-3)}$ with the property that all  morphisms 
 $SL_2(\Z/p\Z)\to \mathcal G_{g,p,(p-3)}/H$ are trivial, the representation of $H$ into 
 $V_{g,p,(p-3)}$ is tensor indecomposable. 
 \item Moreover, $V_{g,p,(p-3)}$ is irreducible as a 
 $\mathcal G_{g,p,(p-3)}^{\circ}$-representation. 
\end{enumerate}

Further the content of Lemma \ref{growth} is the extension of (\cite{LW}, Lemma 12) to the case of surfaces with boundary. This is the main condition needed to prove the induction step from $g$ to $g+1$. 
It actually works for all $g\geq 3$, except for $g=2$. 

When $g=2$ we only obtain that  $\mathcal G_{3,p,(p-3)}^{\circ}$ is a simple compact Lie group of type $A_n$ and the representation $V_{3,p,(p-3)}$ is either the standard one or else the exterior or the symmetric square. 
In particular, if this representation were not the standard one, then $\dim W_{3, p, (p-3)}$ would be of the form $m(m+1)/2$, for some natural number $m\in \{n,n+1\}$. 
This situation could only occur for finitely many $p$, according to Lemma \ref{square}.

Eventually the arguments from (\cite{LW}, steps 14 and 15) show that 
the identity component $\mathcal G_{g,p,(p-3)}^{\circ}$ is a simple compact Lie group and for 
$g\geq 3, p\geq 7$  we have the equality 
$\mathcal G_{g,p,(p-3)}=S\mathbb U_{g,p,(p-3)}$.  
Thus  $\ro_{p, A_p^2, (p-3)}(\widetilde{\Gamma_{g,1}})$ is topologically dense into $S\mathbb U_{g,p,(p-3)}$. 
This implies that  $\ro_{p, \zeta, (p-3)}(\widetilde{\Gamma_{g}^{1}})$ is Zariski dense into  
$S\mathbb U_{g,p,\zeta, (p-3)}$ for all primitive roots of unity $\zeta$.

\subsection{Trace fields} 
Recall that $S\mathbb U_{g,p,(i)}$ is an absolutely almost simple simply connected algebraic group defined over $\Q(\zeta_p+\overline{\zeta_p})$ (i.e. its proper normal algebraic subgroups are finite). 
The adjoint trace field of a subgroup $\Delta\subset S\mathbb U_{g,p,(i)}$ is 
the field $\Q\left({\rm tr}(Ad(x)), x\in \Delta\right)$, where $Ad$ is the adjoint representation of 
$S\mathbb U_{g,p,(i)}$. We have the following extension of  the  corresponding result for mapping class groups of closed surfaces from (\cite{MRe}, section 4.3):
\begin{lemma}\label{tracefield}
Up to rescaling $\rho_{p,(p-3)}$ by some $2p$-th root of unity 
we can insure that the adjoint trace field of $\ro_{p,(p-3)}(\widetilde{\pi_g})$ is $\Q(\zeta_p+\overline{\zeta_p})$. 
\end{lemma}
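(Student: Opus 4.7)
My plan is to adapt the argument of \cite[\S 4.3]{MRe} (which treated the analogous statement for closed surfaces) to the normal subgroup $\widetilde{\pi_g} \subset \widetilde{\Gamma_g^1}$, using the distinguishability of Galois-conjugate representations already established earlier in the paper. Write $k$ for the adjoint trace field. The inclusion $k \subseteq \Q(\zeta_p+\overline{\zeta_p})$ is the easy direction: after rescaling $\rho_{p,(p-3)}$ by a character $\widetilde{\Gamma_g^1} \to \mu_{2p}$ chosen so that the lift $\ro_{p,\zeta,(p-3)}$ takes values in $S\mathbb U_{g,p,\zeta,(p-3)}(\mathcal O_p)$, the adjoint trace of any $x$ in the image equals $|\mathrm{tr}(x)|^2-1$, which is real and lies in $\Q(\zeta_p) \cap \R = \Q(\zeta_p+\overline{\zeta_p})$.

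For the reverse inclusion I argue by contradiction. Suppose $k \subsetneq \Q(\zeta_p+\overline{\zeta_p})$, pick a non-trivial $\sigma \in \mathrm{Gal}(\Q(\zeta_p+\overline{\zeta_p})/k)$, and lift it to $\widetilde\sigma \in \mathrm{Gal}(\Q(\zeta_p)/\Q)$. Applying $\widetilde\sigma$ to matrix entries yields the Galois-conjugate representation $\ro_{p,\widetilde\sigma(\zeta),(p-3)}|_{\widetilde{\pi_g}}$, whose adjoint character coincides with that of $Ad \circ \ro_{p,\zeta,(p-3)}|_{\widetilde{\pi_g}}$ (all adjoint traces are fixed by $\sigma$ by assumption). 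Since semisimple complex representations are determined by their character, the two adjoint representations of $\widetilde{\pi_g}$ are isomorphic; and since $Ad$ is faithful on $PS\mathbb U_{g,p,\zeta,(p-3)}$, there exist a linear or anti-linear intertwiner $V$ and a character $\chi:\widetilde{\pi_g} \to \mu_n$ into the finite center (with $n = \dim W_{g,p,(p-3)}$) satisfying $\ro_{p,\widetilde\sigma(\zeta),(p-3)}(x) = \chi(x)\,V\,\ro_{p,\zeta,(p-3)}(x)\,V^{-1}$ for all $x \in \widetilde{\pi_g}$. Zariski density (Theorem \ref{Zdensity}) is what ensures that Proposition \ref{Dieu-Rick} can be invoked in this form.

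The main obstacle is to rule out the character twist $\chi$ and force $\widetilde\sigma(\zeta) \in \{\zeta, \overline\zeta\}$, contradicting the non-triviality of $\sigma$. The plan is to enhance the earlier lemma of this paper to accommodate a character twist. Its proof analyzed the eigenvalues of the lifts $\widetilde{T_{\gamma_+}}\widetilde{T_{\gamma_-}}^{-1} \in \widetilde{\pi_g}$ acting on the tensor subspaces $W_{0,(p-3,j,k)}\otimes W_{g-1,(j,k)}$, exploiting that the multiplicities $\dim W_{g-1,(j,k)}$ distinguish the unordered pairs $\{j,k\}$ and that the eigenvalues are powers of the primitive $p$-th root of unity $A^{p-1}$. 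Under a twist by $\omega = \chi(\widetilde\gamma) \in \mu_n$ these eigenvalues are uniformly multiplied by $\omega$, so the same matching argument must be carried out modulo a single global scalar; varying $\widetilde\gamma$ over sufficiently many simple non-separating loops and using that $\omega$ is the same scalar for all eigenvalues of a given $\widetilde\gamma$ (while the exponents $A^{(p-1)(p-3-2k)}$ sweep out a large coset of $p$-th roots of unity with distinct multiplicities) still forces $\widetilde\sigma$ to act trivially on $\zeta_p$ up to complex conjugation. Any residual scalar indeterminacy is then absorbed into a global rescaling of $\rho_{p,(p-3)}$ by a $2p$-th root of unity, as in the statement of the lemma.
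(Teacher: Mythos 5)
Your proposal is essentially correct, but it takes a genuinely different route from the paper. The paper argues directly with the trace field $\ell$: Zariski density plus Vinberg's theorem (via \cite{MRe}, Prop.~4.2) show that $S\mathbb U_{g,p,(p-3)}$ is defined over $\ell$ and that $Ad(\ro_{p,(p-3)}(\widetilde{\pi_g}))$ lies in the $\ell$-points of the adjoint group; a Galois--cohomological cocycle argument (using that the abelianization of the image has exponent $p$ while $|{\rm Gal}(\Q(\zeta_p+\overline{\zeta_p})/\ell)|$ divides $\frac{p-1}{2}$, which is coprime to $p$) then produces the central rescaling putting $\ro_{p,(p-3)}(\widetilde{\pi_g})$ inside $S\mathbb U_{g,p,(p-3)}(\ell)$, after which the trace computation of (\cite{MRe}, \S 4.3) forces $\ell=\Q(\zeta_p+\overline{\zeta_p})$. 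You instead argue by contradiction through Galois conjugation of the representation itself, reducing to the paper's earlier eigenvalue--multiplicity lemma for the point-pushing elements $\widetilde{T_{\gamma_+}}\widetilde{T_{\gamma_-}}^{-1}$, enhanced to tolerate a central character twist; this is self-contained modulo that lemma and avoids the descent machinery, while the paper's route is shorter given the citation and moreover yields the extra information (image in the $\ell$-points after rescaling) that its own method needs. Two remarks on your write-up. First, the thinnest step is the passage from equality of adjoint characters to $\ro_{p,\widetilde\sigma(\zeta),(p-3)}(x)=\chi(x)\,V\,\ro_{p,\zeta,(p-3)}(x)\,V^{-1}$: Proposition \ref{Dieu-Rick} literally classifies isomorphisms between unitary groups, not equivalences of adjoint representations, and the conjugate group may be pseudo-unitary; the clean justification is to take the Zariski closure of $\{(\ro_{p,\zeta,(p-3)}(x),\ro_{p,\widetilde\sigma(\zeta),(p-3)}(x))\}$ in the product, note that by simplicity and the character identity it must be the graph of an isomorphism of the adjoint groups (it cannot be the full product, since the adjoint trace difference is a non-zero regular function vanishing on it), and only then invoke the Dieudonn\'e--Rickart type description to get $V$ linear or anti-linear and the central character $\chi$. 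Second, killing the twist is easier than you suggest: on each mirror pair $\{(j,k),(k,j)\}$ the two eigenvalues multiply to $1$, so $\chi(\widetilde{\gamma})^2=1$, and since the untwisted eigenvalues are $p$-th roots of unity with $p$ odd, in fact $\chi(\widetilde{\gamma})=1$; a single non-separating loop then already forces $\widetilde\sigma(\zeta)\in\{\zeta,\overline{\zeta}\}$, so no variation over many loops is needed. Note finally that since $Ad$ kills the center, the rescaling is irrelevant for the adjoint trace field itself, so your argument does deliver the lemma as stated; the rescaling plays a role in the paper only because its method proves the stronger statement about entries lying in the $\ell$-points.
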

\begin{proof}
If $\ell$ denotes the adjoint trace field in the statement then $\ell\subset \Q(\zeta_p+\overline{\zeta_p})$.  
The Zariski density and classical theorems of Vinberg (see \cite{MRe}, Prop.4.2) show that
$S\mathbb U_{g,p,(p-3)}$ is defined over $\ell$ and $Ad(\ro_{p,(p-3)}(\widetilde{\pi_g}))$ is contained in the group 
 $Ad(S\mathbb U_{g,p,(p-3)})(\ell)$  of $\ell$ points of  the adjoint group $Ad(S\mathbb U_{g,p,(p-3)})$. If we show that 
$\ro_{p,(p-3)}(\widetilde{\pi_g}))$ is contained in the group  $S\mathbb U_{g,p,(p-3)}(\ell)$ then the 
argument of (\cite{MRe}, section 4.3) will imply that  $\ell=\Q(\zeta_p+\overline{\zeta_p})$. 

If $Z$ is the center of $S\mathbb U_{g,p,(p-3)}$, then we have an exact sequence 
\[ Z(\Q(\zeta_p+\overline{\zeta_p}))\to S\mathbb U_{g,p,(p-3)}(\Q(\zeta_p+\overline{\zeta_p}))\to 
Ad(S\mathbb U_{g,p,(p-3)})(\Q(\zeta_p+\overline{\zeta_p}))\]

Let $\sigma\in Gal(\Q(\zeta_p+\overline{\zeta_p})/\ell)$. Then we have a homomorphism 
\[f:\ro_{p,(p-3)}(\widetilde{\pi_g})\to Z^1(Gal(\Q(\zeta_p+\overline{\zeta_p})/\ell), Z(\Q(\zeta_p+\overline{\zeta_p}))\]
\[ f(\gamma) (\sigma)=\gamma \sigma(\gamma^{-1}), \; {\rm for }\; \gamma\in \ro_{p,(p-3)}(\widetilde{\pi_g}), \sigma\in Gal(\Q(\zeta_p+\overline{\zeta_p})/\ell)\]
The group of 1-cocycles $Z^1$ is an abelian group and hence $f$ factors through the abelianization 
$H_1(\ro_{p,(p-3)}(\widetilde{\pi_g})$ which is a quotient of $(\Z/p\Z)^{2g}$. 
On the other hand  the group cohomology $H^1(H, Z)$ is killed by the order of the finite group $H$. 
Now,  the order of the group $Gal(\Q(\zeta_p+\overline{\zeta_p})/\ell)$ is a divisor of $\frac{p-1}{2}$. 
Thus elements in the image of the map induced by $f$:  
\[f_*:\ro_{p,(p-3)}(\widetilde{\pi_g})\to H^1(Gal(\Q(\zeta_p+\overline{\zeta_p})/\ell), Z(\Q(\zeta_p+\overline{\zeta_p}))\]
should be killed by both $p$ and some divisor of $\frac{p-1}{2}$ and hence they are trivial in cohomology. 
Therefore there exists $a$ in the center $Z(\Q(\zeta_p+\overline{\zeta_p}))$ such that 
\[ f(\gamma)(\sigma)=a \cdot \sigma(a^{-1})\]
and thus rescaling $\rho_{p,(p-3)}$ by $a$ will insure that $f(\gamma)$ is trivial for every $\gamma$ and 
hence $\ro_{p,(p-3)}(\widetilde{\pi_g}))$ is contained in the group  $S\mathbb U_{g,p,(p-3)}(\ell)$. 
Note now that the center $Z(\Q(\zeta_p+\overline{\zeta_p}))$ consists of scalars which are roots of unity in 
$\Q(\zeta_p+\overline{\zeta_p})$, and thus they are $2p$-th roots of unity. 
\end{proof}

\section{Proofs of the main theorems}
\subsection{Abundance of finite quotients}
We will need the following versions of the 
strong approximation theorem due to Nori-Weisfeiler. First, we record the statement due to Nori for 
algebraic groups defined over $\Q$: 
\begin{theorem}[\cite{No}, Thm.5.4]\label{Nori}
Let $G$ be a connected  linear algebraic group $G$ defined over $\Q$  
and $\Lambda\subset G(\Z)$ be a Zariski dense subgroup. 
Assume that $G(\C)$ is simply connected. Then the 
completion of $\Lambda$ with respect to the congruence  
topology induced from $G(\Z)$ is an open subgroup in 
the group $G(\widehat{\Z})$ of points of $G$ 
over the pro-finite completion $\widehat{\Z}$ of $\Z$.  
\end{theorem}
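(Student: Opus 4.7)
The plan is to follow Nori's strategy via his theory of algebraic envelopes of finite linear groups. First I would reduce to the case where $G$ is absolutely almost simple and semisimple (the unipotent radical and central torus contributions are controlled by classical strong approximation of additive and multiplicative groups). The goal then splits into a local step at each prime $\ell$ and a global step assembling everything inside $G(\widehat{\Z})$.

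For the local step, fix a smooth integral model of $G$ over $\Z[1/N]$ for some $N$. A spreading-out argument shows that for every $\ell \nmid N$, the reduction $\Lambda_\ell \subset G(\mathbf F_\ell)$ of $\Lambda$ remains Zariski dense in $G_{\mathbf F_\ell}$. The heart of the matter is then Nori's theorem on finite linear groups: for $\ell$ large compared to $\dim G$, any subgroup $H \subset GL_n(\mathbf F_\ell)$ admits a canonical connected algebraic envelope $A(H) \subset GL_{n,\mathbf F_\ell}$ satisfying $H^+ = A(H)(\mathbf F_\ell)^+$, where the $+$ decorations denote the subgroups generated by elements of order $\ell$. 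Applied to $H = \Lambda_\ell$, Zariski density forces $A(\Lambda_\ell) = G_{\mathbf F_\ell}$, so $\Lambda_\ell^+ = G(\mathbf F_\ell)^+$. Because $G$ is simply connected, Steinberg's theorem on generation by unipotents (valid in large enough residue characteristic) gives $G(\mathbf F_\ell)^+ = G(\mathbf F_\ell)$, so $\Lambda$ already surjects modulo $\ell$.

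For the global step, let $\overline{\Lambda}$ denote the closure of $\Lambda$ in $G(\widehat{\Z}) \cong \prod_\ell G(\Z_\ell)$. By the local step, $\overline{\Lambda}$ surjects onto $G(\mathbf F_\ell)$ for every $\ell$ outside a finite bad set $S$. A Goursat-type lemma, exploiting that the non-abelian simple quotients of $G(\mathbf F_\ell)$ for distinct large $\ell$ of bounded rank share no common non-trivial finite quotients, upgrades this to surjectivity of $\overline{\Lambda}$ onto the restricted product $\prod_{\ell \notin S} G(\mathbf F_\ell)$. One then lifts from $\mathbf F_\ell$ to $\Z_\ell$ using the fact that for simply connected semisimple $G$ in large residue characteristic, the first congruence kernel $\ker(G(\Z_\ell) \to G(\mathbf F_\ell))$ is a pro-$\ell$ group whose filtration is controlled by the Lie algebra, so any closed subgroup surjecting onto $G(\mathbf F_\ell)$ must equal $G(\Z_\ell)$ by a Frattini argument. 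The finitely many bad primes in $S$ are then absorbed by passing to an open subgroup of finite index.

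The main obstacle I expect is Nori's algebraic envelope theorem itself, whose proof rests on the construction of truncated exponential and logarithm maps in characteristic $\ell$ attaching a connected algebraic group to each finite linear group generated by its unipotent elements; tracking the dependence of the threshold on $\ell$ in terms of $\dim G$ is delicate. Controlling the precise finite set of bad primes (those where the integral model degenerates, where Steinberg generation may fail, or where simply-connectedness reduces poorly) and verifying openness rather than mere finite-index density in each local factor would also require careful arithmetic bookkeeping in a self-contained treatment.
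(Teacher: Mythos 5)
You should first be aware that the paper contains no proof of this statement at all: it is Nori's strong approximation theorem, imported with the citation (\cite{No}, Thm.~5.4) and used as a black box in Proposition \ref{finitequotients}. So there is no internal argument to compare yours with; what you have written is an outline of Nori's own published proof, and at the level of architecture it is the standard one (algebraic envelopes of finite linear groups, generation by elements of order $\ell$, Steinberg generation for simply connected groups, a Goursat argument across distinct primes, and a Frattini-type lifting from $G(\mathbf F_\ell)$ to $G(\Z_\ell)$).

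Several steps, however, are not correct as stated. First, the reduction $\Lambda_\ell\subset G(\mathbf F_\ell)$ is a finite group, so it is never literally ``Zariski dense'' in the positive-dimensional group $G_{\mathbf F_\ell}$, and the key assertion that $A(\Lambda_\ell)=G_{\mathbf F_\ell}$ is not a formal consequence of density of $\Lambda$ plus the envelope theorem: one must in particular exclude the possibility that $\Lambda_\ell$ has order prime to $\ell$ (in which case the envelope is trivial). Establishing that for almost all $\ell$ the subgroup generated by the order-$\ell$ elements of $\Lambda_\ell$ is all of $G(\mathbf F_\ell)^+$ is precisely the content of Nori's Theorem 5.1, and his argument needs Jordan's theorem on prime-to-$\ell$ subgroups of $GL_n(\mathbf F_\ell)$ together with the simple connectedness hypothesis; this is where the real work lies, not only in the envelope theorem itself. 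Second, your reduction step is misstated: strong approximation fails for the multiplicative group (the closure of a Zariski dense group of units in a torus need not be open), so a central torus cannot be ``controlled'' that way; what actually happens is that simple connectedness of $G(\C)$ forces the reductive quotient to be semisimple and simply connected, so no central torus occurs and only the unipotent radical requires the (easy) additive argument. Third, openness in $G(\widehat{\Z})$ also requires the closure of $\Lambda$ in $G(\Z_q)$ to be open at each of the finitely many bad primes $q$; this is true, but it needs its own $q$-adic argument (the Lie algebra of the closure is an ${\rm Ad}(\Lambda)$-invariant, hence ideal, subalgebra of the semisimple Lie algebra, and a proper ideal would force the image of $\Lambda$ in the corresponding quotient group to be finite, contradicting Zariski density), and it is not supplied by the phrase ``absorbed by passing to an open subgroup of finite index.'' With these repairs your sketch does match the Nori/Matthews--Vaserstein--Weisfeiler proof, but as written it would not stand on its own, which is presumably why the paper simply cites Nori.
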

Further, in (\cite{MRe}, Thm. 2.6) Masbaum and Reid  stated the following consequence of 
the approximation theorem stated by Weisfeiler (\cite{Weis}, Thm. 10.5, Cor. 10.6), which is now valid for algebraic groups defined over number fields: 
\begin{theorem}[\cite{MRe}, Thm.2.6]\label{Weisfeiler}
If $\Delta\subset S\mathbb U_{g,p,(i)}(\Q(\zeta+\overline{\zeta}))$ is a Zariski dense subgroup of $S\mathbb U_{g,p,(i)}$ such that the adjoint trace field of $\Delta$ is 
$\Q(\zeta+\overline{\zeta})$, then for all but finitely many primes $\frak p$ in $\Q(\zeta+\overline{\zeta})$ 
the reduction homomorphism $\Delta \to S\mathbb U_{g,p,(i)}(\mathbf F_{\frak p})$ is surjective, where  
$\mathbf F_{\frak p}$ denotes the residue field  $\Q(\zeta+\overline{\zeta})/\frak p$. 
\end{theorem}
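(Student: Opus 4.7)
The plan is to derive this as a direct application of Weisfeiler's strong approximation theorem (\cite{Weis}, Thm. 10.5) to the group $\Delta$ sitting inside the simply connected absolutely almost simple algebraic $k$-group $S\mathbb U_{g,p,(i)}$, where $k=\Q(\zeta+\overline{\zeta})$. The structural prerequisites of that theorem — simple-connectedness and absolute almost simplicity over a number field — are already built into the paper's setup (see the remarks preceding Lemma \ref{tracefield}), so the task reduces to arranging the hypotheses on $\Delta$ itself and then extracting surjectivity from openness of the $\frak p$-adic closure.

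First, I would use the adjoint trace field hypothesis to pin down the field of definition of $\Delta$. By the Vinberg-type theorems invoked in the proof of Lemma \ref{tracefield}, the combination of Zariski density with the equality of the adjoint trace field with $k$ ensures that, after possibly conjugating by a suitable element, $\Delta$ is contained in $S\mathbb U_{g,p,(i)}(\mathcal O_k[1/N])$ for some integer $N$ determined by the denominators of a finite generating set of $\Delta$ (and in all the paper's applications $\Delta$ is finitely generated, being the image of a finitely generated group). At this stage the hypotheses of Weisfeiler's theorem are in place, and it yields openness of the closure of $\Delta$ inside the restricted product $\prod_{\frak p \nmid N} S\mathbb U_{g,p,(i)}(\mathcal O_{k,\frak p})$.

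Second, I would unpack this openness prime by prime. Openness in the restricted product topology means the closure of $\Delta$ contains $\prod_{\frak p \notin S} S\mathbb U_{g,p,(i)}(\mathcal O_{k,\frak p})$ for some finite set $S$ of primes of $k$. Projecting onto the single factor at $\frak p \notin S$ and then reducing modulo $\frak p$ gives the desired surjection $\Delta \twoheadrightarrow S\mathbb U_{g,p,(i)}(\mathbf F_{\frak p})$. The main obstacle I would anticipate is essentially bookkeeping: one must check that the primes of bad reduction, the primes dividing $N$, and the handful of small residue characteristics where $S\mathbb U_{g,p,(i)}$ could fail to remain absolutely almost simple after reduction together form a finite set, so that only finitely many $\frak p$ end up excluded. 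Once this verification is carried out, the surjectivity conclusion is an immediate consequence of the openness given by Weisfeiler's theorem.
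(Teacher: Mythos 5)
Your proposal is correct and follows essentially the route the paper intends: the paper does not prove this statement itself but imports it from \cite{MRe} (Thm.~2.6), where it is derived exactly as you describe, by combining Vinberg's result on the field of definition with Weisfeiler's strong approximation theorem \cite{Weis} and then reading off surjectivity at all but finitely many primes from openness of the congruence closure. Your explicit remark that finite generation of $\Delta$ (automatic in the paper's applications) is what bounds the denominators and puts $\Delta$ into the $S$-integral points is a point the paper's statement glosses over, and it is handled correctly.
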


Our key ingredient in the proofs of the main theorems is the following result showing that infinitely many finite 
groups of Lie type should occur among the quotients of a Burnside-type group: 
\begin{proposition}\label{finitequotients}
Let $g\geq 2$ and  $p\equiv 3 ({\rm mod}\; 4)$, $p$ large enough prime.
Then, for all but finitely many primes $q$ there exist surjective homomorphisms 
$B(\widetilde{\pi_g},p,\mathcal S(S_g))\to \mathbb G_{g,p,(p-3)}(\Z/q^k\Z)$ and 
$B(\pi_g,p,\mathcal S(S_g))\to \mathbb PG_{g,p,(p-3)}(\Z/q^k\Z)$.  Moreover, for infinitely many $q$ 
the finite groups on the right hand side surject onto $PSL(N_{g,p}, \mathbf F_q)$, where $\mathbf F_q$ denotes the finite 
field on $q$ elements and $N_{g,p}=\dim W_{g,p,(p-3)}$. 
\end{proposition}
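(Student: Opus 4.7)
The plan is to deduce the proposition from the Zariski density established in Theorem \ref{Zdensity}, the Koberda--Santharoubane factorization of the quantum representation through the Burnside-type quotient, and the strong approximation theorems of Nori and Weisfeiler recalled just above. By \cite{KS} the restriction of $\rho_{p,\zeta,(p-3)}$ to $\pi_g$ kills $\varphi_*(x^p)$ for every simple closed loop $x$ and every $\varphi_*\in \mathrm{Aut}(\pi_g)$, so this representation factors through $B(\pi_g,p,\mathcal S(S_g))$; lifting to the central extension one obtains the analogous factorization of $\ro_{p,(p-3)}$ through $B(\widetilde{\pi_g},p,\mathcal S(S_g))$. Denote the image of this lift by $\Delta$. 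By Theorem \ref{Zdensity}, $\Delta$ is Zariski dense in $\mathbb G_{g,p,(p-3)}$, and by Lemma \ref{tracefield}, after rescaling $\rho_{p,(p-3)}$ by a suitable $2p$-th root of unity, which only modifies the representation by central scalars and is hence invisible on the Burnside-type quotient, the adjoint trace field of $\Delta$ is all of $\Q(\zeta+\overline{\zeta})$.

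For the first part of the statement I would apply Nori's Theorem \ref{Nori} to $\mathbb G_{g,p,(p-3)}$. This group is connected, semisimple and simply connected over $\Q$, being obtained by restriction of scalars from the simply connected $S\mathbb U_{g,p,\zeta,(p-3)}$. Nori's theorem then asserts that the closure of $\Delta$ in the congruence topology is open in $\mathbb G_{g,p,(p-3)}(\widehat{\Z})$; openness means that for all but finitely many rational primes $q$ the image of $\Delta$ in $\mathbb G_{g,p,(p-3)}(\Z_q)$ coincides with the full group, so it surjects onto every finite quotient $\mathbb G_{g,p,(p-3)}(\Z/q^k\Z)$. Since $B(\pi_g,p,\mathcal S(S_g))$ is the quotient of $B(\widetilde{\pi_g},p,\mathcal S(S_g))$ by the image of the central $\Z^2$ acting by scalars, modding out scalars yields the corresponding surjection onto $\mathbb P\mathbb G_{g,p,(p-3)}(\Z/q^k\Z)$.

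For the final claim about $PSL(N_{g,p},\mathbf F_q)$ I would instead invoke Weisfeiler's Theorem \ref{Weisfeiler}, which gives that for all but finitely many prime ideals $\mathfrak p$ of $\mathcal O_{\Q(\zeta+\overline{\zeta})}$ reduction modulo $\mathfrak p$ maps $\Delta$ onto $S\mathbb U_{g,p,(p-3)}(\mathbf F_{\mathfrak p})$. Among rational primes $q$ I would single out those that split completely in $\Q(\zeta_p)/\Q$; by the Chebotarev density theorem there are infinitely many such $q$. For such $q$ any prime $\mathfrak p$ of $\Q(\zeta+\overline{\zeta})$ above $q$ has residue field $\mathbf F_q$ and splits further in the CM extension $\Q(\zeta_p)/\Q(\zeta+\overline{\zeta})$, so the Galois involution defining the Hermitian form becomes trivial modulo $\mathfrak p$; the form is then hyperbolic over $\mathbf F_q$ and $S\mathbb U_{g,p,(p-3)}(\mathbf F_{\mathfrak p})\cong SL(N_{g,p},\mathbf F_q)$ with $N_{g,p}=\dim W_{g,p,(p-3)}$. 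Passing to the projective quotient yields the required surjection onto $PSL(N_{g,p},\mathbf F_q)$. The step I expect to require the most care is this last identification: one must check that at completely split primes the unitary group really reduces to the split form $SL_{N_{g,p}}$, and that the finitely many exceptional primes from Weisfeiler's theorem and from the rescaling in Lemma \ref{tracefield} can be simultaneously excluded from the infinite Chebotarev family of split primes.
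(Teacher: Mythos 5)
Your proposal is correct and takes essentially the same route as the paper: factor $\ro_{p,(p-3)}|_{\widetilde{\pi_g}}$ through the Burnside-type quotient, apply Nori's Theorem \ref{Nori} to its Zariski dense image in $\mathbb G_{g,p,(p-3)}$ for the congruence quotients, and get the $PSL(N_{g,p},\mathbf F_q)$ quotients from Weisfeiler's Theorem \ref{Weisfeiler} combined with Lemma \ref{tracefield} and rational primes splitting completely in $\Q(\zeta_p)$, exactly as the paper does (citing \cite{Tits1,PR} for the identification $S\mathbb U_{g,p,(p-3)}(\mathbf F_{\frak p})\cong SL(N_{g,p},\mathbf F_q)$ at split primes). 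The only minor divergence is that the paper obtains $k=1$ from Nori and then invokes Serre \cite{Serre} and Vasiu \cite{Vas} to reach all $k\geq 1$, whereas you read off all $k$ directly from the openness of the congruence closure in $\mathbb G_{g,p,(p-3)}(\widehat{\Z})$, which is an equally valid reading of Theorem \ref{Nori}.
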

\begin{proof}
The linear algebraic group $G=\mathbb G_{g,p,(p-3)}$ satisfies 
the assumptions of Nori's Theorem \ref{Nori}.  
If we take $\Lambda$ to be a finite index subgroup of   
$\ro_{p,(p-3)}(\widetilde{\pi_g})$, then Theorem \ref{Nori} implies 
our claim for $k=1$. 

In fact $\ro_{p, (p-3)}|_{\widetilde{\pi_g}}$ factors through $B(\widetilde{\pi_g},p,\mathcal S(S_g))$, since each 
homotopy class of a simple closed curve on $S_g$ is sent into the composition of two commuting Dehn twists in $\widetilde{\Gamma_g^1}$. 
Moreover, the Dehn twist along the boundary curve $c=\partial S_g$ is central in $\Gamma_{g,1}$, 
and hence the image by $\ro_{g,p}$ of the center of $\widetilde{\Gamma_g^1}$ 
consists of central elements of finite order $p$. 

Then a classical result due to 
Serre  (see \cite{Serre}) for  $GL(2)$ 
and extended by Vasiu (see \cite{Vas}) to all reductive 
linear algebraic groups defined over $\Q$ improves the 
surjectivity statement to all $k\geq 1$.  

An alternate approach for $k=1$ would be to use directly the Nori-Weisfeiler approximation theorem on $S\mathbb U_{g,p,(p-3)}$. 
From Lemma \ref{tracefield} the group $\ro_{p,A_p^2,(p-3)}(\widetilde{\pi_g})\subset S\mathbb U_{g,p,(p-3)}$
has trace field $\Q(\zeta+\overline{\zeta})$, up to possibly translating it by a root of unity. 
Therefore, by Theorem \ref{Weisfeiler} for all but finitely many primes $\frak p$ in the trace field the 
reduction mod $\frak p$ is well-defined and provides a surjection $\ro_{p,A_p^2,(p-3)}(\widetilde{\pi_g})\to S\mathbb U_{g,p,(p-3)}(\mathbf F_{\frak p})$. According to the discussion in (\cite{Tits1}, p.55; \cite{PR}, 2.3.3) 
the group $S\mathbb U_{g,p,(p-3)}(\mathbf F_{\frak p})$ is either a special unitary group, when $\frak p$ is prime or ramified 
in $\Q(\zeta)$ or else a special linear group, when $\frak p$ splits completely in $\Q(\zeta)$. 
In particular, if $q$ is a rational prime which splits completely in $\Q(\zeta)$ and $\frak p$ a prime in 
$\Q(\zeta+\overline{\zeta})$ which divides $q$, then  
$S\mathbb U_{g,p,(p-3)}(\mathbf F_{\frak p})$ is isomorphic to $SL(N_{g,p}, \mathbf F_q)$, for all but finitely many $\frak p$.   
 \end{proof}

We now record  the following version of Hall's lemma (see \cite{Hall}) for finite groups, due to Dunfield and Thurston:
\begin{lemma}[\cite{DT}, Lemma 3.7]\label{Hall-finite}
Suppose that we have a set of epimorphisms $f_i:G\to H_i$, where $H_1,H_2,\ldots,H_k$ are 
non-abelian simple groups. If $f_i$ are pairwise non-equivalent, namely there is no isomorphism 
between $\alpha: H_i\to H_j$ such that $\alpha\circ f_i=f_j$, for $i\neq j$, then the map  
\[(f_1,f_2,\ldots,f_k):G\to H_1\times H_2 \times \cdots \times H_k\] 
is surjective.  
\end{lemma}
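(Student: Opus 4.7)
My plan is to proceed by induction on $k$, with the inductive step reduced to Goursat's lemma. The base case $k=1$ holds by the hypothesis that $f_1$ is surjective. For the inductive step, assume the combined map $F=(f_1,\ldots,f_{k-1})\colon G\to A:=H_1\times\cdots\times H_{k-1}$ is surjective. Let $K\subseteq A\times H_k$ be the image of $(F,f_k)\colon G\to A\times H_k$. By the inductive hypothesis and the surjectivity of $f_k$, the subgroup $K$ projects surjectively onto both factors. The goal is to show $K=A\times H_k$.

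Now I would apply Goursat's lemma to this subdirect product: there exist normal subgroups $N_A\trianglelefteq A$, $N\trianglelefteq H_k$, and an isomorphism $\alpha\colon A/N_A\xrightarrow{\ \sim\ } H_k/N$ such that
\[ K\;=\;\{(a,b)\in A\times H_k : \alpha(aN_A)=bN\}. \]
Since $H_k$ is simple, either $N=H_k$ (in which case $K=A\times H_k$ and we are done) or $N=\{e\}$. I will rule out the second possibility by contradiction.

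If $N=\{e\}$, then $\alpha$ identifies $A/N_A$ with $H_k$, so $A/N_A$ is non-abelian simple. Here I invoke the standard structural fact that a normal subgroup of a finite direct product of non-abelian simple groups is itself a sub-product of factors (this follows because, by simplicity and the triviality of centers, distinct factors commute and the minimal normal subgroups of $A$ are precisely the $H_i$). Consequently $N_A=\prod_{i\neq j}H_i$ for some $j\in\{1,\ldots,k-1\}$, and the projection $A\to H_j$ identifies $A/N_A$ with $H_j$. Combining with $\alpha$, we obtain an isomorphism $\alpha\colon H_j\xrightarrow{\ \sim\ } H_k$, and the defining condition for $K$ applied to any element $(F(g),f_k(g))\in K$ reads $\alpha(f_j(g))=f_k(g)$ for every $g\in G$, i.e.\ $\alpha\circ f_j=f_k$. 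This directly contradicts the pairwise non-equivalence hypothesis.

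The main obstacle is therefore the Goursat step: without the twin inputs of simplicity of $H_k$ (to force $N\in\{\{e\},H_k\}$) and the classification of normal subgroups of $\prod_{i<k}H_i$ (to pin down $N_A$ as omitting exactly one factor), the argument would leave open a diagonal embedding; the non-equivalence hypothesis is precisely what is needed to rule out the remaining diagonal case. No serious calculation is required beyond invoking these standard facts.
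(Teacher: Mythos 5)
Your proof is correct: the induction, the Goursat step, the classification of normal subgroups of a finite direct product of centerless simple groups (which needs no finiteness of the $H_i$), and the final contradiction with pairwise non-equivalence all go through as written. The paper itself gives no proof—it quotes the lemma from \cite{DT} (Lemma 3.7)—and your argument is essentially the standard one there, where the case $N=\{e\}$ is handled by viewing $K$ as the graph of a surjection $A\to H_k$ rather than by invoking Goursat explicitly.
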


\begin{proposition}\label{corolar}
For large enough prime $p\equiv 3 ({\rm mod} \; 4)$ and 
$g\geq 2$ the group 
$\widehat{B(\pi_g,p,\mathcal S(S_g))}$ is neither finite-by-solvable, nor solvable-by-finite. 
\end{proposition}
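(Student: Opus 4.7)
The plan is to combine Proposition \ref{finitequotients} with the finite-group Hall Lemma \ref{Hall-finite} to exhibit $\widehat B:=\widehat{B(\pi_g,p,\mathcal S(S_g))}$ as a continuous quotient of a profinite product of pairwise non-isomorphic finite non-abelian simple groups, and then to rule out both virtual-prosolvability conditions for such a target.

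First I would extract from Proposition \ref{finitequotients} an infinite set $Q$ of rational primes $q$ (those splitting completely in $\Q(\zeta_p)$, minus finitely many exceptions) together with surjective homomorphisms
\[ f_q:B(\pi_g,p,\mathcal S(S_g))\twoheadrightarrow PSL(N_{g,p},\mathbf F_q),\quad q\in Q,\]
where $N_{g,p}=\dim W_{g,p,(p-3)}\ge 3$ (easy from Lemma \ref{gentwo} for $g=2$). Because $|PSL(N_{g,p},\mathbf F_q)|$ is strictly increasing in $q$, the targets are pairwise non-isomorphic non-abelian simple groups, so no intertwining isomorphism exists between any two of them and the $f_q$ are pairwise non-equivalent in the sense of Lemma \ref{Hall-finite}. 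That lemma then yields, for every finite $F\subset Q$, surjectivity of the combined map $\prod_{q\in F}f_q$.

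Next, since each $f_q$ factors through a finite quotient of $B(\pi_g,p,\mathcal S(S_g))$, it extends canonically to a continuous surjection $\widehat f_q:\widehat B\twoheadrightarrow PSL(N_{g,p},\mathbf F_q)$. Assembling these I obtain a continuous map
\[ \widehat f:\widehat B\longrightarrow T:=\prod_{q\in Q}PSL(N_{g,p},\mathbf F_q)\]
whose image is closed, as $\widehat B$ is compact, and projects onto every finite subproduct by Hall's lemma, hence equals $T$. This compactness/inverse-limit bridge from the finite version of Hall's lemma to a surjection onto the full infinite profinite product is the only real bookkeeping obstacle; the rest is structural.

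It then suffices to observe that $T$ is neither solvable-by-finite nor finite-by-solvable, and that each of these properties is inherited by continuous quotients. Any closed normal prosolvable subgroup of $T$ must project trivially onto each non-abelian simple factor and is therefore trivial, so $T$ has no open prosolvable subgroup, ruling out the solvable-by-finite case. On the other hand, if $K\triangleleft T$ were finite and normal with $T/K$ prosolvable, then for each $q\in Q$ the simple factor $PSL(N_{g,p},\mathbf F_q)$ would be forced to arise either as a quotient of the prosolvable $T/K$ (impossible, being non-abelian simple) or as a quotient of the finite $K$ (impossible once $q$ is large, since $|PSL(N_{g,p},\mathbf F_q)|\to\infty$). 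Transporting both obstructions up the continuous surjection $\widehat f$ forces the same conclusion for $\widehat B$, finishing the proof.
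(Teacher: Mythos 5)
Your argument is correct and is essentially the paper's own (alternate) proof of this proposition: both combine the quotients from Proposition \ref{finitequotients} with the Dunfield--Thurston form of Hall's lemma (Lemma \ref{Hall-finite}) to map $\widehat{B(\pi_g,p,\mathcal S(S_g))}$ onto a product of pairwise non-isomorphic finite non-abelian simple groups, then use that prosolvable groups have only solvable finite quotients together with the same cardinality argument to exclude a finite normal subgroup with prosolvable quotient. The only differences are cosmetic: the paper's first-listed argument works with the pro-$q$ groups $P\mathbb G_{g,p,(p-3)}(\Z_q)$ instead, and your compactness step passing from finite subproducts to the full product makes explicit a detail the paper leaves implicit.
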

\begin{proof}
For large enough prime $q$ the surjective maps $B(\pi_g,p,\mathcal S(S_g))\to \mathbb PG_{p,(p-3)}(\Z/q^k\Z)$ induce a continuous surjective homomorphism:
$\widehat{B(\pi_g,p,\mathcal S(S_g))}\to P\mathbb G_{p,(p-3)}(\Z_q)$. 

If $\widehat{B(\pi_g,p,\mathcal S(S_g))}$ had a prosolvable normal subgroup of finite index at most $N$, then 
$P\mathbb G_{g,p,(p-3)}(\Z_q)$ would also have a prosolvable normal subgroup of index at most $N$. 
But the index of the largest normal prosolvable group within $P\mathbb G_{g,p,(p-3)}(\Z_q)$  
goes to infinity with $q$.  It is well-known that  $P\mathbb G_{g,p,(p-3)}(\Z/q\Z)$ are 
finite simple groups (\cite{Tits1}, p.55; \cite{PR}, 2.3.3). More precisely, by Proposition \ref{finitequotients} 
we can find infinitely many finite groups of the form $PSL(N_{g,p}, \mathbf F_q)$ and $PU(N_{g,p},\mathbf F_q)$  among these quotients. 
In particular, a normal solvable subgroup of $P\mathbb  G_{g,p,(p-3)}(\Z_q)$ must project to the trivial 
subgroup of  $P\mathbb G_{g,p,(p-3)}(\Z/q\Z)$ and hence has index at least the size of the 
later. This is optimal, as 
\[P\mathbb G_{g,p,(p-3)}(q\Z_q)= \ker(P\mathbb G_{g,p,(p-3)}(\Z_q)\to  P\mathbb G_{g,p,(p-3)}(\Z/q\Z))\] 
is a pro-$q$ group and hence it is prosolvable. Now the size of  $P\mathbb G_{g,p,(p-3)}(\Z/q\Z)$ goes to infinity with $q$. 
Therefore $\widehat{B(\pi_g,p,M)}$ is not virtually prosolvable.

An alternate proof is as follows. 
Since   $P\mathbb G_{g,p,(p-3)}(\Z/q\Z)$ are simple non-abelian groups, for large $q$ they are 
pairwise non-isomorphic. From Hall's Lemma \ref{Hall-finite} we derive that the product homomorphism 
\[\widehat{B(\pi_g,p,\mathcal S(S_g))}\to \oplus_{q\geq m(p)}P\mathbb G_{g,p,(p-3)}(\Z/q\Z)\]  
is surjective. 

According to (\cite{RZ}, Corollary 4.2.4) a prosolvable group has all its finite quotients solvable. 
In our case any finite index normal subgroup of $\widehat{B(\pi_g,p,\mathcal S(S_g))}$ surjects onto infinitely many simple 
groups, and hence it cannot be virtually prosolvable. 

Eventually, suppose that  $\widehat{B(\pi_g,p,\mathcal S(S_g))}$ is solvable-by-finite, namely it 
contains a finite normal subgroup $L$ such that the quotient  $\widehat{B(\pi_g,p,\mathcal S(S_g))}/L$  is prosolvable. 
Then the image of $L$ in every large enough finite simple quotient $P\mathbb G_{g,p,(p-3)}(\Z/q\Z)$ should be 
trivial, as it cannot be the whole group by cardinality reasons. Therefore, $\widehat{B(\pi_g,p,\mathcal S(S_g))}/L$ 
surjects onto infinitely many finite simple groups  $P\mathbb G_{g,p,(p-3)}(\Z/q\Z)$. By the arguments above 
this contradicts the fact that  $\widehat{B(\pi_g,p,\mathcal S(S_g))}/L$  was supposed (virtually) prosolvable. 
\end{proof}

\subsection{Proof of Theorem \ref{prosolvable}} 
The case $m=1$ is settled in Proposition  \ref{corolar} above. 

Now, in order to prove a similar statement for  $B(\pi_g,p,\mathcal S_m(S_g))$, where $m \geq 2$ 
we have to pass to a finite cover of $S_g$. Indeed the  classes of closed immersed based loops in $S_g$ with no more than $m$ self-intersections up to a homeomorphism of $S_g$ form a finite set. Choose a set of based loops $M$ of representatives of this set. 

According to a classical Theorem of Scott (\cite{Sc1,Sc2}), given a based loop $\gamma$ on $S_g$ there exists 
a finite cover $S_h$ of $S_g$ and some $d$ such that the loop $\gamma^d$ lifts to an embedded loop in $S_h$. 
There exists then a finite characteristic cover, say of degree $d$, of pointed surfaces $f:(S_h,\tilde{z})\to (S_g,z)$ 
so that the $d$-th powers of all based loops from $M$ admit simple lifts based at $\tilde{z}$.
It follows that the $d$-th powers of  based loops from $\mathcal S_n(S_g)$ lift 
to simple based loops in $S_h$. 

Observe that the restriction of any automorphism of $\pi_g$ to the (image of) $\pi_h$, viewed as a subgroup, is 
an automorphism of $\pi_h$. This defines a homomorphism $F:\Gamma_g^1\to \Gamma_h^1$.
If $\varphi\in \Gamma_g^1$ is such that $\varphi(x)=x$, for any $x\in \pi_h$, then 
$\varphi(x^d)=x^d$, for any $x\in \pi_g$. Since surface groups are bi-orderable (this goes back to Magnus) 
we have $\varphi(x)=x$ for any $x\in \pi_g$, as a strict inequality for some $x$ would imply a strict inequality for its $d$-th powers.  Therefore $F$ is injective. 

Recall that for any based loop $\gamma$ on $S_g$ we have $f(f^{-1}(\gamma))=\gamma^d\in \pi_1(S_g,z)$, as the loop $\gamma$ is traveled $d$-times. If $\gamma\in \mathcal S_m(S_g)$, there exists some simple lift 
$\widetilde{\gamma}$ based at $\tilde{z}$. It follows that $f(\widetilde{\gamma})=\gamma^{m(\gamma)}\in \pi_1(S_g,z)$, where $m(\gamma)$ is a divisor of $d$. 

Denote by $ad_{S_g,\gamma}$ the action by conjugacy by $\gamma$, namely the image of 
$\gamma$ into $\Gamma_g^1=\Aut(\pi_g)$. 
As $\gamma^d$ belongs to the image of $\pi_h$ we can compute: 
\[ F(ad_{S_g, \gamma^d})=ad_{S_h, \widetilde{\gamma}^{d/m(\gamma)}}\] 
It follows that the image by $F$ of the group $\mathcal S_m(S_g)[nd]$ is contained into $\mathcal S(S_h)[n]$. 

Although $F(\pi_g)$ is not contained into $\pi_h$, it contains $\pi_h$ of finite index dividing $d$ since 
for any element $\gamma\in \pi_g$ its image $F(ad_{S_g, \gamma})^d\in \pi_h$. 

Further the map $F$ induces a homomorphism 
\[\overline{F}: B(\pi_g,nd,\mathcal S_m(S_g))\to \Gamma_h^1/F(\mathcal S_m(S_g)[nd])\]
Now, the subgroup $\pi_h/F(\mathcal S_m(S_g)[nd])$ is of finite index into the image 
$\overline{F}(B(\pi_g,nd,\mathcal S_m(S_g)))$.  As 
\[F(\mathcal S_m(S_g)[nd])\subset \mathcal S(S_h)[n]\subset \pi_h\]
and $\pi_h$ is a normal subgroup in $\Gamma_h^1$, the group 
$\pi_h/F(\mathcal S_m(S_g)[nd])$ surjects onto the Burnside-type group 
$B(\pi_h, n, \mathcal S(S_h))$. 

It follows that the group $B(\pi_g,nd,\mathcal S_m(S_g))$ has a finite index subgroup which surjects onto 
$B(\pi_h, n, \mathcal S(S_h))$, and in particular it is not virtually prosolvable nor solvable-by-finite or finite-by-solvable.

\begin{remark}
If we had proven that  $d=1$ is convenient for all $m$  
then the family of finite quotients of $B(\pi_g,nd,\mathcal S_m(S_g))$ would provide a negative answer to  
Problem 4' from \cite{Z}. 
\end{remark}

\begin{remark}
It would be interesting to know whether the image of  
$\widehat{B(\pi_g,p,M)}\to \prod_{q\geq m(p)}\mathbb G_{p,(i)}(\Z_q)$ is 
open.  
\end{remark}

\begin{remark}
The  arithmetic group $\mathbb G_{p,(p-3)}(\Z)$, for $g\geq 3$ and  prime 
$p\geq 5$ has the congruence property.  
This follows from results of Tomanov (see \cite{Tom}, Main Thm. (a)) 
and Prasad and Rapinchuk (see \cite{PrR}, Thm. 2.(1) and Thm. 3)
on the congruence kernel for $\Q$-anisotropic algebraic groups of 
type $^2A_{n-1}$, with $n\geq 4$. 
Moreover, $\mathbb G_{p, (p-3)}(\Z)$ 
is cocompact in $\mathbb G_{p,(p-3)}(\R)$, since 
it is $\Q$-anisotropic, by a classical result of Borel and Harish-Chandra 
(see \cite{BH}).    
\end{remark}

\subsection{Proof of Corollary \ref{ubiquity}}
We have  the following sequence of inclusions:
\[\rho_{p,(p-3)}(\pi_g)\subset \rho_{p,(p-3)}(\Gamma)\subset \rho_{p,(p-3)}(\Gamma_g^1)\subset P\mathbb G_{g,p,(p-3)}(\Z)\]
Proposition \ref{finitequotients} shows that for $g\geq 2$ and large enough prime 
$p\equiv 3 ({\rm mod}\; 4)$  the reduction mod $q^k$  sends $\rho_{p,(p-3)}(\pi_g)$  
onto $P \mathbb G_{g,p,(p-3)}(\Z/q^k\Z)$, 
for all but finitely many primes $q$. Therefore,  all groups in the sequence above have the same image 
$P\mathbb G_{g,p,(p-3)}(\Z/q^k\Z)$ under the reduction mod $q^k$.

For infinitely many $q$ the groups 
$P\mathbb G_{g,p,(p-3)}(\Z/q\Z)$ are either of the form $PSL(N_{g,p}, \mathbf F_q)$ or $PSU(N_{g,p},\mathbf F_q)$, for some $N_{g,p}$ going to infinity with $p$. 
This gives the first assertion of Corollary \ref{ubiquity}.

Eventually, the fundamental group $\pi_1(M^3)$ of the 
fibered 3-manifold $M^3$ with monodromy $\varphi\in {\rm Aut}^+(\pi_g)$ is isomorphic to the semi-direct product 
$\pi_g\rtimes_{\varphi} \Z$, where the action of the generator of $\Z$ on $\pi_g$ is given by $\varphi$.   
Now, $\pi_1(M^3)$ embedds in $\Gamma_g^1$, since it is isomorphic to the 
preimage of the group $\langle \overline{\varphi}\rangle \subset \Gamma_g$ generated by 
the class $\overline{\varphi}\in \Gamma_g$ of $\varphi$, under the homomorphism $\Gamma_g^1\to \Gamma_g$. 
Then the claim follows from above.

\begin{remark}\label{faithful}
Quantum representations are asymptotically faithful (see \cite{A,FWW}); for a surface 
of genus $g\geq 2$ with one boundary component and an infinite set $A$ of odd numbers we have  
(see \cite{FWW}, Thm.3.3):
\[ \cap_{p\in A, i\in \mathcal C_p}\ker \rho_{g,p, (i)} =1\in \Gamma_g^1\]
It seems that (\cite{M}) the methods of \cite{MN,MC} could improve the asymptotic faithfulness above to the case where
we only consider a  single boundary color  for each $p$, namely that: 
\[ \cap_{p\in A} \ker \rho_{g,p,(f(p))}=1\in \Gamma_g^1, \; {\rm provided \; that} \; \lim_{p\to \infty}\frac{f(p)}{p}=1\]
Now, a non-trivial element of 
$P\mathbb G_{g,p,(p-3)}(\Z)$ can be detected by reduction modulo some prime $q$ belonging to any given infinite set of 
primes. Thus projections onto finite simple quotients $P \mathbb G_{g,p,(p-3)}(\Z/q\Z)$ could detect any 
non-trivial element of $\Gamma_g^1$, when $p$ and $q$ belong to  (any) infinite sets of primes. 
\end{remark}

\subsection{Proof of Theorem \ref{Lubotzky}}  
Consider  the  homomorphism $\Psi_{g,p,q}$ obtained by composing the 
projection on a simple factor, the reduction mod $q^k$  and $\rho_{p,(p-3)}$ as follows:
\[ \Gamma_g^1\to \rho_{p,(p-3)}(\Gamma_g^1)\subset P\mathbb G_{g,p,(p-3)}(\Z) \to 
   P\mathbb G_{g,p,(p-3)}(\Z/q\Z)\]
Recall from the first lines of the proof of Corollary \ref{ubiquity} that for any large prime $p\equiv 3 \; ({\rm mod} \; 4)$  
and large enough prime $q$  we have:
\[ \Psi_{g,p,q}(\pi_g)=\Psi_{g,p,q}(\Gamma_g^1)=P\mathbb G_{g,p,(p-3)}(\Z/q\Z)\]

We obtained therefore infinitely many surjective homomorphisms $\Psi:{\rm Aut}^+(\pi_g)\to F$ onto finite simple groups 
$F$ of Lie type, with the property that $\Psi(\pi_g)=F$ (we dropped the subscripts to simplify the notation). 
We now claim that $\ker\Psi|_{\pi_g}$ are characteristic subgroups, which will settle the result. 

Observe first that every subgroup $\ker\Psi|_{\pi_g}\subset \pi_g$ is  ${\rm Aut}^+(\pi_g)$-invariant because:
\[\Psi|_{\pi_g}(\varphi(x))=\Psi(\varphi x \varphi ^{-1})=1, {\rm for} \; x\in \ker\Psi|_{\pi_g}, \varphi\in {\rm Aut}^+(\pi_g)\]

We are almost done since  ${\rm Aut}^+(\pi_g)$ is an index 2 subgroup of  the group of all automorphisms 
${\rm Aut}(\pi_g)$. 
To proceed further, realize  $S_g$ in the Euclidean space as the double  along the boundary of $S_{\frac{g}{2},1}$, if $g$ is even and  
$S_{\frac{g-1}{2},2}$, when $g$ is odd, respectively. Let $\tau$ denote the Euclidean symmetry 
exchanging the two halves of $S_g$, so that $\tau$ is an involution reversing the orientation of $S_g$. 
We denote by the same letter the corresponding mapping class $\tau\in {\rm Aut}(\pi_g)$.  Further,  
$ {\rm Aut}(\pi_g)$ is generated by $ {\rm Aut}^+(\pi_g)$ and an arbitrary orientation reversing mapping class, in particular $\tau$.

We have now the following lemma whose proof is postponed a few lines: 
\begin{lemma}\label{reversing}
The homeomorphism $\tau$ induces an anti-linear map 
$\tau_*:W_{g,p,(i)}\to W_{g,p,(i)}$ which  coincides with the anti-linear involution $J$ induced by 
the complex conjugation of coordinates.  
\end{lemma}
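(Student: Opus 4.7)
The plan is to realize $\tau$ geometrically in the skein-theoretic model and read off its action on the $p$-admissible coloring basis. First I would arrange the handlebody structure so that $S_g \subset S^3$ is the equator of a round $3$-sphere and $\tau$ is induced by reflection in that equator; then $\tau$ interchanges the two complementary handlebodies $H_g$ and $\overline{H}_g = S^3 \setminus \mathrm{int}(H_g)$, reversing the ambient orientation. I would choose a uni-trivalent spine $G \subset H_g$ whose image $\tau(G) \subset \overline{H}_g$ is combinatorially isomorphic to $G$ via $\tau$, and use the associated $p$-admissible colorings as the basis of $W_{g,p,(i)}$ described in Section~\ref{tqft}.

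Next, I would invoke the classical fact from \cite{BHMV} that for a framed link $L$ the Kauffman bracket satisfies $\langle L \rangle_{A^{-1}} = \overline{\langle L \rangle_A}$ when $A$ lies on the unit circle, and that reversing the orientation of the ambient $3$-manifold has the same effect on the skein as replacing $A$ by $A^{-1}$. Together these imply that the map $\tau_* : S_A(H_g) \to S_A(\overline{H}_g)$, viewed as a map of complex vector spaces, is conjugate-linear, and descends to a conjugate-linear endomorphism of the space of conformal blocks via the identification $W_{g,p,(i)} \cong S_A(H_g)/\ker\langle\,,\,\rangle$. Since the Jones--Wenzl projectors are invariant under the reflection symmetry of the Temperley--Lieb algebra, the cabled element $G_{\mathbf c}$ is sent by $\tau$ to $\tau(G)_{\mathbf c}$, which lives in $\overline H_g$ but represents the same $p$-admissible label.

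Finally, I would match the two bases on the two sides of $S_g$: the pairing $\langle\,,\,\rangle$ factors through the disjoint union $G \sqcup \tau(G)$ in $S^3$, and this symmetric graph evaluates as a product of theta-invariants with real Kauffman-bracket values. Consequently the dual basis to $\{[G_{\mathbf c}]\}$ under $\langle\,,\,\rangle$ is proportional to $\{[\tau(G)_{\mathbf c}]\}$ by a positive real scalar, so in the basis $\{[G_{\mathbf c}]\}$ the endomorphism $\tau_*$ acts by the identity matrix applied conjugate-linearly to the coefficients, i.e.\ as $J$. The main obstacle is the careful bookkeeping of phases stemming from the distinction between $A$ and $\zeta=A^2$, from the central extension making $\ro_{p,\zeta}$ linear, and from the framing correction under orientation reversal; these a priori allow a scalar multiple of $J$. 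That scalar is pinned down to $+1$ by normalizing the basis so that the diagonal entries of $H_\zeta$ lie in the totally real field $\Q(\zeta+\overline{\zeta})$, as recalled in Subsection~\ref{tqft}, and checking compatibility with the unitary case $\zeta_p = A_p^2$ where $H_{\zeta_p}$ is positive definite and real.
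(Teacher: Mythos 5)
Your conjugation mechanism (mirror image sends $A\mapsto A^{-1}$, and $A^{-1}=\overline{A}$ on the unit circle, so the induced map is anti-linear after specializing $A$ to a $2p$-th root of unity) is exactly the one the paper uses, but your geometric set-up of $\tau$ is not the map the lemma is about, and this breaks your final step. The $\tau$ in question is the Euclidean reflection of the standardly embedded $S_g$ exchanging the two halves of the doubled subsurface; it reverses the orientation of $S_g$. Any extension of it to $S^3$ acting on links by mirror symmetry (i.e. reversing the ambient orientation)必 preserves the co-orientation of $S_g$, hence maps $H_g$ to $H_g$ and $\overline{H}_g$ to $\overline{H}_g$ -- a plane reflection does exactly this -- whereas an orientation-reversing involution of $S^3$ that interchanges the two handlebodies restricts to an orientation-\emph{preserving} homeomorphism of $S_g$ and therefore cannot induce $\tau$. (Also, no involution exchanging the handlebodies can fix the Heegaard surface pointwise for $g\geq 1$, since gluing $H_g$ to a mirror copy by the identity yields the double $\#_g(S^1\times S^2)\neq S^3$.) So "reflection in the equator interchanging $H_g$ and $\overline{H}_g$" is internally inconsistent with the map you must study.

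Even granting a handle-exchanging realization, the base-matching step is wrong: if $\tau(G)$ is a spine of $\overline{H}_g$, the pairing $\langle G_{\mathbf c},\tau(G)_{\mathbf c'}\rangle$ is of Hopf-link type (in genus one it is the $S$-matrix), not a diagonal product of theta-invariants, so the dual basis of $\{[G_{\mathbf c}]\}$ is \emph{not} proportional to $\{[\tau(G)_{\mathbf c}]\}$, and the resulting anti-linear map would be $J$ composed with a nontrivial $S$-type matrix rather than $J$ itself; your positivity/normalization argument cannot repair this because the premise (diagonality) fails. The paper's route avoids all of this: since the reflection preserves $H_g$, it induces a map $S_A(H_g)\to S_{A^{-1}}(H_g)$ directly from $\langle\tau(K)\rangle_A=\langle K\rangle_{A^{-1}}$, it carries $\ker\langle\,,\,\rangle_A$ to $\ker\langle\,,\,\rangle_{A^{-1}}$ compatibly with the sesquilinear pairing, and after the specialization $A\mapsto$ a $2p$-th root of unity it descends to complex conjugation of coordinates on $W_{g,p,(i)}$, with no dual-basis identification or phase bookkeeping required. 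I recommend redoing the argument with $\tau$ preserving $H_g$; the rest of your skein-theoretic input is then correct and essentially reproduces the paper's proof.
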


The group of homeomorphisms of $\Sigma_{g,1}$ which are identity on the boundary  
acts on the space of conformal blocks $W_{g,p,(i)}$, since their construction is functorial. This action  provides a representation of  ${\rm Aut}(\pi_g)$ into the general linear group of the real 
vector space underlying $W_{g,p,(i)}$ which extends $\rho_{p,\zeta,(i)}$. We keep the same notation $\rho_{p,\zeta,(i)}$
for this extension.

Now, Lemma \ref{reversing} gives us: 
\[ \rho_{p,\zeta,(i)}(\tau x\tau^{-1})=J\rho_{p,\zeta,(i)} (x)J = \rho_{p, \overline{\zeta},(i)}(x), \; {\rm for} \; x \in {\rm Aut}^+(\pi_g)\]   
Further, if we identify  $\tau(x)\in\pi_g$  with its image by the point pushing 
map $\pi_g \to \Gamma_g^1$ arising in the Birman exact sequence, we can write: 
\[ \tau(x)=\tau x \tau^{-1}, \; {\rm for } \; x\in \pi_g\]
We then obtain from above:
\[ \rho_{p, \zeta, (i)}(\tau(x))=\rho_{p,\zeta,(i)}(\tau x \tau^{-1})= J \rho_{p,\zeta, (i)}(x) J= \rho_{p,\overline{\zeta},(i)}(x), \; {\rm for} \; x\in \pi_g\]
Therefore, $\ker(\rho_{p,(i)}|_{\pi_g})$ is $\tau$-invariant and hence 
a characteristic subgroup of $\pi_g$.  

The complex conjugation $J$ induces an automorphism of $\mathbb G_{g,p,(i)}$ 
keeping invariant the lattice  $\mathbb G_{g,p,(i)}(\Z)$.  Then the two surjective homomorphisms $\Psi|_{\pi_g}$ and $\Psi|_{\pi_g}\circ \tau:\pi_g\to F$ are equivalent, so that $\ker(\Psi|_{\pi_g})$ is both ${\rm Aut}^+(\pi_g)$-invariant and $\tau$-invariant and hence a characteristic subgroup, as claimed. 
This proves Theorem \ref{Lubotzky}.  

\begin{proof}[Proof of Lemma \ref{reversing}]
The homeomorphism $\tau$ extends to $S^3$ and sends any link into its mirror image. 
Recall that the Kauffman bracket invariant $\langle \; \rangle_A$ at the parameter 
$A$ is well-behaved with respect to the mirror symmetry, namely we have:  
\[ \langle \tau(K) \rangle_A = \langle K \rangle _{A^{-1}}\]
Therefore $\tau$ induces a map at the level of skein modules of the handlebodies $H_g$ (and $\overline{H}_g$) 
still denoted $\tau:S_A(H_g)\to S_{A^{-1}}(H_g)$. 
According to the definition of the sesquilinear form $\langle , \rangle$ we have:  
\[ \langle \tau(x), \tau(y)\rangle_{A^{-1}} = \langle \tau(x\sqcup y)\rangle_{A^{-1}}=\langle x \sqcup y\rangle_{A^{-1}}\]
It follows that $x\in \ker \langle , \rangle _A$ if and only if $\tau(x)\in \ker \langle , \rangle _{A^{-1}}$.  
We obtain $W_{g,p,(i)}$ as the quotient by the kernel of $\langle , \rangle_{A}$ after specifying 
an embedding $\Z[A,A^{-1}]\to \C$  sending $A$ to a $2p$-th root of unity. It follows that $\tau$ induces 
the complex conjugation at the level of $W_{g,p,(i)}$. In other terms $\tau$ provides an isomorphism between 
the space of conformal blocks  $W_{g,p,(i)}$ associated to the surface $\Sigma_{g, (i)}$ and its dual 
$W_{g,p,(i)}^*$ which is associated to the surface $-\Sigma_{g, (i)}$ with the opposite orientation. 
 \end{proof}

\begin{remark}
More generally, kernels of unitary TQFT representations are characteristic subgroups of $\pi_g$. 
\end{remark}

\subsection{Generalized congruence quotients}
A {\em principal congruence subgroup} of $\Gamma_g^1$ is the kernel of the homomorphism 
${\rm Aut}^+(\pi_g)\to {\rm Aut}(F)$ induced by some surjective homomorphism $\pi_g\to F$ onto 
a finite characteristic quotient $F$ of $\pi_g$. We actually only need an ${\rm Aut}^+(\pi_g)$-invariant quotient $F$. 
The image of  ${\rm Aut}^+(\pi_g)$ within ${\rm Aut}(F)$ is called a {\em principal congruence quotient}. 
This construction naturally extends to all characteristic quotients $F$ of $\pi_g$, not necessarily finite ones; we will call them 
{\em generalized principal congruence subgroups} and {\em quotients} respectively.

\begin{proposition}\label{congruence}
If $\rho_{p,(i)}|_{\pi_g}:\pi_g\to P\mathbb G_{g,p,(i)}$ is Zariski dense then 
the mapping class group representation $\rho_{p,(i)}$ is equivalent to a generalized principal 
congruence representation. In particular, this holds when $i=p-3$. 
\end{proposition}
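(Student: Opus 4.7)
The plan is to build the characteristic quotient directly out of the representation and then use Zariski density to identify the two representations. Let $K=\ker(\rho_{p,(i)}|_{\pi_g})$ and set $F=\pi_g/K$, which is canonically identified with $\rho_{p,(i)}(\pi_g)\subset P\mathbb G_{g,p,(i)}(\R)$. First I would check that $K$ is a characteristic subgroup of $\pi_g$. Since $\pi_g$ is normal in $\Gamma_g^1={\rm Aut}^+(\pi_g)$, conjugation by any $\varphi\in\Gamma_g^1$ sends $K$ to itself, so $K$ is ${\rm Aut}^+(\pi_g)$-invariant. For the orientation-reversing generator $\tau$ one reuses verbatim the argument from the proof of Theorem \ref{Lubotzky}: by Lemma \ref{reversing} the involution $\tau$ acts on $W_{g,p,(i)}$ as the complex conjugation $J$, and hence $\rho_{p,\zeta,(i)}(\tau(x))=J\rho_{p,\zeta,(i)}(x)J=\rho_{p,\overline\zeta,(i)}(x)$ for $x\in\pi_g$. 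In particular $\tau(K)=K$, so together with ${\rm Aut}^+(\pi_g)$-invariance we conclude that $K$ is invariant under all of ${\rm Aut}(\pi_g)$. Thus $F=\pi_g/K$ is an (a priori infinite) characteristic quotient of $\pi_g$, and the resulting homomorphism $\Psi:\Gamma_g^1={\rm Aut}^+(\pi_g)\to{\rm Aut}(F)$ is a generalized principal congruence representation in the sense of the definition.

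Next I would show that $\rho_{p,(i)}$ and $\Psi$ have the same kernel, which is the precise meaning of the claimed equivalence. The image $\rho_{p,(i)}(\Gamma_g^1)$ normalizes $F=\rho_{p,(i)}(\pi_g)$ inside $P\mathbb G_{g,p,(i)}(\R)$, so $\rho_{p,(i)}$ factors as
\[
\Gamma_g^1\;\longrightarrow\; N_{P\mathbb G_{g,p,(i)}(\R)}(F)\;\longrightarrow\;{\rm Aut}(F),
\]
where the second arrow is given by inner automorphisms of $P\mathbb G_{g,p,(i)}(\R)$. The composition agrees with $\Psi$ by construction, so it suffices to show that the second arrow is injective, i.e.\ that the centralizer $Z_{P\mathbb G_{g,p,(i)}(\R)}(F)$ is trivial. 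Any element centralizing $F$ centralizes its Zariski closure; the hypothesis of the proposition asserts that this Zariski closure is all of $P\mathbb G_{g,p,(i)}(\R)$. Since $P\mathbb G_{g,p,(i)}$ is the projective (adjoint) form of a semi-simple group, its center is trivial, so the centralizer of $F$ is trivial. Consequently $\ker\rho_{p,(i)}=\ker\Psi$ and $\rho_{p,(i)}$ is equivalent to the generalized principal congruence representation $\Psi$.

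Finally, the case $i=p-3$ follows because Theorem \ref{Zdensity} provides exactly the Zariski density hypothesis needed (after passing from $\widetilde{\pi_g}$ to $\pi_g$, i.e.\ dividing by the $\Z^2$ subgroup which maps to scalars and hence to the identity in $P\mathbb G_{g,p,(p-3)}$).

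The main subtlety is the $\tau$-invariance of $K$: without it one would only get an ${\rm Aut}^+(\pi_g)$-invariant quotient, and the invocation of Lemma \ref{reversing} together with the mirror-symmetry identity for the Kauffman bracket is the real content. The triviality of the centralizer is essentially automatic once one has Zariski density into the projective group, and the rest is diagram chasing.
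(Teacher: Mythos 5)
Your proposal is correct and follows essentially the same route as the paper: you identify $\rho_{p,(i)}(\pi_g)$ (equivalently $\pi_g/\ker\rho_{p,(i)}|_{\pi_g}$) as a characteristic quotient by reusing the $\tau$-invariance argument via Lemma \ref{reversing}, and you prove the injectivity of the induced map to ${\rm Aut}(\rho_{p,(i)}(\pi_g))$ by observing that an element commuting with a Zariski dense subgroup of the adjoint group $P\mathbb G_{g,p,(i)}$ lies in its (trivial) center — exactly the paper's argument, phrased via the conjugation homomorphism $\Phi$. Your write-up is somewhat more explicit about the factorization through the normalizer and about what ``equivalent'' means, but the content is the same.
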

\begin{proof}
The action by conjugacy of $\rho_{p,(i)}(\Gamma_g^1)$ on $\rho_{p,(i)}(\pi_g)$ provides a homomorphism: 
\[ \Phi: \rho_{p,(i)}(\Gamma_g^1)\to {\rm Aut}(\rho_{p,(i)}(\pi_g))\]
In the proof of Lemma \ref{reversing} we noted that $\rho_{p,(i)}(\pi_g)$ are characteristic quotients of $\pi_g$. 
It remains to prove that the homomorphism $\Phi$ is injective. Let $\varphi\in \Gamma_g^1$ such that 
$\rho_{p,(i)}(\varphi)\in \ker \Phi$. This is equivalent to:
\[ \rho_{p,(i)}(\varphi)\rho_{p,(i)}(x)\rho_{p,(i)}(\varphi)^{-1}=\rho_{p,(i)}(x), \; {\rm for \; all} \; x\in \pi_g\]
Since $\rho_{p,(i)}(\pi_g)$ is Zariski dense in $P\mathbb G_{g,p,(i)}$ this implies that 
$\rho_{p,(i)}(\varphi)$ is identity. This proves the claim. 
\end{proof}
\begin{remark}
The same proof works under the weaker assumption that  $Ad\circ \rho_{p,(i)}|_{\pi_g}$ is irreducible (see \cite{Sik}, Prop.14). 
Moreover, if  we suppose that $\rho_{p,(i)}|_{\pi_g}$ is irreducible, then the homomorphism $\Phi$ has a finite abelian kernel 
(see \cite{Sik}, Prop.15).   
\end{remark}
\begin{proposition}\label{finite-congruence}
The finite quotients  $P\mathbb G_{g,p,(p-3)}(\Z/q^k\Z)$  of $\Gamma_g^1$ obtained in Proposition \ref{finitequotients}
are principal congruence quotients. 
\end{proposition}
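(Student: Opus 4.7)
The plan is to explicitly exhibit the characteristic quotient $F$ of $\pi_g$ whose associated principal congruence quotient of $\Gamma_g^1$ coincides with $P\mathbb{G}_{g,p,(p-3)}(\Z/q^k\Z)$, and then verify the two defining properties: that $F$ is indeed $\mathrm{Aut}(\pi_g)$-invariant, and that the image of $\Gamma_g^1$ in $\mathrm{Aut}(F)$ matches the proposed finite group.

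First, set $K_{q^k} := \ker\bigl(\pi_g \xrightarrow{\rho_{p,(p-3)}} \rho_{p,(p-3)}(\pi_g) \to \rho_{p,(p-3)}(\pi_g) \bmod q^k\bigr)$ and let $F_{q^k} := \pi_g/K_{q^k}$. By Proposition \ref{finitequotients}, $F_{q^k}$ is canonically identified with $P\mathbb{G}_{g,p,(p-3)}(\Z/q^k\Z)$ (modulo the few excluded primes).

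Next, I would verify that $K_{q^k}$ is a characteristic subgroup of $\pi_g$. The argument mirrors the proof of Theorem \ref{Lubotzky}: $\mathrm{Aut}^+(\pi_g)$-invariance is automatic, since $\pi_g\triangleleft\Gamma_g^1$ and the action of $\Gamma_g^1$ on $\pi_g$ translates via $\rho_{p,(p-3)}$ into conjugation inside the integral group $P\mathbb{G}_{g,p,(p-3)}(\Z)$, which commutes with mod $q^k$ reduction. For the orientation-reversing involution $\tau$, Lemma \ref{reversing} shows that the induced action on coordinates is the complex conjugation $J$; since $J$ is defined on $P\mathbb{G}_{g,p,(p-3)}(\Z)$ and preserves the integral structure, it also commutes with the reduction mod $q^k$. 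Together these yield $\mathrm{Aut}(\pi_g)$-invariance of $K_{q^k}$.

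Finally, I would identify the principal congruence quotient. The map $\Gamma_g^1 = \mathrm{Aut}^+(\pi_g) \to \mathrm{Aut}(F_{q^k})$ induced by the conjugation action on $\pi_g$ factors through the reduction mod $q^k$ of the composition $\Gamma_g^1 \xrightarrow{\rho_{p,(p-3)}} P\mathbb{G}_{g,p,(p-3)}(\Z)$ and realizes each element as an inner automorphism of the group $P\mathbb{G}_{g,p,(p-3)}(\Z/q^k\Z) = F_{q^k}$. By Proposition \ref{finitequotients}, the image of $\Gamma_g^1$ mod $q^k$ is exactly $P\mathbb{G}_{g,p,(p-3)}(\Z/q^k\Z)$, so the image in $\mathrm{Aut}(F_{q^k})$ is $F_{q^k}/Z(F_{q^k})$. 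The main technical step is then to observe that $Z(P\mathbb{G}_{g,p,(p-3)}(\Z/q^k\Z))$ is trivial for all but finitely many $q$: this follows because the algebraic group $P\mathbb{G}_{g,p,(p-3)}$ is centerless (being a projective/adjoint form), so its $\Z/q^k\Z$-points have trivial center once $q$ avoids the finitely many bad primes already excluded in Proposition \ref{finitequotients}. Hence the image coincides with $F_{q^k} \cong P\mathbb{G}_{g,p,(p-3)}(\Z/q^k\Z)$, which is the defining property of a principal congruence quotient. The only real obstacle is the centerless check, but this is standard for the adjoint forms appearing here.
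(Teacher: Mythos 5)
Your argument is correct and takes essentially the same route as the paper: there $F$ is likewise the mod $q^k$ reduction of $\rho_{p,(p-3)}(\pi_g)$, which is a characteristic quotient by the proof of Theorem \ref{Lubotzky}, the conjugation action of $\Gamma_g^1$ is the map $\Phi$ of Proposition \ref{congruence}, and the image in ${\rm Aut}(F)$ is identified with $P\mathbb G_{g,p,(p-3)}(\Z/q^k\Z)$ by triviality of the center. The only (cosmetic) difference is that the paper deduces this center-freeness from that of the finite simple groups $P\mathbb G_{g,p,(p-3)}(\Z/q\Z)$, whereas you appeal to the adjoint form of the algebraic group over $\Z/q^k\Z$.
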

\begin{proof}
The proof of Theorem \ref{Lubotzky} and Proposition \ref{congruence} above show 
that the image of $\Gamma_g^1$ acts by conjugacy on the finite characteristic quotient of $\pi_g$ 
obtained by reduction mod $q^k$ of $\rho_{p,(p-3)}(\pi_g)$. 
The reduction mod $q^k$ of the map 
$\Phi$ above is still injective since  $P\mathbb G_{g,p,(p-3)}(\Z/q\Z)$ are center-free. 
\end{proof}

\subsection{Comments}
The statement of Theorem \ref{Lubotzky} had several other reformulations which were discussed in (\cite{Lub}, 6.4).  
Conjecture 6.12 from \cite{Lub} claims that for all finite dimensional linear representations of the group 
${\rm Aut}(\mathbb F_n)$ of automorphisms of the free group $\mathbb F_n$ on $n\geq 3$ generators 
the image of the subgroup $\mathbb F_n$ of inner automorphisms is virtually solvable. 
Proposition \ref{density} above shows that a similar statement cannot hold when the free group $\mathbb F_n$ is replaced by 
a surface group $\pi_g$, $g\geq 2$.  This already follows from (\cite{KS}, Cor. 4.2).  Although representations $\ro_{g,p,(i)}$ arising here are all projective representations, they can be easily converted into linear 
representations with the same properties by considering  the tensor product $\ro_{g,p,(i)}\otimes \ro_{g,p,(i)}^*$ with their respective dual representations.  
This conjecture is both related to the non-linearity of ${\rm Aut}(\mathbb F_n)$, for $n\geq 3$ following Formanek and Procesi 
and the Weigold conjecture. The later states that ${\rm Aut}(\mathbb F_n)$ acts transitively on the set of kernels 
of surjective homomorphisms $\mathbb F_n\to G$, for any finite simple group $G$ (see \cite{Lub}) and it is known to hold for large enough $n$ in terms of $G$.  

In (\cite{Lub}, section 10) the authors discussed similar questions for a surface group. 
Any homomorphism  $\pi_g\to G$ defines a class in $H_2(G)$ which is the image of the fundamental class of $H_2(\pi_g)$. 
This class is left invariant by the left composition with automorphisms from ${\rm Aut}(\pi_g)$ and its image 
in $H_2(G)/{\rm Out}(G)$ is also invariant by the right composition with automorphisms from ${\rm Aut}(G)$. 
The extended Weigold conjecture asks whether ${\rm Aut}(\pi_g)$ (or just ${\rm Aut}^+(\pi_g)$)
acts transitively on the set of kernels of surjective homomorphisms  $\pi_g\to G$  
corresponding to a given class in $H_2(G)/{\rm Out}(G)$, provided $G$ is a finite simple group and $g\geq 3$. 
This was proved to hold for large enough $g$, depending on $G$ in \cite{DT}.

The existence of characteristic finite simple quotients of $\pi_g$, $g\geq 2$ from Theorem \ref{Lubotzky} and the 
discussion in (\cite{Lub}, 6.4) also show that this  analog of the Weigold conjecture for surface groups does not hold. 
We choose $p$ such that $N_{g,p}$ are odd and then primes $q$ such that $q-1$ is coprime to $N_{g,p}$. Then $H_2(PSL(N_{g,p}, \mathbf F_q))=1$, so that there is only one class of homomorphisms. One knows from \cite{DT} 
that there exists a large orbit of ${\rm Aut}(\pi_g)$ where this group acts as an alternating group. 
On the other hand for infinitely many  finite quotients  of $\pi_g$ of the form 
$PSL(N_{g,p}, \mathbf F_q)$ the action of ${\rm Aut}(\pi_g)$ is trivial. Thus for large enough $q$ there exist several 
${\rm Aut}(\pi_g)$-orbits (in the same class). 

Another problem stated in (\cite{Lub}, 6.4) for the free group is whether, for a Chevalley group scheme $F$ 
and surjective homomorphisms $\Psi: \pi_g\to F(\mathbf F_q)$ the 
number of conjugacy classes in the image $\Psi(\mathcal S(S_g))$ 
of primitive elements (i.e. simple closed curves) is unbounded as $q\to \infty$. 
For those $\Psi$ encountered in the proof of Theorem \ref{Lubotzky}
the image of $\Gamma_g^1$ is still $F(\mathbf F_q)$ and hence there are at most $\left[\frac{g}{2}\right]+1$ conjugacy classes for all $q$.

{
\small      
      
\bibliographystyle{plain}

}

\end{document}